\newtheorem{thm}{Theorem}[section]
\newtheorem{lem}[thm]{Lemma}
\newtheorem{prop}[thm]{Proposition}
\theoremstyle{definition}
\theoremstyle{remark}
\newtheorem{remark}{Remark}
\newcommand{\h}{h} %meshsize function
\newcommand{\hE}{\h_\E} %diameter of polygon E
\newcommand{\hEi}{\h_{\Ei}} %diameter of polygon E_i
\newcommand{\hT}{\h_\T} %diameter of triangle T
\newcommand{\he}{\h_\e} %length edge
\renewcommand{\u}{u} % continuous solution
\newcommand{\vv}{v} % function
\newcommand{\vn}{\vv_n} % virtual test function
\newcommand{\w}{w} % another function
\newcommand{\f}{f} %right-hand side
\newcommand{\fn}{\f_n} %discrete right-hand side
\newcommand{\un}{\u_n} %discrete solution
\newcommand{\Pinabla}{\Pi^{\nabla}_{\pE}} %energy projector
\newcommand{\Pinablaa}{\Pi^{\nabla}} %global energy projector
\newcommand{\PinablapuE}{\Pi^{\nabla}_{\pE}} %energy projector
\newcommand{\PinablapuEi}{\Pi^{\nabla}_{\pEi}} %energy projector degree \pEi
\newcommand{\Pinablai}{\Pi^{\nabla}_{\p_{\E_i}}} %energy projector on \E_i
\newcommand{\PinablaE}{\Pi^{\nabla,\E}_{\pE}} %energy projector with explicit dependence on \E
\newcommand{\Pizpmd}{\Pi^0_{\pE-2}} %L2 projector
\newcommand{\PizpmduE}{\Pi^0_{\pE-2}} %L2 projector
\newcommand{\PizpmdE}{\Pi^{0,\E}_{\pE-2}} %L2 projector with explicit dependence on \E
\newcommand{\e}{s} % generic edge
\newcommand{\etilde}{\widetilde \e} % generic edge
\newcommand{\E}{K} % generic element
\newcommand{\Ei}{\E_i} % generic element
\newcommand{\ES}{\E_S} % son element
\newcommand{\Etilde}{\widetilde \E} % generic element tilde
\newcommand{\V}{V} % continuous Sobolev space
\newcommand{\Vn}{\V_n} %virtual space
\newcommand{\VnE}{\Vn(\E)} %local virtual space
\newcommand{\chin}{\chi_n} %other function in the discrete space
\renewcommand{\a}{a} %continuous bilinear form
\newcommand{\aE}{a^\E} %local continuous bilinear form
\newcommand{\aEp}{a^{\E'}} %local continuous bilinear form on polygon \E'
\newcommand{\aEi}{a^{\E_i}} %local continuous bilinear form on polygon \E_1
\newcommand{\an}{\a_n} %discrete bilinear form
\newcommand{\anE}{\an^\E} %local discrete bilinear form
\newcommand{\taun}{\mathcal T_n} % polygonal decomposition
\newcommand{\tautilden}{\widetilde{\mathcal T}_n} % triangular subdecomposition
\newcommand{\En}{\mathcal E_n} %skeleton of the decomposition
\newcommand{\EnI}{\mathcal E_n^I} %skeleton of the decomposition
\newcommand{\lllbracket}{\left \llbracket} % big llbracket
\newcommand{\rrrbracket}{\right \rrbracket} % big rrbracket
\newcommand{\n}{\mathbf n} %normal versor
\newcommand{\RE}{R_\E} % internal residual
\newcommand{\REp}{R_{\E'}} % internal residual on polygon \E'
\newcommand{\REi}{R_{\E_i}} % internal residual on polygon \E_i
\renewcommand{\Re}{R_\e} % edge residual
\newcommand{\Rebar}{\overline {\Re}} % extension edge residual
\newcommand{\p}{p} % generic polynomial degree
\newcommand{\pE}{\p_\E} % polynomial degree on \E
\newcommand{\pEi}{\p_{\Ei}} % polynomial degree on \Ei
\newcommand{\omegae}{\omega_\e} % patch around \e
\renewcommand{\S}{S} %stabilization
\newcommand{\SE}{\S^\E} % local stabilization
\newcommand{\SEi}{\S^{\E_i}} % local stabilization on polygon \E_i
\newcommand{\Clement}{Cl\'ement{}} %Clement with accent
\newcommand{\T}{T} %generic triangle
\newcommand{\pT}{\p_\T} % polynomial degree on the triangle \T
\newcommand{\pe}{\p_\e} %degree on an edge
\newcommand{\VERT}{\mathbf V} % vertex in the decomposition
\newcommand{\pV}{\p_\VERT} %polynomial degree associated with a vertex
\newcommand{\ptildebold}{\mathbf p} %vector of degrees of accuracy subtriangulation
\newcommand{\uI}{\u_I} %virtual interpolant
\newcommand{\uM}{\u_M} % Melenk interpolant
\newcommand{\q}{q} % generic polynomial
\newcommand{\omegaE}{\omega_\E} % auxiliary patch around polygon E
\newcommand{\err}{e} %error of the method
\newcommand{\errI}{\err_I} %virtual Clement interpolant of the error
\newcommand{\xE}{\mathbf x _\E}
\newcommand{\boldalpha}{\boldsymbol{\alpha}} % alpha bold
\newcommand{\m}{m} % polynomial basis
\newcommand{\Vbold}{\VERT} % generic vertex
\newcommand{\wtilde}{\widetilde w} % w tilde
\newcommand{\utildeI}{\widetilde \u _I} % u tilde _I
\newcommand{\upi}{\u_\pi} % best polynomial approximant on polygon
\newcommand{\psiII}{\psi_{II}} % another virtual interpolant of \psi
\newcommand{\aposteriori}{a posteriori } % aposteriori
\newcommand{\bE}{\psi_\E} % again, the piecewise bubble function on polygon \E
\newcommand{\be}{\psi_\e} % again, the piecewise bubble function on edge \e
\newcommand{\etapredpE}{\eta_{\text{pred},\E,n}} % prediction-estimator
\newcommand{\etapredpEi}{\eta_{\text{pred},\Ei,n}} % prediction-estimator
\newcommand{\etabarn}{\overline\eta_a} % etabar n
\newcommand{\zetaE}{\zeta_{\pE}} % stability contribution in the error estimator
\newcommand{\rhoE}{\rho_{\pE}} % right-hand side contribution in the error estimator
\newcommand{\g}{g} % non-homogeneous Dirichlet boundary condition
\title{A posteriori error estimation and adaptivity in $hp$ virtual elements}
\date{}
\author{\normalsize{
L. Beir\~ao da Veiga\footnote{Dip. di Matematica e Applicazioni,  Universit\`a degli Studi di Milano-Bicocca, Via R. Cozzi, 55 - 20125 Milano, Italy, E-mail: {\tt lourenco.beirao@unimib.it}} $\,\,$,
M. Manzini \footnote{Group T-5, Theoretical Division, Los Alamos National Laboratory, Los Alamos, New Mexico - 87545, USA, E-mail: {\tt gmanzini@lanl.gov}},$\,\,$
L. Mascotto
\footnote{Faculty of Mathematics, University of Vienna, Oskar Morgenstern Platz 1, 1060, Wien, Austria, E-mail: {\tt lorenzo.mascotto@univie.ac.at}}
}
}
\begin{document}
\maketitle

\begin{abstract}
An explicit and computable error estimator for the $\h\p$ version of the virtual element method (VEM), together with lower and upper bounds with respect to the exact energy error, is presented.
Such error estimator is employed to provide, following the approach of \cite{MelenkWohlmuth_hpFEMaposteriori}, $\h\p$ adaptive mesh refinements for very general polygonal meshes.
In addition, a novel VEM $\h\p$ \Clement\,quasi-interpolant, instrumental for the a posteriori error analysis, is introduced. The performances of the adaptive method are validated by a number of numerical experiments.

\medskip\noindent
\textbf{AMS subject classification}: 65N12, 65N30
	
\medskip\noindent
\textbf{Keywords}: virtual element method, $\h\p$ Galerkin methods, a posteriori error analysis, polygonal meshes, mesh adaptivity

\end{abstract}
\emph{The copyright of the original paper is owned by Springer. The original publication (DOI 10.1007/s00211-019-01054-6)  appears on the journal
Numerische Mathematik and can be found at \url{https://link.springer.com/article/10.1007/s00211-019-01054-6}.}

%%%%%%%%%%%%%%%%%%%%%%%%%%%%%%%%%%%%%%%%%%%%%%%%%%%%%%%%%%%%%%%%%%%%%%%%%%%
% ----------------------------------------------------------------------------------------------------------------------------------------------------------------------------------------------------------
%%%%%%%%%%%%%%%%%%%%%%%%%%%%%%%%%%%%%%%%%%%%%%%%%%%%%%%%%%%%%%%%%%%%%%%%%%%
\section{Introduction} \label{section introduction}
Among the novel technologies developed in the last two decades, polygonal methods \cite{VEMvolley, cockburn_HDG, dipietroErn_hho, SukumarTabarraeipolygonalintroduction, BLM_MFD, talischi2010polygonal, lipnikov2014mimetic, Weisser_basic}
received an increasing attention, thanks to the advantages that they entail compared to methods based on standard simplicial/quadrilateral meshes. 
One of the appealing features of polygonal methods is that they dovetail particularly well with adaptive mesh refinements, since polygonal meshes do not require any post-processing within the adaptive procedure.

In the \emph{mare magnum} of the various theoretical and practical applications of mesh adaptivity, a particular place is occupied by the $\h\p$ a posteriori analysis in Galerkin methods and in particular in finite element method (FEM)
\cite{MelenkWohlmuth_hpFEMaposteriori, ainsworth1992procedure, dolejsi2016hp, canuto2017convergence, HoustonSuli2005note, hpDGaposTime}.
Here, one combines mesh refinements with the increase of the dimension of the local approximation spaces.

This results in particularly competitive methods able to capture and to solve in an efficient way the singular behaviour of solutions to partial differential equations (PDEs), but comes at the price of a more involved framework.
For instance, one has to face with the problem of deciding whether to refine in $\h$ (mesh refinement) or in $\p$ (increasing the dimension of local space),
has to construct an error estimator with explicit bounds in terms of the distribution of local polynomial degrees, and has to construct spaces possibly tailored for high polynomial order (such that, e.g., the condition number of the stiffness matrix is sufficiently robust in terms of $\p$).

Amidst the polygonal methods, we pinpoint the virtual element method (in short VEM, introduced in \cite{VEMvolley,hitchhikersguideVEM}), which has established itself as one of the most successful technologies in this context and enjoyed an increasing community
(a very limited list of papers being \cite{VEM_DD_basic, BLV_StokesVEMdivergencefree, gain2014virtual, nonconformingVEMbasic, Helmholtz-VEM, vaccaBrink, AMV2018fully,Benedetto-VEM-2, Wriggers-contact, brennerVEM, Gatica-1, VEMchileans}).
Differently from other polygonal approaches, the VEM is built without an explicit knowledge of the functions in the approximation spaces.
The construction of the method is based on two main ingredients,
which are computable only via the degrees of freedom with whom the spaces are endowed:
projections into polynomial spaces and particular bilinear forms stabilizing the method.
Virtual elements are particularly suitable for the $hp$ framework not only due to the mesh refinement flexibility, but also due to the fact that variable (from element to element) polynomial degrees can be included very naturally in the formulation, without resorting to``ad hoc'' modifications as in finite elements.

The a posteriori version of VEM was investigated in \cite{cangianigeorgulispryersutton_VEMaposteriori, MoraRiveraRodriguez_aposSteklov, berrone2017residual, ManziniBeirao_VEMresidualaposteriori, mixedVEMapos}
but the $\h\p$ adaptivity has never been targeted before and is the topic of the present work.
This paper falls in a series of articles covering different aspects of the $\p$ and $\h\p$ version of VEM, namely a priori error analysis on quasi-uniform meshes \cite{hpVEMbasic}, approximation of corner singularities \cite{hpVEMcorner},
 multigrid algorithms \cite{pVEMmultigrid}, ill-conditioning in two \cite{fetishVEM} and three dimensional problems \cite{fetishVEM3D}, Trefftz and non-conforming approaches \cite{conformingHarmonicVEM, ncHVEM},
 theoretical and numerical analysis of the stabilization typical of VEM \cite{MascottoPhDthesis}.

In the present work, we introduce a VEM \emph{computable} error estimator for a two dimensional model problem and we prove lower and upper bounds with respect to the energy
error that are explicit both in the mesh size and in the distribution of local degrees of accuracy (which are the VEM counterpart of polynomial degrees for FEM).
Additionally, we recall from \cite{MelenkWohlmuth_hpFEMaposteriori, MascottoPhDthesis} some $\h\p$ inverse inequalities on triangles and polygons.
Besides, we introduce and prove error estimates of a novel $\h\p$ \Clement\,type quasi-interpolant, which is constructed starting from the quasi-interpolant of \cite{melenk2003hp} on subtriangulations of the underlying polygonal meshes.
Such estimates are an interesting result on their own, independently of their application in the a posteriori error analysis.

We also present many numerical experiments; more precisely, after verifying the lower and upper bounds of the error estimator, we recall from \cite{MelenkWohlmuth_hpFEMaposteriori}
an $\h\p$ refinement strategy and we apply it to the proposed adaptive scheme.
Such refinement strategy heuristically suggests which are the marked elements where the solutions is supposed to be ``smooth'' or ``singular'';
in the former case, $\p$ refinement are effectuated (since $\p$ VEM converges exponentially when approximating analytic solutions, see \cite{hpVEMbasic}),
in the latter, $\h$ refinement are performed (geometric refinements towards the singularities also give exponential convergence in terms of the number of degrees of freedom, see \cite{hpVEMcorner}).

Possible further developments of this work include extension to more complex problems and to the three dimensional case. Moreover, one could also take into account in the adaptive strategy an $\h\p$ derefinement process including the agglomeration of elements.

\medskip
The outline of the paper is the following.
We begin in Section \ref{section model problem} by presenting the model problem and the $\h\p$ version of the virtual element method and next,
in Section \ref{section technical tools}, we introduce a set of technical tools needed for the a posteriori error analysis; in particular, we discuss about approximation by functions in the virtual element space,
$\h\p$ polynomial inverse estimates and extension operators from an edge into the interior of a triangle.
Successively, in Section \ref{section a posteriori error analysis} we deal with the a posteriori error analysis:
we introduce an error estimator and we assert the ``reliability'' and the ``efficiency'' of such an estimator.
Finally, a number of numerical experiments, including a comparison with the pure $\h$ refinement strategy, are presented in Section~\ref{section numerical results}.
%Furthermore, a comparison with the results in $\h\p$ FEM and $\h\p$ VEM a posteriori error analysis are presented.

\paragraph*{Notation}
In the remainder of the paper, we will employ the following notation.
Given $D$ a measurable open set in $\mathbb R^2$, we denote by $L^2(D)$ and $H^s(D)$, $s \in \mathbb N$, the standard Lebesgue and Sobolev spaces endowed with the standard inner products and seminorms:
\[
(\cdot, \cdot)_{s,D};\quad \quad \vert \cdot \vert_{s,D}.
\]
The Sobolev norms are denoted by:
\[
\Vert \cdot \Vert^2_{s,D} = \sum_{k=0}^s \vert \cdot \vert^2_{k,D}.
\]
Sobolev spaces with fractional order are defined via interpolation theory \cite{Triebel}.

It is worth to highlight with a separate notation the $H^1$ inner product over domain $D$:
\[
\a^D (\cdot, \cdot) := (\cdot, \cdot)_{1,D}.
\]
We refer to \cite{adamsfournier} for the definition of spaces, inner products and (semi)norms.

Given $\ell \in \mathbb N$, we set $\mathbb P_\ell(D)$ to be the space of polynomials of degree $\ell$ over $D$.
Finally, we write $a \lesssim b$ and $a \approx b$ in lieu of: there exist positive constants $c_1$, $c_2$ and $c_3$ independent of the polynomial degree and of the mesh-size such that $a \le c_1 b$ and $c_2 a \le b \le c_3 a$, respectively.

%%%%%%%%%%%%%%%%%%%%%%%%%%%%%%%%%%%%%%%%%%%%%%%%%%%%%%%%%%%%%%%%%%%%%%%%%%%
% ----------------------------------------------------------------------------------------------------------------------------------------------------------------------------------------------------------
%%%%%%%%%%%%%%%%%%%%%%%%%%%%%%%%%%%%%%%%%%%%%%%%%%%%%%%%%%%%%%%%%%%%%%%%%%%
\section{Model problem and the $\h\p$ version of the virtual element method} \label{section model problem}
In this section, we briefly review the model problem and the $\h\p$ version of the virtual element method.

Given $\Omega \subset \mathbb R^2$ a polygonal domain and $\f \in L^2(\Omega)$, we consider the Poisson problem with homogeneous Dirichlet boundary conditions
\begin{equation} \label{strong Poisson problem}
\begin{cases}
-\Delta  \u = \f & \text{in } \Omega\\
\u = 0 & \text{on } \partial \Omega\\
\end{cases}
\end{equation}
and its weak formulation
\begin{equation} \label{Poisson problem homogeneous Dirichlet weak formulation}
\begin{cases}
\text{find } \u \in \V \text{ such that}\\
\a(\u,\vv) = (\f, \vv)_{0,\Omega} \quad \forall \vv \in \V,
\end{cases}
\end{equation}
where we have set
\[
\V = H_0^1 (\Omega),\quad \a(\u, \vv) = \a^\Omega (\u, \vv) = \int _\Omega \nabla \u \cdot \nabla \vv\quad \forall \u,\,\vv \in \V.
\]
Next, we introduce a virtual element method based on polygonal meshes and with nonuniform local degree of accuracy for the approximation of problem \eqref{Poisson problem homogeneous Dirichlet weak formulation}.
We note that virtual element methods with nonuniform degree of accuracy were firstly introduced in \cite{hpVEMcorner}.

Given $\{\taun\}_{n\in \mathbb N}$ a sequence of decomposition  of $\Omega$ into polygons with straight edges, we associate to each $\taun$, $n \in \mathbb N$, its set of vertices $\mathcal V _n$ and boundary vertices $\mathcal V_n^b$,
its set of edges $\mathcal E_n$ and boundary edges $\mathcal E_n^b$.
To each $\E$ in $\taun$, we associate its diameter $\h_\E$, its barycenter $\xE$, its set of vertices $\mathcal V^\E$ and its set of edges $\mathcal E^\E$.
To each edge $\e \in \mathcal E_n^b$, we associate once and for all a unit normal $\n_{\mathbf \e} = \n$.

We require that $\taun$ is a conforming polygonal decomposition of $\Omega$ for every $n \in \mathbb N$, i.e.
%every boundary edge $\e \in \mathcal E_n^b$ must belong to one element, whereas every internal edge $\e \in \mathcal E_n \setminus \mathcal E_n^b$ belongs to two elements.
%the intersection between two elements must be an edge, a vertex or the empty set.
every internal edge belongs to the intersection of the boundary of two polygons.

Besides, we demand the two following geometric assumptions on sequence $\taun$.
\begin{itemize}
\item[(\textbf{D1})] Every $\E \in \taun$ is star-shaped with respect to a ball (see \cite{BrennerScott}) with radius greater than or equal to $\gamma \h_\E$, where $\gamma$ is a universal positive constant.
\item[(\textbf{D2})] Given $\E \in \taun$, each of his edges $\e \in \mathcal E^\E$ has length greater than or equal to $\widetilde \gamma \hE$, where $\widetilde \gamma$ is a universal positive constant.
\end{itemize}
A consequence of assumptions (\textbf{D1}) and (\textbf{D2}) which will be extensively used in the following is highlighted in Remark \ref{remark triangular subdecomposition}.
\begin{remark} \label{remark triangular subdecomposition}
Owing to assumptions (\textbf{D1}) and (\textbf{D2}), the following fact holds true.
The subtriangulation $\tautilden = \tautilden (\E)$ of $\E$, obtained by joining the vertices of $\E$ to the center of the ball with respect to which $\E$ is star-shaped, is made of triangles that are star-shaped with respect to balls with radius
greater than or equal to $\overline \gamma \hE$, where $\overline \gamma$ is a universal positive constant depending only on $\gamma$ and $\widetilde \gamma$.
For a proof of this fact, see \cite[Chapter 2]{MascottoPhDthesis}.
In the forthcoming analysis, we will often make use of standard functional inequalities (Poincar\'e, trace, \dots).
If not explicitly mentioned, the constants in such inequalities, and therefore also in all the results we will present, depend solely on the shape regularity of $\taun$ and $\tautilden$.
%For ease of presentation, we  will not mention this issue anymore.}
\end{remark}
Next, we associate to each element $\E \in \taun$ a local degree of accuracy $\pE$. To each boundary edge $\e \in \mathcal E^b_n$,
we associate $\pe$ equal to $\p_{\Etilde}$, where $\Etilde$ is the unique polygon having $\e$ as an edge.
On the other hand, to each internal edge $\e \in \mathcal E_n \setminus \mathcal E_n^b$ we associate $\pe$ equal to $\max(\p_{\E_1}, \p_{\E_2})$, where $\e = \overline{\E_1} \cap \overline{\E_2}$.

Given now $\tautilden$ the regular subtriangulation of $\Omega$ obtained by gluing the local triangulations introduced in Remark \ref{remark triangular subdecomposition},
we let $\widetilde {\mathcal V}_n$ and $\widetilde{\mathcal E}_n$ be the sets of vertices and edges of $\tautilden$, respectively.
To each $\T \in \tautilden$ with $\T \subset \E$, we associate $\pT$ equal to $\pE$.
We denote by $\mathbf \p$ the vector of degrees of accuracy associated with polygonal mesh $\taun$,
whereas we denote by $\widetilde{\mathbf \p}$ the vector of degrees associated with triangular subdecomposition $\tautilden$.
To each edge~$\etilde$ of~$\T$ in the boundary of~$\E$ we associate $\p_{\etilde}$ equal to $\pe$, where $\e =  \overline T \cap \overline \E = \widetilde \e$.
To the other edges $\etilde$ of $\T$, we simply associate $\p_{\etilde}$ equal to $\pE$.
We also associate with each vertex $\VERT \in \widetilde{\mathcal V}_n$
\begin{equation} \label{power vertex}
\pV =  \max \left\{ \pT \mid \VERT \in \overline \T,\, \T \in \tautilden \right\}.
\end{equation}
See Figure \ref{figure 1} for a graphical idea regarding the distribution of the degrees over~$\taun$, $\tautilden$, $\mathcal E_n$, and~$\widetilde{\mathcal E}_n$.

\begin{figure}[H]
\centering
\begin{minipage}{0.23\textwidth}
\begin{center}
\begin{tikzpicture}[scale=1.2]
\draw[black, thick, -] (0,0) -- (1,0) -- (1,1) -- (0,1) -- (0,0);
\draw[black, thick, -] (0,0) -- (-0.5, -0.5) -- (-1,0) -- (-1,1) -- (-0.5, 1.5) -- (0,1) -- (0,0);
\draw[black, thick, -] (0,0) -- (1,0) -- (1,-1) -- (-0.5, -1) -- (-0.5, -0.5) --  (0,0);
\draw (0.5,0.5) node[black] {\tiny{$2$}}; \draw (-0.5,0.5) node[black] {\tiny{$4$}}; \draw (0.23,-0.5) node[black] {\tiny{$3$}};
\draw (-1,0) node[black, fill=white] {\tiny{$4$}}; \draw (-1,1) node[black, fill=white] {\tiny{$4$}}; \draw (-0.5,1.5) node[black, fill=white] {\tiny{$4$}};
\draw (0,1) node[black, fill=white] {\tiny{$4$}}; \draw (0,0) node[black, fill=white] {\tiny{$4$}}; \draw (-0.5,-0.5) node[black, fill=white] {\tiny{$4$}};
\draw (1,0) node[black, fill=white] {\tiny{$4$}}; \draw (1,-1) node[black, fill=white] {\tiny{$3$}}; \draw (-0.5,-1) node[black, fill=white] {\tiny{$3$}};
\draw (1,1) node[black, fill=white] {\tiny{$2$}};
\end{tikzpicture}
\end{center}
\end{minipage}
\begin{minipage}{0.23\textwidth}
\begin{center}
\begin{tikzpicture}[scale=1.2]
\draw[black, thick, -] (0,0) -- (1,0) -- (1,1) -- (0,1) -- (0,0);
\draw[black, thick, -] (0,0) -- (-0.5, -0.5) -- (-1,0) -- (-1,1) -- (-0.5, 1.5) -- (0,1) -- (0,0);
\draw[black, thick, -] (0,0) -- (1,0) -- (1,-1) -- (-0.5, -1) -- (-0.5, -0.5) --  (0,0);
\draw (0.5,1) node[black, fill=white] {\tiny{$2$}}; \draw (1,0.5) node[black, fill=white] {\tiny{$2$}}; \draw (0.5,0) node[black, fill=white] {\tiny{$3$}}; \draw (0,0.5) node[black, fill=white] {\tiny{$4$}};
\draw (1,-0.5) node[black, fill=white] {\tiny{$3$}}; \draw (0.25,-1) node[black, fill=white] {\tiny{$3$}}; \draw (-0.5,-0.75) node[black, fill=white] {\tiny{$3$}}; \draw (-0.25,-0.25) node[black, fill=white] {\tiny{$4$}};
\draw (-0.75,-0.25) node[black, fill=white] {\tiny{$4$}};  \draw (-1,0.5) node[black, fill=white] {\tiny{$4$}}; \draw (-0.75,1.25) node[black, fill=white] {\tiny{$4$}}; \draw (-0.25,1.25) node[black, fill=white] {\tiny{$4$}};
\end{tikzpicture}\end{center}
\end{minipage}
\begin{minipage}{0.23\textwidth}
\begin{center}
\begin{tikzpicture}[scale=1.2]
\draw[black, thick, -] (0,0) -- (1,0) -- (1,1) -- (0,1) -- (0,0);
\draw[black, thick, -] (0,0) -- (-0.5, -0.5) -- (-1,0) -- (-1,1) -- (-0.5, 1.5) -- (0,1) -- (0,0);
\draw[black, thick, -] (0,0) -- (1,0) -- (1,-1) -- (-0.5, -1) -- (-0.5, -0.5) --  (0,0);
\draw[black, dashed,thin, -]   (-0.5,-0.5) -- (-0.5,0.5); \draw[black, dashed,thin, -]   (-1,0) -- (-0.5,0.5); \draw[black, dashed,thin, -]   (-1,1) -- (-0.5,0.5);
\draw[black, dashed,thin, -]   (-0.5,1.5) -- (-0.5,0.5); \draw[black, dashed,thin, -]   (0,1) -- (-0.5,0.5); \draw[black, dashed,thin, -]   (0,0) -- (-0.5,0.5);
\draw[black, dashed,thin, -]   (0.5,0.5) -- (1,1); \draw[black, dashed,thin, -]   (0.5,0.5) -- (1,0); \draw[black, dashed,thin, -]   (0.5,0.5) -- (0,1); \draw[black, dashed,thin, -]   (0.5,0.5) -- (0,0);
\draw[black, dashed,thin, -]   (0.25,-0.5) -- (0,0); \draw[black, dashed,thin, -]   (0.25,-0.5) -- (1,0); \draw[black, dashed,thin, -]   (0.25,-0.5) -- (1,-1); \draw[black, dashed,thin, -]   (0.25,-0.5) -- (-0.5,-1); \draw[black, dashed,thin, -]   (0.25,-0.5) -- (-0.5,-0.5);
\draw (0.8,0.5) node[black] {\tiny{$2$}}; \draw (0.2,0.5) node[black] {\tiny{$2$}}; \draw (0.5,0.2) node[black] {\tiny{$2$}}; \draw (0.5,0.8) node[black] {\tiny{$2$}}; \draw (0.5,0.5) node[black, fill=white] {\tiny{$2$}}; 
\draw (-.2,0.5) node[black] {\tiny{$4$}}; \draw (-.8,0.5) node[black] {\tiny{$4$}}; \draw(-.3,1) node[black] {\tiny{$4$}}; \draw(-.7,1) node[black] {\tiny{$4$}}; \draw(-.3,0) node[black] {\tiny{$4$}}; \draw(-.7,0) node[black] {\tiny{$4$}}; \draw (-.5,0.5) node[black, fill=white] {\tiny{$4$}};
\draw (.7,-0.5) node[black] {\tiny{$3$}}; \draw (.35,-.2) node[black] {\tiny{$3$}}; \draw (.25,-.8) node[black] {\tiny{$3$}}; \draw (-0.05,-.35) node[black] {\tiny{$3$}}; \draw (-.3,-.7) node[black] {\tiny{$3$}}; \draw (.3,-.5) node[black, fill=white] {\tiny{$3$}};
\end{tikzpicture}
\end{center}
\end{minipage}
\begin{minipage}{0.23\textwidth}
\begin{center}
\begin{tikzpicture}[scale=1.2]
\draw[black, thick, -] (0,0) -- (1,0) -- (1,1) -- (0,1) -- (0,0);
\draw[black, thick, -] (0,0) -- (-0.5, -0.5) -- (-1,0) -- (-1,1) -- (-0.5, 1.5) -- (0,1) -- (0,0);
\draw[black, thick, -] (0,0) -- (1,0) -- (1,-1) -- (-0.5, -1) -- (-0.5, -0.5) --  (0,0);
\draw[black, dashed,thin, -]   (-0.5,-0.5) -- (-0.5,0.5); \draw[black, dashed,thin, -]   (-1,0) -- (-0.5,0.5); \draw[black, dashed,thin, -]   (-1,1) -- (-0.5,0.5);
\draw[black, dashed,thin, -]   (-0.5,1.5) -- (-0.5,0.5); \draw[black, dashed,thin, -]   (0,1) -- (-0.5,0.5); \draw[black, dashed,thin, -]   (0,0) -- (-0.5,0.5);
\draw[black, dashed,thin, -]   (0.5,0.5) -- (1,1); \draw[black, dashed,thin, -]   (0.5,0.5) -- (1,0); \draw[black, dashed,thin, -]   (0.5,0.5) -- (0,1); \draw[black, dashed,thin, -]   (0.5,0.5) -- (0,0);
\draw[black, dashed,thin, -]   (0.25,-0.5) -- (0,0); \draw[black, dashed,thin, -]   (0.25,-0.5) -- (1,0); \draw[black, dashed,thin, -]   (0.25,-0.5) -- (1,-1); \draw[black, dashed,thin, -]   (0.25,-0.5) -- (-0.5,-1); \draw[black, dashed,thin, -]   (0.25,-0.5) -- (-0.5,-0.5);
\draw (0.75,0.75) node[black, fill=white] {\tiny{$2$}}; \draw (0.75,0.25) node[black, fill=white] {\tiny{$2$}}; \draw (0.25,0.75) node[black, fill=white] {\tiny{$2$}}; \draw (0.25,0.25) node[black, fill=white] {\tiny{$2$}};
\draw (0.65,-0.75) node[black, fill=white] {\tiny{$3$}}; \draw (0.65,-0.25) node[black, fill=white] {\tiny{$3$}}; \draw (0.2,-0.25) node[black, fill=white] {\tiny{$3$}}; \draw (-.125,-0.5) node[black, fill=white] {\tiny{$3$}}; \draw (-.125,-0.75) node[black, fill=white] {\tiny{$3$}};
\draw (-.5,0) node[black, fill=white] {\tiny{$4$}}; \draw (-.5,1) node[black, fill=white] {\tiny{$4$}}; \draw (-.25,.75) node[black, fill=white] {\tiny{$4$}}; \draw (-.75,.75) node[black, fill=white] {\tiny{$4$}}; 
\draw (-.25,.25) node[black, fill=white] {\tiny{$4$}};  \draw (-.75,.25) node[black, fill=white] {\tiny{$4$}};
\end{tikzpicture}
\end{center}
\end{minipage}
\caption{Local degrees of accuracy distribution. Left: distribution over $\taun$, the polygonal decomposition, and $\mathcal V_n$ of $\taun$.
Center-left: distribution over $\mathcal E_n$, the set of edges of $\taun$. Center-right: distribution over $\tautilden$, the subtriangulation of $\taun$, and $\widetilde{\mathcal V}_n$, the set of vertices of $\tautilden$.
Right: distribution over $\widetilde{\mathcal E}_n$, the set of edges of $\tautilden$.}
\label{figure 1}
\end{figure}
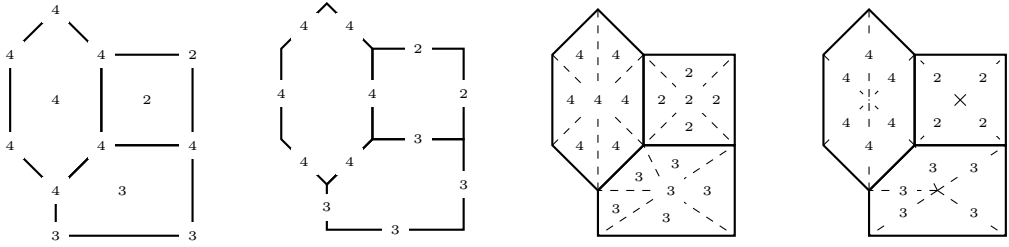
We also ask for the following assumption on the local degree of accuracy distribution associated with $\taun$:
\begin{itemize}
\item[(\textbf{P1})] for every $\E$ and $\E'$ in $\taun$ with~$\overline \E \cap \overline{\E'}\ne \emptyset $, there exists a positive universal constant $c$ such that
\begin{equation} \label{assumption local degrees of accuracy}
\vert \pE - \p_{\E'}    \vert  \le c.
%\pE \le \p_{\E'} + c \quad  \quad \quad \text{if } \quad \text{ and } \quad  \p_{\E'} \le \pE.
\end{equation}
\end{itemize}
As a consequence of the assumption (\textbf{P1}) and the above construction, it also holds $\pe \approx \pE$ whenever $\e$ is an edge of $\E$.

We introduce now the virtual element method associated with the Poisson problem \eqref{Poisson problem homogeneous Dirichlet weak formulation}. In particular, following \cite{VEMvolley}, we introduce a finite dimensional subspace $\Vn$ of $\V$, a discrete bilinear form
$\an : \Vn \times \Vn \rightarrow \mathbb R$ and a discrete right-hand side $\fn$, such that the method
\begin{equation} \label{hp VEM}
\begin{cases}
\text{find } \un \in \Vn \text{ such that}\\
\an(\un, \vn) = \langle \fn , \vn \rangle_n \quad \forall \vn \in \Vn,
\end{cases}
\end{equation}
is well-posed and its output $\un$ approximates the solution $\u$ of \eqref{Poisson problem homogeneous Dirichlet weak formulation}.

In Section \ref{subsection virtual space}, we describe the virtual element space $\Vn$. Next, in Section \ref{subsection discrete bilinear form}, we present a computable choice for the discrete bilinear form $\an$.
Finally, in Section \ref{section discrete right-hand side}, we discuss the construction of the discrete right-hand side $\langle \fn, \cdot \rangle_n$.
%We point out that the construction of the $\h\p$ virtual element method \eqref{hp VEM} was firstly discussed in \cite{hpVEMcorner}.

%Under the choice presented in Sections \ref{subsection virtual space}, \ref{subsection discrete bilinear form} and \ref{section discrete right-hand side}
%for the virtual element space, the discrete bilinear form and the discrete right-hand side, respectively, it is possible to show that method \eqref{hp VEM} is well-posed.
%Further, optimal a priori error bounds hold true. See \cite{hpVEMbasic, hpVEMcorner}.

% ----------------------------------------------------------------------------------------------------------------------------------------------------------------------------------------------------------
\subsection{The virtual element space} \label{subsection virtual space}
The present section is devoted to introduce the virtual element space $\Vn$.

We begin by defining the local virtual element spaces. Given $\E \in \taun$, we set first
\[
\mathbb B (\partial \E) = \{\vn \in \mathcal C^0(\partial \E) \mid \vv _{n|_\e} \in \mathbb P_{\pe}(\e) \text{ for all edge } \e \text{ in } \mathcal E^\E\}.
\]
The local virtual element space over polygon $\E$ reads
\begin{equation} \label{local spaces}
\VnE = \{ \vn \in H^1(\E) \mid  \Delta \vn \in \mathbb P_{\pE-2}(\E),\; \vv_{n |_{\partial \E}} \in \mathbb B (\partial \E) \}.
\end{equation}
Let us consider the following set of linear functionals defined on $\V(\E)$: given $\vn \in \VnE$,
\begin{itemize}
\item the point-values of $\vn$ at the vertices of $\E$;
\item for every edge $\e \in \mathcal E^\E$, the point-values at the $\pe-1$ internal Gau\ss-Lobatto nodes on $\e$;
\item the internal moments
\begin{equation} \label{internal moments}
\frac{1}{\vert \E \vert} \int \m_{\boldalpha} \vn \quad \forall \boldalpha \in \mathbb N^2,\quad \vert \boldalpha \vert := \alpha_1+\alpha_2 \le \pE-2,
\end{equation}
where $\{\m_{\boldalpha},\, \boldalpha \in \mathbb N^2,\, \vert \boldalpha \vert \le \pE-2\}$ is \emph{any} basis of $\mathbb P_{\pE-2}(\E)$, provided that $\Vert \m_{\boldalpha} \Vert _{\infty ,\E}\approx 1$ and that it is invariant with respect to homothetic transformations.
\end{itemize}
These linear functionals are a set of unisolvent degrees of freedom (dofs), see \cite{VEMvolley}.
\begin{remark} \label{remark internal moments}
The choice of the polynomial basis dual to internal moments play a fundamental role in the prospective ill-conditioning of the high-order virtual element method.
We refer to \cite{fetishVEM, fetishVEM3D} for a deep insight on this topic and to Section \ref{section numerical results} for an explicit choice of such polynomial basis.
\end{remark}
Although the space $\VnE$ is not defined explicitly inside the element $\E$, by means of the set of degrees of freedom above,
it is possible to compute a couple of local projectors, see \cite{hitchhikersguideVEM}. The first operator is the $L^2$ projector $\PizpmdE : \VnE \rightarrow \mathbb P_{\pE - 2}(\E)$ defined as
\begin{equation} \label{L2 projection}
(\vn - \PizpmdE \vn, \q)_{0,\E} = 0 \quad \forall \vn \in \VnE,\; \forall \q \in \mathbb P_{\pE-2} (\E).
\end{equation}
The second one is an $H^1$ projector $\PinablaE : \VnE \rightarrow \mathbb P_{\pE} (\E)$ defined as
\begin{equation} \label{H1 projector}
\begin{cases}
\aE(\vn - \PinablaE \vn, \q) = 0\\
\int_{\partial \E} \vn - \PinablaE \vn = 0
\end{cases} \quad \forall \vn \in \VnE,\; \forall \q \in \mathbb P_{\pE}(\E).
\end{equation}
When no confusion occurs we will write $\Pizpmd$ and $\Pinabla$ in lieu of $\PizpmdE$ and $\PinablaE$.

The global virtual element space is obtained as in FEM by a standard conforming coupling of the local sets of degrees of freedom and by enforcing homogeneous boundary conditions on $\partial \Omega$:
\begin{equation} \label{global space}
\Vn = \{ \vn \in H^1_0(\Omega) \cap \mathcal C^0(\overline \Omega) \mid \vv_{n |_{\E}} \in \VnE\, \forall \E\in \taun  \}.
\end{equation}
We also introduce the set of global degrees of freedom and the global canonical basis
\begin{equation} \label{canonical basis}
\{\text{dof}_i\}_{i=1}^{\dim(\Vn)},   \quad\quad\{\varphi_j\}_{j=1}^{\dim(\Vn)} \text{ such that } \text{dof}_i(\varphi_j) =\delta_{i,j},
\end{equation}
where $\delta_{i,j}$ is the Kronecker delta.
%Consequently, the set of global degrees of freedom is given by a standard coupling of their local counterparts.
\begin{remark} \label{remark on bcs}
We point out that if we aim to approximate a Poisson problem \eqref{strong Poisson problem} with a non-homogeneous Dirichlet boundary condition $\g \in H^{\frac{1}{2}}(\partial \Omega)$,
then the global space \eqref{global space} is modified into
\[
\Vn^{\g} = \{\vn \in H^1(\Omega) \mid \vv_{n|_\e} = \g_{GL|_\e}\, \forall \e \in \mathcal E_n^b,\quad   \vv_{n |_{\E}} \in \VnE\, \forall \E\in \taun\},
\]
where~$\g_{GL|_\e}$ denotes the Gau\ss-Lobatto interpolant of~$\g$ over the boundary edge~$\e$. In fact, Gau\ss-Lobatto interpolation on the boundary guarantees optimal approximation properties for the~$\p$ and~$\h\p$ versions of VEM, see~\cite{conformingHarmonicVEM}.
However, in order to avoid cumbersome technicalities in the following, we assume henceforth to deal with homogeneous Dirichlet boundary conditions.
\end{remark}

% ----------------------------------------------------------------------------------------------------------------------------------------------------------------------------------------------------------
\subsection{The discrete bilinear form} \label{subsection discrete bilinear form}
Here, we introduce the discrete bilinear form $\an$ associated with method \eqref{hp VEM}.
To this purpose, we follow the guidelines of \cite{VEMvolley, hpVEMbasic}.
Given $\anE : \VnE \times \VnE \rightarrow \mathbb R$ local bilinear forms, the (global) discrete bilinear form is a sum of such local contributions:
\[
\an(\un, \vn) = \sum_{\E \in \taun} \anE(\un, \vn) \quad \forall \un, \, \vn \in \Vn,
\]
where the local discrete bilinear forms $\anE$ are assumed to satisfy
\begin{itemize}
\item[(\textbf{A1})]\textbf{polynomial consistency}: for every $\E \in \taun$,
\[
\anE(\q, \vn) = \aE(\q, \vn) \quad \forall \q \in \mathbb P_{\pE} (\E),\; \forall \vn \in \VnE;
\]
\item[(\textbf{A2})] \textbf{local stability}: for every $\E \in \taun$, there exist two positive constants $\alpha_*(\pE) \le \alpha^*(\pE)$ independent of $\hE$ but possibly depending on the local degree of accuracy $\pE$, such that
\begin{equation} \label{stability}
\alpha_*(\pE) \vert \vn \vert^2_{1,\E} \le \anE (\vn, \vn) \le \alpha^*(\pE) \vert \vn \vert^2 _{1,\E} \quad \forall \vn \in \VnE.
\end{equation}
\end{itemize}
The assumption (\textbf{A1}) implies that the discrete bilinear form is locally exact for piecewise polynomial functions having a proper degree.
On the other hand, the assumption (\textbf{A2}) guarantees the coercivity and the continuity of the global discrete bilinear form and hence the well-posedness of method \eqref{hp VEM}.

A number of computable discrete bilinear forms can be found in the literature. We refer to \cite{hpVEMcorner} for what concerns the first ``$\p$-explicit'' stabilization available in the VEM literature
and to \cite[Chapter 6]{MascottoPhDthesis} and to \cite{beiraolovadinarusso_stabilityVEM} for a survey of the topic.

Following \cite{VEMvolley}, the local bilinear forms $\anE$ can be built with the aid of an auxiliary local stabilizing bilinear form $\SE : \ker (\Pinabla) \times \ker (\Pinabla) \rightarrow \mathbb R$.
For every $\E \in \taun$, the bilinear form $\SE$ must satisfy
\begin{equation} \label{stabilizing bilinear form}
c_*(\pE) \vert \vn \vert_{1,\E}^2 \le \SE(\vn, \vn) \le c^*(\pE) \vert \vn \vert^2_{1,\E} \quad \forall \vn \in \ker (\Pinabla),
\end{equation}
where $c_*(\pE)$ and $c^*(\pE)$ are two positive constants independent of $\hE$ but possibly depending on the local degree of accuracy $\pE$. The local discrete bilinear form
\[
\anE(\un, \vn) = \aE(\Pinabla \un, \Pinabla \vn) + \SE((I- \Pinabla) \un, (I- \Pinabla) \vn) \quad \forall \un, \, \vn \in \VnE,
\]
where the energy projector $\Pinabla$ is defined in \eqref{H1 projector}, satisfies assumptions (\textbf{A1}) and (\textbf{A2}) with
$\alpha_*(\pE) = \min (1, c_*(\pE))$ and $\alpha^*(\pE) = \max (1, c^*(\pE))$, see \cite{VEMvolley}.

An explicit choice of the stabilization, and consequently also of the discrete bilinear form, will be discussed in Section \ref{section numerical results}.

% ----------------------------------------------------------------------------------------------------------------------------------------------------------------------------------------------------------
\subsection{The discrete right-hand side} \label{section discrete right-hand side}
Here, we present the discrete right-hand side of method \eqref{hp VEM}.
In particular, we split the global discrete right-hand side $\langle \fn, \cdot \rangle _n$ as a sum of local contributions:
\[
\langle \fn , \vn \rangle_{n} = \sum_{\E \in \taun} \langle \fn, \vn \rangle_{n,\E},
\]
where the local discrete duality pairing are defined as
\[
\langle \fn, \vn \rangle_{n,\E} = 
\begin{cases}
\int_\E \f \, \Pizpmd\vn & \text{if } \pE \ge 2\\
\int_\E \left(\f \, \int_{\partial \E} \vn   \right) & \text{if } \pE =1.\\
\end{cases}
\]
For a deeper analysis concerning the discrete right-hand side, we refer to \cite{equivalentprojectorsforVEM, VEMelasticity, hpVEMbasic}.

%%%%%%%%%%%%%%%%%%%%%%%%%%%%%%%%%%%%%%%%%%%%%%%%%%%%%%%%%%%%%%%%%%%%%%%%%%%
% ----------------------------------------------------------------------------------------------------------------------------------------------------------------------------------------------------------
%%%%%%%%%%%%%%%%%%%%%%%%%%%%%%%%%%%%%%%%%%%%%%%%%%%%%%%%%%%%%%%%%%%%%%%%%%%
\section{Technical tools} \label{section technical tools}
In this section, we introduce and discuss some technical tools that we will employ in the \aposteriori error analysis.
%In Subsection \ref{subsection virtual Clement type quasi interpolant}, we introduce a virtual \Clement quasi-interpolant operator with its approximation properties.
In Section \ref{subsection approximation estimates by means of functions in the Virtual Space}, we prove local $L^2$ and $H^1$ $\h\p$ approximation properties in terms of functions in the virtual element space.
In Section \ref{subsection inverse estimates on triangles and polygons}, we recall a couple of $\h\p$ polynomial inverse estimates, whereas,
in Section \ref{subsection a lifting operator}, we recall from \cite{MelenkWohlmuth_hpFEMaposteriori} the existence of a lifting operator of a polynomial from any edge of a triangle into its interior with good stability properties.

Henceforth, we adopt the following notation concerning spaces of piecewise continuous polynomials over triangular meshes. Given~$\tautilden$ a triangular mesh, we write
\begin{equation} \label{space piecewise continuous polynomials}
 S^{{\mathbf {\widetilde \p}},0}(\Omega, \tautilden) = \left\{ \q \in \mathcal C^0(\Omega) \mid \q_{|_\T} \in \mathbb P _{\widetilde {\p}_\T} (\T) \, \forall \T \in \tautilden,\;
					\q_{|_\e} \in \mathbb P_{\widetilde \p_\e}(\e)\, \forall \e \text{ edges of } \tautilden\right\},
\end{equation}
where the choice of the polynomial degrees $\widetilde \p_\e$ on the edges is picked as in Section \ref{section model problem},
see Figure~\ref{figure 1}.

%%%%%%%%%%%%%%%%%%%%%%%%%%%%%%%%%%%%%%%%%%%%%%%%%%%%%%%%%%%%%%%%%%%%%%%%%%%
\subsection{Local $\h\p$ approximation estimates} \label{subsection approximation estimates by means of functions in the Virtual Space}
In this section, we discuss about approximation properties of functions in the local virtual element spaces $\VnE$ defined in \eqref{local spaces}.
In particular, we prove local approximation estimates in the $L^2$ and $H^1$ (semi)norms and in the $L^2$ norm on the boundary.
Previous results on $\h\p$-VEM approximation can be found in \cite{hpVEMbasic, hpVEMcorner}.

We first need to recall a technical tool from \cite{MelenkWohlmuth_hpFEMaposteriori, melenk2003hp}.
Given $\tautilden$ the (regular) triangular subdecomposition of $\Omega$, defined locally by the subtriangulations of the polygons of $\taun$ introduced in Remark \ref{remark triangular subdecomposition},
and given a vertex $\VERT \in \widetilde{\mathcal V}_n$ of subtriangulation $\tautilden$, we set the \emph{triangular} patch around vertex $\VERT$ as
\begin{equation} \label{triangular patch around V}
\omega_{\Vbold} = \bigcup \left\{ \T \in \tautilden \mid \Vbold \text{ is a vertex of } \T  \right\}.%,\quad \overline \omega_{\Vbold} = \bigcup \left\{ \E \in \taun \mid \Vbold \text{ is a vertex of } \E  \right\}.
\end{equation}
Given $D$ either a polygon of $\taun$ or a triangle of $\tautilden$, we define the \emph{triangular} patch around $D$ as
\begin{equation} \label{triangular patch around D}
\omega_D := \bigcup \left\{ \omega_{\VERT} \mid \VERT \text{ is a vertex of } D \right\}.
\end{equation}
Besides, given an edge $\e \in \mathcal E_n$, we also define the \emph{triangular} patch around $\e$ as
\begin{equation} \label{triangular patch around e}
\omega_{\e} := \bigcup \left\{ \omega_{\VERT} \mid \VERT \text{ is an endpoint of } \e   \right\}.
\end{equation}
In Figures \ref{figure triangular patch around V}, \ref{figure triangular patch around K},  \ref{figure triangular patch around T} and \ref{figure triangular patch around e} we provide graphical examples of patches
around a vertex, a polygon, a triangle and an edge.

\begin{figure}[h]
\centering
\begin{minipage}{0.32\textwidth}
\begin{center}
\begin{tikzpicture}[scale=1.2]
\draw[black, thick, -] (0,0) -- (1,0) -- (1,1) -- (0,1) -- (0,0);
\draw[black, thick, -] (0,0) -- (-0.5, -0.5) -- (-1,0) -- (-1,1) -- (-0.5, 1.5) -- (0,1) -- (0,0);
\draw[black, thick, -] (0,0) -- (1,0) -- (1,-1) -- (-0.5, -1) -- (-0.5, -0.5) --  (0,0);
\draw[black, dashed,thin, -]   (-0.5,-0.5) -- (-0.5,0.5); \draw[black, dashed,thin, -]   (-1,0) -- (-0.5,0.5); \draw[black, dashed,thin, -]   (-1,1) -- (-0.5,0.5);
\draw[black, dashed,thin, -]   (-0.5,1.5) -- (-0.5,0.5); \draw[black, dashed,thin, -]   (0,1) -- (-0.5,0.5); \draw[black, dashed,thin, -]   (0,0) -- (-0.5,0.5);
\draw[black, dashed,thin, -]   (0.5,0.5) -- (1,1); \draw[black, dashed,thin, -]   (0.5,0.5) -- (1,0); \draw[black, dashed,thin, -]   (0.5,0.5) -- (0,1); \draw[black, dashed,thin, -]   (0.5,0.5) -- (0,0);
\draw[black, dashed,thin, -]   (0.25,-0.5) -- (0,0); \draw[black, dashed,thin, -]   (0.25,-0.5) -- (1,0); \draw[black, dashed,thin, -]   (0.25,-0.5) -- (1,-1); \draw[black, dashed,thin, -]   (0.25,-0.5) -- (-0.5,-1); \draw[black, dashed,thin, -]   (0.25,-0.5) -- (-0.5,-0.5);
\draw[fill=red] (0,0) circle (3pt);
\end{tikzpicture}
\end{center}
\end{minipage}
\begin{minipage}{0.32\textwidth}
\begin{center}
\begin{tikzpicture}[scale=1.2]
\draw[black, thick, -, fill=blue, opacity=0.2] (1,0) -- (0,1) -- (-0.5, 0.5) -- (-0.5, -0.5) -- (0.25, -0.5) -- (1,0);
\draw[black, thick, -] (1,0) -- (1,1) -- (0,1) -- (1,0);
\draw[black, thick, -] (0,1) -- (-0.5, 1.5) -- (-1,1) -- (-1,0) -- (-0.5, -0.5) -- (-0.5, 0.5) -- (0,1);
\draw[black, thick, -] (1,0) -- (1,-1) -- (-0.5, -1) -- (-0.5, -0.5) -- (0.25,-0.5) -- (1,0);
\end{tikzpicture}
\end{center}
\end{minipage}
%\begin{minipage}{0.32\textwidth}
%\begin{center}
%\begin{tikzpicture}[scale=1.2]
%\draw[black, thick, -, fill=blue, opacity=0.2] (1,1) -- (0,1) -- (-0.5, 1.5) -- (-1, 1) -- (-1, 0) -- (-0.5, -0.5) -- (-0.5, -1) -- (1,-1) -- (1,1);
%\draw[black, thick, -] (1,1)--(0,1)--(-0.5,1.5)--(-1,1)--(-1,0)--(-0.5,-0.5)--(-0.5,-1)--(1,-1)--(1,1);
%\end{tikzpicture}
%\end{center}
%\end{minipage}
\caption{Left: a polygonal mesh $\taun$ and its subtriangulation $\tautilden$, in a red circle a vertex $\Vbold$. Right: in light blue, $\omega_{\Vbold}$, the triangular patch around vertex $\Vbold$, defined in \eqref{triangular patch around V}.}
%Right: in light blue, $\overline \omega_{\Vbold}$, polygonal patch around vertex $\Vbold$, defined in \eqref{triangular patch around V}.}
\label{figure triangular patch around V}
\end{figure}

\begin{figure}[h]
\centering
\begin{minipage}{0.32\textwidth}
\begin{center}
\begin{tikzpicture}[scale=1.2]
\draw[black, thick, -,fill=red, opacity=0.2] (0,0) -- (1,0) -- (1,1) -- (0,1) -- (0,0);
\draw[black, thick, -] (0,0) -- (-0.5, -0.5) -- (-1,0) -- (-1,1) -- (-0.5, 1.5) -- (0,1) -- (0,0);
\draw[black, thick, -] (0,0) -- (1,0) -- (1,-1) -- (-0.5, -1) -- (-0.5, -0.5) --  (0,0);
\draw[black, thick, -] (1,0)--(1,1)--(0,1);
\end{tikzpicture}
\end{center}
\end{minipage}
\begin{minipage}{0.32\textwidth}
\begin{center}
\begin{tikzpicture}[scale=1.2]
\draw[black, thick, -] (0,0) -- (1,0) -- (1,1) -- (0,1) -- (0,0);
\draw[black, thick, -] (0,0) -- (-0.5, -0.5) -- (-1,0) -- (-1,1) -- (-0.5, 1.5) -- (0,1) -- (0,0);
\draw[black, thick, -] (0,0) -- (1,0) -- (1,-1) -- (-0.5, -1) -- (-0.5, -0.5) --  (0,0);
\draw[black, dashed,thin, -]   (-0.5,-0.5) -- (-0.5,0.5); \draw[black, dashed,thin, -]   (-1,0) -- (-0.5,0.5); \draw[black, dashed,thin, -]   (-1,1) -- (-0.5,0.5);
\draw[black, dashed,thin, -]   (-0.5,1.5) -- (-0.5,0.5); \draw[black, dashed,thin, -]   (0,1) -- (-0.5,0.5); \draw[black, dashed,thin, -]   (0,0) -- (-0.5,0.5);
\draw[black, dashed,thin, -]   (0.5,0.5) -- (1,1); \draw[black, dashed,thin, -]   (0.5,0.5) -- (1,0); \draw[black, dashed,thin, -]   (0.5,0.5) -- (0,1); \draw[black, dashed,thin, -]   (0.5,0.5) -- (0,0);
\draw[black, dashed,thin, -]   (0.25,-0.5) -- (0,0); \draw[black, dashed,thin, -]   (0.25,-0.5) -- (1,0); \draw[black, dashed,thin, -]   (0.25,-0.5) -- (1,-1); \draw[black, dashed,thin, -]   (0.25,-0.5) -- (-0.5,-1); \draw[black, dashed,thin, -]   (0.25,-0.5) -- (-0.5,-0.5);
\end{tikzpicture}
\end{center}
\end{minipage}
\begin{minipage}{0.32\textwidth}
\begin{center}
\begin{tikzpicture}[scale=1.2]
\draw[black, thick, -, fill=blue, opacity=0.2] (1, - 1) -- (1,1) -- (0,1) -- (-0.5, 1.5) -- (-0.5,-0.5) -- (0.25, -0.5) -- (1, -1);
\draw[black, thick, -] (-0.5, -0.5) -- (-0.5,1.5) -- (-1,1) -- (-1, 0) -- (-0.5, -0.5);
\draw[black, thick, -] (-0.5, -0.5) -- (-0.5, -1) --  (1,-1) -- (0.25, -0.5) -- (-0.5, -0.5);
\draw[black, thick, -] (1,-1)--(1,1)--(0,1)--(-0.5,1.5)--(-0.5,-0.5)--(0.25,-0.5);
\end{tikzpicture}
\end{center}
\end{minipage}
\caption{Left: a polygonal mesh $\taun$, in light red a polygon $\E$. Center: the regular subtriangulation $\tautilden$.
Right: in light blue, $\omega_{\E}$, the triangular patch around $\E$, defined in \eqref{triangular patch around D}.} \label{figure triangular patch around K}
\end{figure}

\begin{figure}[h]
\centering
\begin{minipage}{0.32\textwidth}
\begin{center}
\begin{tikzpicture}[scale=1.2]
\draw[black, thick, -] (0,0) -- (1,0) -- (1,1) -- (0,1) -- (0,0);
\draw[black, thick, -] (0,0) -- (-0.5, -0.5) -- (-1,0) -- (-1,1) -- (-0.5, 1.5) -- (0,1) -- (0,0);
\draw[black, thick, -] (0,0) -- (1,0) -- (1,-1) -- (-0.5, -1) -- (-0.5, -0.5) --  (0,0);
\end{tikzpicture}
\end{center}
\end{minipage}
\begin{minipage}{0.32\textwidth}
\begin{center}
\begin{tikzpicture}[scale=1.2]
\draw[black, thick, -] (0,0) -- (1,0) -- (1,1) -- (0,1) -- (0,0);
\draw[black, thick, -] (0,0) -- (-0.5, -0.5) -- (-1,0) -- (-1,1) -- (-0.5, 1.5) -- (0,1) -- (0,0);
\draw[black, thick, -] (0,0) -- (1,0) -- (1,-1) -- (-0.5, -1) -- (-0.5, -0.5) --  (0,0);
\draw[black, dashed,thin, -]   (-0.5,-0.5) -- (-0.5,0.5); \draw[black, dashed,thin, -]   (-1,0) -- (-0.5,0.5); \draw[black, dashed,thin, -]   (-1,1) -- (-0.5,0.5);
\draw[black, dashed,thin, -]   (-0.5,1.5) -- (-0.5,0.5); \draw[black, dashed,thin, -]   (0,1) -- (-0.5,0.5); \draw[black, dashed,thin, -]   (0,0) -- (-0.5,0.5);
\draw[black, dashed,thin, -]   (0.5,0.5) -- (1,1); \draw[black, dashed,thin, -]   (0.5,0.5) -- (1,0); \draw[black, dashed,thin, -]   (0.5,0.5) -- (0,1); \draw[black, dashed,thin, -]   (0.5,0.5) -- (0,0);
\draw[black, dashed,thin, -]   (0.25,-0.5) -- (0,0); \draw[black, dashed,thin, -]   (0.25,-0.5) -- (1,0); \draw[black, dashed,thin, -]   (0.25,-0.5) -- (1,-1); \draw[black, dashed,thin, -]   (0.25,-0.5) -- (-0.5,-1); \draw[black, dashed,thin, -]   (0.25,-0.5) -- (-0.5,-0.5);
\draw[black, dashed,thin, -, fill=red, opacity = 0.2]  (0,0) -- (-0.5,0.5) -- (0,1) -- (0,0);
\end{tikzpicture}
\end{center}
\end{minipage}
\begin{minipage}{0.32\textwidth}
\begin{center}
\begin{tikzpicture}[scale=1.2]
\draw[black, thick, -] (1,1) -- (1,0) -- (0.5, 0.5) -- (1,1);
\draw[black, thick, -] (0.25,-0.5) -- (1,0) -- (1,-1) -- (-0.5, -1) -- (-0.5, -0.5) --  (0.25,-0.5);
\draw[black,thin, -, fill=blue, opacity=0.2]  (1,0) -- (0.5,0.5) -- (1,1) -- (0,1) -- (-0.5,1.5) -- (-1,1) -- (-1,0) -- (-0.5,-0.5) -- (0.25, -0.5) -- (1,0);
\draw[black, thick, -] (1,1)--(0,1)--(-0.5,1.5)--(-1,1)--(-1,0)--(-0.5,-0.5);
\end{tikzpicture}
\end{center}
\end{minipage}
\caption{Left: a polygonal mesh $\taun$. Center: the regular subtriangulation $\tautilden$,
in light red a triangle $\T$. Right: in light blue, $\omega_{\T}$, the triangular patch around $\T$, defined in \eqref{triangular patch around D}.} \label{figure triangular patch around T}
\end{figure}
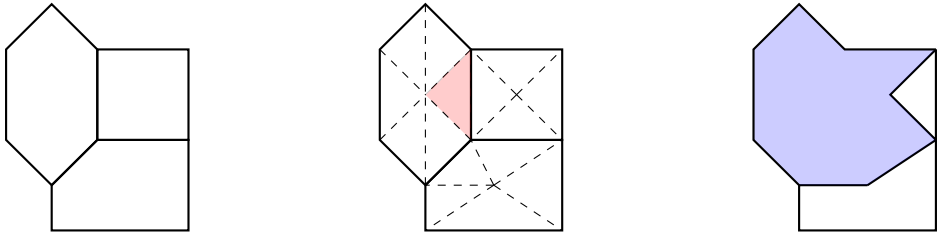

\begin{figure}[h]
\centering
\begin{minipage}{0.32\textwidth}
\begin{center}
\begin{tikzpicture}[scale=1.2]
\draw[black, thick, -] (0,0) -- (1,0) -- (1,1) -- (0,1) -- (0,0);
\draw[black, thick, -] (0,0) -- (-0.5, -0.5) -- (-1,0) -- (-1,1) -- (-0.5, 1.5) -- (0,1) -- (0,0);
\draw[black, thick, -] (0,0) -- (1,0) -- (1,-1) -- (-0.5, -1) -- (-0.5, -0.5) --  (0,0);
\draw[red, line width=1mm] (0,0) -- (-0.5, -0.5);
\end{tikzpicture}
\end{center}
\end{minipage}
\begin{minipage}{0.32\textwidth}
\begin{center}
\begin{tikzpicture}[scale=1.2]
\draw[black, thick, -] (0,0) -- (1,0) -- (1,1) -- (0,1) -- (0,0);
\draw[black, thick, -] (0,0) -- (-0.5, -0.5) -- (-1,0) -- (-1,1) -- (-0.5, 1.5) -- (0,1) -- (0,0);
\draw[black, thick, -] (0,0) -- (1,0) -- (1,-1) -- (-0.5, -1) -- (-0.5, -0.5) --  (0,0);
\draw[black, dashed,thin, -]   (-0.5,-0.5) -- (-0.5,0.5); \draw[black, dashed,thin, -]   (-1,0) -- (-0.5,0.5); \draw[black, dashed,thin, -]   (-1,1) -- (-0.5,0.5);
\draw[black, dashed,thin, -]   (-0.5,1.5) -- (-0.5,0.5); \draw[black, dashed,thin, -]   (0,1) -- (-0.5,0.5); \draw[black, dashed,thin, -]   (0,0) -- (-0.5,0.5);
\draw[black, dashed,thin, -]   (0.5,0.5) -- (1,1); \draw[black, dashed,thin, -]   (0.5,0.5) -- (1,0); \draw[black, dashed,thin, -]   (0.5,0.5) -- (0,1); \draw[black, dashed,thin, -]   (0.5,0.5) -- (0,0);
\draw[black, dashed,thin, -]   (0.25,-0.5) -- (0,0); \draw[black, dashed,thin, -]   (0.25,-0.5) -- (1,0); \draw[black, dashed,thin, -]   (0.25,-0.5) -- (1,-1); \draw[black, dashed,thin, -]   (0.25,-0.5) -- (-0.5,-1); \draw[black, dashed,thin, -]   (0.25,-0.5) -- (-0.5,-0.5);
\end{tikzpicture}
\end{center}
\end{minipage}
\begin{minipage}{0.32\textwidth}
\begin{center}
\begin{tikzpicture}[scale=1.2]
\draw[black, thick, -] (1,0) -- (1,1) -- (0,1) -- (1,0);
\draw[black, thick, -] (0,1) -- (-0.5, 1.5) -- (-1,1) -- (-1,0) -- (0,1);
\draw[black, thick, -] (-0.5,-1) -- (1,-1) -- (1,0) -- (-0.5, -1);
\draw[black, thick, -, fill=blue, opacity=0.2] (1,0) -- (0,1) -- (-1,0) -- (-0.5,-0.5) -- (-0.5,-1) -- (1,0);
\draw[black, thick, -] (-1,0)--(-0.5,-0.5) -- (-0.5,-1);
\end{tikzpicture}
\end{center}
\end{minipage}
\caption{Left: a polygonal mesh $\taun$, in red an edge $\e$. Center: the regular subtriangulation $\tautilden$.
Right: in light blue, $\omega_{\e}$, the triangular patch around edge $\e$, defined in \eqref{triangular patch around e}.} \label{figure triangular patch around e}
\end{figure}
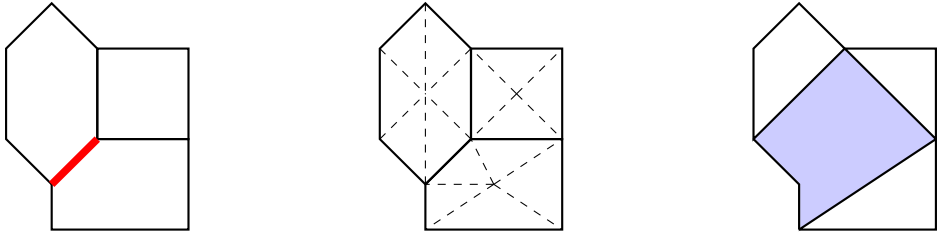

It can be proven that, using assumptions (\textbf{D1})-(\textbf{D2}),
$\text{diam}(\omega_D) \approx \text{diam}(D)$, $D$ being either a polygon or a triangle and $\omega(D)$ being the associated triangular patch.

We now recall the following polynomial \Clement-type approximation result over triangles.
%Let $S^{\pbold, 0}(\Omega, \taun)$ be the space of piecewise continuous polynomials over decomposition $\tautilden$ of degree $\pT$ over each triangle $\T \in \tautilden$.
\begin{lem} \label{lemma Clement Melenk}
Let the assumptions (\textbf{D1})-(\textbf{D2}) be valid.
Then, given $\u \in H^1_0(\Omega)$, there exists $\uM \in S^{\ptildebold, 0} (\Omega, \tautilden) \cap H^1_0(\Omega)$, see~\eqref{space piecewise continuous polynomials}, such that, for each triangle $\T \in \tautilden$ and for each $\e$ edge of $\widetilde{\mathcal E}_n$
(i.e. of the skeleton of $\tautilden$), the following estimates hold true:
\begin{subequations}
\begin{align}
&\Vert \u - \uM \Vert_{0,\T} + \frac{\hT}{\pT} \vert \u - \uM \vert_{1,\T} \le c_M \frac{\hT}{\pT} \vert \u \vert_{1,\omega_{\T}}, \label{Clement Melenk estimates a}\\
&\Vert \u - \uM \Vert_{0,\e} \le c_M \left( \frac{\hT}{\pT} \right)^{\frac{1}{2}} \vert \u \vert_{1,\omega_{\e}}, \label{Clement Melenk estimates b}
\end{align}
\end{subequations}
where $c_M$ is a positive constant independent on $\u$, $\hT$ and $\pT$ but depending on the shape-regularity of $\taun$ and hence of $\tautilden$.
Above, the symbols $\omega_{\T}$ and $\omega_{\e}$ represent the triangular patch around triangle $\T$
and the triangular patch around edge $\e$ defined in \eqref{triangular patch around D} and \eqref{triangular patch around e}, respectively.
\end{lem}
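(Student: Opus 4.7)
My plan is to construct $\uM$ via the $hp$-Clément quasi-interpolant of \cite{melenk2003hp, MelenkWohlmuth_hpFEMaposteriori} applied directly to the regular subtriangulation $\tautilden$, then verify that the hypotheses of those references are met in the present setting. By Remark \ref{remark triangular subdecomposition}, $\tautilden$ is a shape-regular conforming triangulation with constants depending only on $\gamma$ and $\widetilde\gamma$, so the purely geometric assumptions needed in \cite{melenk2003hp} are satisfied. The degree-distribution hypothesis required there (slow variation across neighbouring triangles) translates here to $\pT \approx \p_{\T'}$ whenever $\T$ and $\T'$ share at least one vertex, which follows from (\textbf{P1}), from the rule $\pT = \pE$ for $\T \subset \E$, and from \eqref{power vertex}.

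The construction itself is local: for every vertex $\VERT \in \widetilde{\mathcal V}_n$ I would define $\u_\VERT \in \mathbb P_{\pV}(\omega_\VERT)$ as a suitable averaged $L^2$-projection of $\u$ onto $\mathbb P_{\pV}$ on the patch $\omega_\VERT$, setting $\u_\VERT \equiv 0$ whenever $\VERT \in \partial \Omega$ in order to enforce the homogeneous Dirichlet condition. Then
\[
\uM := \sum_{\VERT \in \widetilde{\mathcal V}_n} \u_\VERT \, \varphi_\VERT,
\]
where $\{\varphi_\VERT\}$ is the standard piecewise linear hat-function partition of unity on $\tautilden$. Continuity of $\uM$, membership in $H^1_0(\Omega)$, and the piecewise polynomial structure $\uM \in S^{\ptildebold,0}(\Omega, \tautilden)$ are immediate from this formula; the degree-compatibility across edges of $\tautilden$ is exactly what is encoded in \eqref{space piecewise continuous polynomials} once combined with the slow-variation property above.

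For the quantitative bounds, I would fix $\T \in \tautilden$ and use $\sum_\VERT \varphi_\VERT \equiv 1$ on $\T$ to write $\u - \uM = \sum_\VERT (\u-\u_\VERT)\varphi_\VERT$, with the sum restricted to the three vertices of $\T$. A local estimate of the form
\[
\Vert \u - \u_\VERT \Vert_{0,\omega_\VERT} + \frac{\hT}{\pT} \, \vert \u - \u_\VERT \vert _{1,\omega_\VERT} \lesssim \frac{\hT}{\pT} \, \vert \u \vert _{1,\omega_\VERT}
\]
then follows from standard $hp$ best-approximation on the shape-regular patch $\omega_\VERT$, while $\Vert \varphi_\VERT \Vert _{L^\infty(\T)} \le 1$ and $\Vert \nabla \varphi_\VERT \Vert _{L^\infty(\T)} \lesssim \hT^{-1}$. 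Summing over the bounded number of vertices of $\T$ and using $\pT \approx \pV$ gives \eqref{Clement Melenk estimates a}. The edge bound \eqref{Clement Melenk estimates b} is obtained analogously, either from a one-dimensional $hp$ polynomial approximation estimate on the patch $\omega_\e$, or equivalently from \eqref{Clement Melenk estimates a} combined with a scaled trace inequality on shape-regular triangles, which produces the $(\hT/\pT)^{1/2}$ factor in front of $\vert \u \vert _{1,\omega_\e}$.

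The main obstacle is not the local interpolation theory, which is taken verbatim from \cite{melenk2003hp}, but the bookkeeping needed to verify that $\tautilden$ and its inherited degree vector $\widetilde{\mathbf p}$ satisfy all the uniformity hypotheses of that reference: namely, shape-regularity uniform in $\taun$, and $\p$-comparability uniform across each patch $\omega_\VERT$, $\omega_\T$, $\omega_\e$. Once these two facts are in place through Remark \ref{remark triangular subdecomposition} and (\textbf{P1}) respectively, all the constants become independent of $\hT$ and $\pT$, as required by the statement of the lemma.
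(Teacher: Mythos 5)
Your backbone — invoke the partition-of-unity quasi-interpolant of \cite[Theorem 3.3]{melenk2003hp} on $\tautilden$ and verify its hypotheses (shape regularity of $\tautilden$ via Remark \ref{remark triangular subdecomposition}, slow degree variation via (\textbf{P1}) and the inheritance rules for $\widetilde{\mathbf p}$) — is exactly what the paper does: its proof of this lemma is a direct citation of that theorem, so the hypothesis-checking is the only content and you cover it correctly.

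The explicitly sketched construction, however, does not prove the lemma as written, so it cannot stand on its own. First, the boundary treatment is lossy: setting $\u_{\VERT}\equiv 0$ at boundary vertices and bounding $\Vert \u\Vert_{0,\omega_{\VERT}}$ by a Friedrichs inequality yields only a factor $\hT$, not $\hT/\pT$; indeed an estimate of the form $\Vert \u\Vert_{0,\omega_{\VERT}}\lesssim (\hT/\pT)\vert \u\vert_{1,\omega_{\VERT}}$ is false for a fixed nonzero $\u$ as $\pT\to\infty$. Recovering the full $\hT/\pT$ rate while preserving the homogeneous Dirichlet condition is precisely the delicate part of \cite{melenk2003hp}, where boundary patches are approximated by nonzero polynomials vanishing on the boundary portion (via weighted/one-dimensional arguments), not by the zero polynomial. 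Second, the degree bookkeeping fails: with $\pV$ defined in \eqref{power vertex} as a \emph{maximum} over the patch, the product $\u_{\VERT}\,\varphi_{\VERT}$ has degree $\pV+1$ on the triangles of $\omega_{\VERT}$, which can exceed $\pT$ there; (\textbf{P1}) gives comparability of degrees, which controls constants but not exact membership in $S^{\ptildebold,0}(\Omega,\tautilden)$, whose definition requires $\q_{|_\T}\in\mathbb P_{\pT}(\T)$. One must instead tie the local approximation degree to the \emph{minimum} elemental degree over the patch (reduced by one to absorb the multiplication by the hat function), as in Melenk's construction. The final remark on the edge bound is fine: once \eqref{Clement Melenk estimates a} holds, a scaled trace inequality does produce the $(\hT/\pT)^{1/2}$ factor.
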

\begin{proof}
The proof, based on the partition of unity method \cite{BabuskaMelenk_PUMintro}, can be found in~\cite[Theorem 3.3]{melenk2003hp}.
\end{proof}
Lemma \ref{lemma Clement Melenk} is a key ingredient for the following result which asserts the existence of an $\h\p$ VEM \Clement\, quasi-interpolant.

\begin{prop} \label{proposition VEM Clement}
Let the assumptions (\textbf{D1})-(\textbf{D2}) be valid.
Given $\u$ the solution to \eqref{hp VEM}, a convex polygon $\E \in \taun$ and $\e$ any of its edges,
there exists a function $\uI \in \Vn$, such that its restriction to $\E$ satisfies the following estimates:
\begin{subequations}
\begin{align}
&\Vert \u - \uI \Vert_{0,\E} + \frac{\hE}{\pE} \vert \u - \uI \vert_{1,\E} \le c \frac{\hE}{\pE} \vert \u \vert_{1,\omega_{\E}}, \label{estimates virtual Clement a}\\
&\Vert \u - \uI \Vert_{0,\e} \le c \left( \frac{\hE}{\pE} \right)^{\frac{1}{2}} \vert \u \vert_{1,\omega_{\e}}, \label{estimates virtual Clement b}
\end{align}
\end{subequations}
where $c$ is a positive constant independent on $\u$, $\hE$ and $\pE$ but depending on the shape-regularity of $\taun$ and hence of $\tautilden$, and where $\omega_{\E}$ and $\omega_{\e}$ follow the notation introduced in \eqref{triangular patch around D} and \eqref{triangular patch around e}.
\end{prop}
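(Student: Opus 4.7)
The construction of $\uI$ is a two-stage process. First, let $\uM$ be the Cl\'ement--Melenk piecewise-polynomial interpolant of $\u$ from Lemma~\ref{lemma Clement Melenk} on the subtriangulation $\tautilden$; recall $\uM \in S^{\widetilde{\mathbf p},0}(\Omega,\tautilden)\cap H^1_0(\Omega)$. Second, let $\uI \in \Vn$ be the unique virtual function whose degrees of freedom agree, element by element, with those of $\uM$: the vertex values, the internal Gau\ss--Lobatto edge values, and the internal polynomial moments \eqref{internal moments} of $\uI$ on each polygon equal the corresponding quantities evaluated on $\uM$. Since $\uM$ is continuous and vanishes on $\partial\Omega$, $\uI$ is well defined in $\Vn$.

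A key structural fact is that the subtriangulation from Remark~\ref{remark triangular subdecomposition} introduces no new nodes on $\partial \E$, so every edge $\e$ of $\E$ is also an edge of $\tautilden$ and $\uM|_\e$ is a single polynomial of degree $\pe$. As $\uI|_\e$ is likewise a polynomial of degree $\pe$ and the two agree at the $\pe+1$ boundary DoF nodes, we conclude $\uI|_\e\equiv \uM|_\e$ on every edge of $\E$; in particular the edge estimate \eqref{estimates virtual Clement b} reduces immediately to \eqref{Clement Melenk estimates b}. For the volume estimate, we split $\u-\uI=(\u-\uM)+(\uM-\uI)$ on $\E$ and bound the first contribution by summing \eqref{Clement Melenk estimates a} over the subtriangles of $\E$, using $\hT\approx\hE$, $\pT=\pE$, and the bounded overlap $\omegaT\subset\omegaE$ of the triangular patches.

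The heart of the argument is the control of $\w:=\uM-\uI$ on $\E$. By the previous paragraph $\w$ vanishes on $\partial\E$, and by construction $\int_\E \w\,\m_{\boldalpha}=0$ for all $|\boldalpha|\le \pE-2$. Since $\Delta \uI\in\mathbb P_{\pE-2}(\E)$, integration by parts gives $\int_\E \nabla \uI\cdot\nabla \w =0$; Cauchy--Schwarz yields $|\uI|_{1,\E}\le|\uM|_{1,\E}$, and expanding $|\w|_{1,\E}^2=|\uM|_{1,\E}^2-|\uI|_{1,\E}^2$ produces $|\w|_{1,\E}\le |\uM|_{1,\E}\lesssim |\u|_{1,\omegaE}$, which combined with the triangle inequality gives the $H^1$ part of \eqref{estimates virtual Clement a}. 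For the $L^2$ part, the vanishing moments mean $\Pizpmd \w=0$, hence $\Vert \w\Vert_{0,\E}=\inf_{\q\in\mathbb P_{\pE-2}(\E)}\Vert \w-\q\Vert_{0,\E}$, and a sharp $\h\p$-explicit polynomial approximation estimate on the convex polygon $\E$ of Babu\v ska--Suri type delivers $\Vert \w\Vert_{0,\E}\lesssim(\hE/\pE)|\w|_{1,\E}\lesssim(\hE/\pE)|\u|_{1,\omegaE}$, closing the $L^2$ estimate.

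The principal obstacle is securing the $\h\p$-explicit $L^2$-from-$H^1$ polynomial approximation on the convex polygon $\E$ with the sharp $(\hE/\pE)$ gain; this is where the convexity hypothesis on $\E$ enters, as it permits a clean $\h\p$ approximation (e.g.\ via extension to an enclosing disk). The remaining pieces---the DoF-matching definition of $\uI$, the integration-by-parts identity exploiting $\Delta \uI\in\mathbb P_{\pE-2}(\E)$, and the summation of the Cl\'ement--Melenk bounds over subtriangles---combine in a routine way once this polynomial approximation tool is in place.
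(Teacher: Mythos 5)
Your proposal is correct, but it takes a genuinely different route from the paper's proof. The paper defines $\uI$ elementwise as the solution of the local discrete problem \eqref{definition virtual interpolant} (trace equal to $\uM$ on $\partial \E$ and $\aE(\uI-\uM,w)=0$ for all virtual bubbles $w$), proves the $H^1$ bound through the minimization property of that construction and a comparison with $\Pinabla \u$ via \cite[Lemma 4.2]{hpVEMbasic}, and obtains the $L^2$ bound by an Aubin--Nitsche duality argument with the auxiliary problem \eqref{auxiliary problem}; convexity of $\E$ enters precisely there, through the $H^2$ a priori bound \eqref{a priori bound}, which is why Remark \ref{remark non convex} records a loss for nonconvex elements. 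You instead take $\uI$ to be the DoF interpolant of $\uM$: your Gau\ss--Lobatto counting argument correctly gives $\uI=\uM$ on $\partial\E$ (the subtriangulation adds no boundary nodes and $\uM|_\e\in\mathbb P_{\pe}(\e)$), the orthogonality $\aE(\uI,\uM-\uI)=0$ follows from $\Delta\uI\in\mathbb P_{\pE-2}(\E)$ together with the matched internal moments and $\uM-\uI\in H^1_0(\E)$, and the $L^2$ bound follows from $\Pizpmd(\uM-\uI)=0$ combined with the $\h\p$ best-approximation estimate $\inf_{\q\in\mathbb P_{\pE-2}(\E)}\Vert w-\q\Vert_{0,\E}\lesssim (\hE/\pE)\,\vert w\vert_{1,\E}$. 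Two small remarks: that last estimate needs only star-shapedness (it is essentially the bound from \cite[Lemma 4.2]{hpVEMbasic} invoked for term $II$ in Section \ref{subsection reliability}), so, contrary to your closing comment, convexity is not what this step requires; and for $\pE=1$ the moment space is empty and the best-approximation argument degenerates, but then $\uM-\uI\in H^1_0(\E)$ and the Poincar\'e inequality supplies the factor $\hE\approx\hE/\pE$ directly. The trade-off between the two approaches: the paper's elliptic construction yields the additional orthogonality \eqref{fixing the internal dofs} exploited inside its own proof, whereas your DoF-matching interpolant is simpler, avoids duality altogether, and in fact delivers the full $\hE/\pE$ rate in the $L^2$ bound under (\textbf{D1})--(\textbf{D2}) alone, thereby sidestepping the degradation of Remark \ref{remark non convex} on nonconvex elements.
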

\begin{proof}
Given $\E \in \taun$ convex, we define $\uI \in \VnE$ as the solution to the following Poisson problem with nonhomogeneous Dirichlet boundary conditions
\begin{equation} \label{definition virtual interpolant}
\begin{cases}
\text{find }\uI \in \VnE \text{ s. th.}\\
\uI = \uM \text{ on } \partial \E \\
\aE(\uI, w) = \aE(\uM, w) \quad \forall  w \in \VnE \cap H^1_0(\E),\\
\end{cases}
\end{equation}
where $\uM$ is the quasi-interpolant from~\cite{melenk2003hp} introduced in Lemma~\ref{lemma Clement Melenk}.

We observe that the construction of the quasi-interpolant~$\uM$ is based on the idea of subtriangulation of a polygon,
which in turns can be traced back to the interpolation theory of generalized barycentric coordinates on polygons.
In practice, we fix $\uI$ on the boundary to be the polynomial quasi-interpolant guaranteeing $hp$ approximation properties on triangles
and we fix the internal moments of $\uI$ by demanding
\begin{equation} \label{fixing the internal dofs}
\aE(\uI - \uM , w ) = 0 \quad \Longrightarrow \quad \int_\E \uI \, \Delta w = \int_{\partial \E} \uI \, \partial_\n w - \int_\E \nabla \uM \cdot \nabla w.
\end{equation}
Roughly speaking, this last condition enforces the (distributional) identity $\Delta \uI  = \Delta \uM$ in an approximated sense.
%is equivalent to say that we would like to have $\Delta \uI  = \Delta \uM$; since $\Delta \uM$ is not even in $L^2(\E)$,
%we therefore ask for the weaker requirement \eqref{fixing the internal dofs}.

\medskip

The estimate \eqref{estimates virtual Clement b} is trivially guaranteed by \eqref{Clement Melenk estimates b}.

\medskip

Next, we investigate the bound on the $H^1$ seminorm in \eqref{estimates virtual Clement a}. Given any bubble function $w$ in $\VnE \cap H^1_0(\E)$, we observe that, owing to \eqref{definition virtual interpolant},
\[
\begin{split}
\vert \uM - \uI \vert^2_{1,\E}	& = (\nabla (\uM- \uI), \nabla (\uM- \uI))_{0,\E} \\
					&  = (\nabla (\uM- \uI) - \nabla w, \nabla (\uM- \uI))_{0,\E}\le \vert \uM -(\uI+w)\vert_{1,\E} \vert \uM-\uI \vert_{1,\E}. \\
\end{split}
\]
Therefore, since any function $\wtilde$ in the virtual element space with $\wtilde _{|\partial \E} = \u_{M|\partial \E} = \u_{I|\partial \E}$ can be written as $\uI+w$, one easily gets
\[
\vert \uM - \uI \vert_{1,\E} = \inf_{\wtilde \in \VnE,\, \wtilde_{|_{\partial \E}}= \uM} \vert \uM - \wtilde \vert_{1,\E}.
\]
As a consequence,
\begin{equation} \label{first splitting H1}
\vert  \u - \uI \vert_{1,\E} \le \vert \u - \uM \vert_{1,\E} + \vert \uM - \uI \vert_{1,\E} \le \vert \u - \uM \vert_{1,\E} + \vert \uM -\utildeI \vert_{1,\E},
\end{equation}
where we choose $\utildeI \in \VnE$ satisfying the following local problem:
\[
\begin{cases}
\Delta \utildeI = \Delta \upi & \text{in } \E\\
\utildeI = \uM & \text{on } \partial \E,\\
\end{cases}
\]
being $\upi$ equal to $\Pinabla u$, see \eqref{H1 projector}.

%A minimum energy argument, see e.g. \cite[Proposition 4.2]{VEMchileans} or \cite[Lemma 4.3]{hpVEMbasic}, yields
The Dirichlet principle, see e.g.~\cite{evansPDE}, yields
\begin{equation} \label{second splitting H1}
\vert \utildeI - \upi \vert_{1,\E} \le \vert \upi - \uM \vert_{1,\E} \le \vert \u - \upi \vert_{1,\E} + \vert \u - \uM \vert_{1,\E}.
\end{equation}
Plugging \eqref{second splitting H1} in \eqref{first splitting H1}, we get
\begin{equation} \label{some triangle}
\begin{split}
\vert \u - \uI \vert_{1,\E} 	& \le \vert \u - \uM \vert_{1,\E} + \vert \upi - \uM \vert_{1,\E} + \vert \upi - \utildeI \vert_{1,\E} \le 3 \vert \u - \uM \vert_{1,\E} + 2 \vert \u - \upi \vert_{1,\E},
\end{split}
\end{equation}
whence, applying \eqref{Clement Melenk estimates a} and \cite[Lemma 4.2]{hpVEMbasic} to the first and second terms on the right-hand side of \eqref{some triangle}, respectively,
\[
\vert \u - \uI \vert_{1,\E} \le c \vert \u \vert_{1,\omega_\E}.
\]
We underline that the constant c depends only on the shape-regularity of $\taun$ and hence of $\tautilden$.
\medskip

Finally, we investigate the bound on the $L^2$ norm in \eqref{estimates virtual Clement a}. To this purpose, we employ the hypothesis that $\E$ is convex and we consider the auxiliary problem
\begin{equation} \label{auxiliary problem}
\begin{cases}
\text{find } \psi \in H^1_0(\E) \text{ s. th.}\\
\aE(\psi, w) = (\uM - \uI, w)_{0,\E} \quad \forall w \in H^1_0(\E).\\
\end{cases}
\end{equation}
The following standard a priori bound on the solution to the auxiliary problem \eqref{auxiliary problem} for convex $\E$ is valid:
\begin{equation} \label{a priori bound}
\Vert \psi \Vert_{2,\E} \lesssim \Vert \uM - \uI \Vert_ {0,\E}.
\end{equation}
Owing to the fact that ${\uM-\uI} _{| _{\partial \E}} = 0$ and to the orthogonality of $\uM - \uI$ with respect to virtual bubble functions \eqref{fixing the internal dofs}, we have
\begin{equation} \label{L2 becomes H1}
\Vert \uM -\uI \Vert^2_{0,\E} = (\nabla \psi, \nabla(\uM-\uI))_{0,\E} = (\nabla(\psi - \psiII), \nabla(\uM - \uI))_{0,\E},
\end{equation}
where $\psiII$ is a function in $\VnE\cap H^1_0(\E)$ providing optimal $\h\p$ approximation estimates of $\psi$, see e.g. \cite[Lemma 4.3]{hpVEMbasic},
\begin{equation} \label{hp approx virtual interpolant}
\vert \psi - \psiII \vert_{1,\E} \le c \frac{\hE}{\pE} \Vert \psi \Vert_{2,\E},
\end{equation}
being $c$ a positive constant depending only on the shape-regularity of $\E$.

We deduce, plugging \eqref{hp approx virtual interpolant} into \eqref{L2 becomes H1},
\[
\Vert \uM - \uI \Vert^2_{0,\E} \le c \frac{\hE}{\pE} \Vert \psi \Vert_{2,\E} \vert \uM - \uI \vert_{1,\E},
\]
where $c$ is the same constant as in \eqref{hp approx virtual interpolant}.
%
%\[
%\Vert \uM - \uI \Vert_{0,\E}^2 = (\nabla \psi, \nabla (\uM - \uI))_{0,\E} = (\nabla (\psi - \psiI), \nabla (\uM-\uI))_{0,\E}.
%\]
%Let now $T$ be the extension operator constructed in \cite[Chapter VI, Theorem 5]{steinsingularintegrals}. We deduce:
%\[
%\begin{split}
%\Vert \uM - \uI \Vert_{0,\E}^2 	& = (\nabla (T(\psi) - (T(\psi))_I)  ,  \nabla (\uM - \uI))_{0,\E} \\
%					& \lesssim \frac{\hE}{\pE} \vert T(\psi) \vert_{2,\omegaE} \vert \uM - \uI \vert_{1,\E} \lesssim \frac{\hE}{\pE} \vert \psi \vert_{2,\E} \vert \uM - \uI \vert_{1,\E},\\
%\end{split}
%\]
%where in the last inequality we used the properties of the extension operator $T$.

Recalling \eqref{a priori bound}, we obtain
\[
\Vert \uM - \uI \Vert_{0,\E} \lesssim \frac{\hE}{\pE} \vert \uM - \uI \vert_{1,\E}.
\]
The assertion follows from a triangle inequality and the bound on the $H^1$ seminorm in \eqref{estimates virtual Clement a}.
\end{proof}

\begin{remark} \label{remark non convex}
If element $\E$ is nonconvex, then the internal $L^2$ bound of Proposition \ref{proposition VEM Clement} modifies to
\[
\Vert \u - \uI \Vert_{0,\E} \le c \left( \frac{\hE}{\pE}  \right)^{\frac{\pi}{\alpha_\E} - \varepsilon} \vert \u \vert_{1, \omegaE},
\]
for all $\varepsilon > 0 $ arbitrarily small, where $\alpha_\E$ denotes the largest interior angle of $\E$.
\end{remark}

%%%%%%%%%%%%%%%%%%%%%%%%%%%%%%%%%%%%%%%%%%%%%%%%%%%%%%%%%%%%%%%%%%%%%%%%%%%
\subsection{Inverse estimates on triangles and polygons} \label{subsection inverse estimates on triangles and polygons}
In this section, we recall two $\h\p$ polynomial inverse inequalities that will be instrumental in the forthcoming analysis.

We begin with the following 1D result.
\begin{lem} \label{lemma 1D hp inverse}
Given $\widehat I = [-1,1]$, given $\psi$ the quadratic bubble function over $\widehat I$ and given $0\le \alpha_1 \le \alpha_2$, there exists a positive constant such that, for all $\q\in \mathbb P_\p(\widehat I)$, $\p \in \mathbb N$,
\[
\int_{\widehat I} \psi ^{\alpha_1} \q^2   \le c (\p+1) ^{2(\alpha_2-\alpha_1)} \int_{\widehat I} \psi^{\alpha_2} \q^2.
\]
\end{lem}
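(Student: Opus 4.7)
The plan is to split the reference interval $\widehat I = [-1,1]$ into a ``bulk'' part where the weight $\psi(x)=1-x^2$ is bounded uniformly away from zero, and two thin ``boundary strips'' near the endpoints $\pm 1$ where $\psi$ degenerates, and then to treat the two contributions to $\int_{\widehat I}\psi^{\alpha_1}\q^2$ separately. The splitting scale is dictated by the fact that $(\p+1)^{-2}$ is the critical length at which polynomials of degree $\p$ can first resolve the decay of $\psi$. Accordingly, I would set $\delta := \kappa/(\p+1)^2$ for a universal constant $\kappa > 0$ to be fixed later, and let $I_B := [-1+\delta, 1-\delta]$, $I_\partial := \widehat I \setminus I_B$.

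On $I_B$ one has $\psi(x)=(1-x)(1+x)\ge \delta$, so pointwise
\[
\psi^{\alpha_1}(x) = \psi^{\alpha_1-\alpha_2}(x)\,\psi^{\alpha_2}(x) \le \delta^{-(\alpha_2-\alpha_1)}\psi^{\alpha_2}(x) = \kappa^{-(\alpha_2-\alpha_1)}(\p+1)^{2(\alpha_2-\alpha_1)}\psi^{\alpha_2}(x),
\]
and integrating over $I_B$ gives the bulk contribution to the estimate with the sharp exponent. On the right boundary strip $[1-\delta,1]$ (the left one being symmetric) the weight satisfies $\psi^{\alpha_1}(x)\le (2\delta)^{\alpha_1}$, so the bound reduces to
\[
\int_{1-\delta}^{1}\psi^{\alpha_1}\q^2\,dx \le (2\delta)^{\alpha_1}\,\delta\,\|\q\|_{L^\infty([1-\delta,1])}^2.
\]
At this point I would invoke the Markov--Bernstein-type pointwise bound
\[
\|\q\|_{L^\infty([-1,1])}^2 \le C(\p+1)^{2\alpha_2+2}\int_{\widehat I}\psi^{\alpha_2}\q^2\,dx \qquad \forall\,\q\in\mathbb P_\p,
\]
valid for every $\alpha_2\ge 0$: it follows either from the classical asymptotics of the Christoffel function for the Jacobi weight $\psi^{\alpha_2}$ (its minimum occurring precisely at $\pm 1$ with magnitude $\sim (\p+1)^{-2\alpha_2-2}$) or, more self-containedly, from an affine rescaling reducing matters to the unweighted Markov--Bernstein inequality on a reference strip of length $(\p+1)^{-2}$. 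Plugging $\delta=\kappa/(\p+1)^2$ into the boundary estimate then yields
\[
\int_{1-\delta}^{1}\psi^{\alpha_1}\q^2\,dx \le C\delta^{\alpha_1+1}(\p+1)^{2\alpha_2+2}\int_{\widehat I}\psi^{\alpha_2}\q^2\,dx = C(\p+1)^{2(\alpha_2-\alpha_1)}\int_{\widehat I}\psi^{\alpha_2}\q^2\,dx,
\]
and summing the bulk and boundary contributions completes the argument.

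I expect the delicate step to be the weighted pointwise Markov--Bernstein bound with the sharp exponent $2\alpha_2+2$: for integer $\alpha_2$ it can be obtained by iterating the unweighted estimate $\|\q\|_\infty^2\le C(\p+1)^2\|\q\|_{L^2}^2$, but for general real $\alpha_2\ge 0$ the sharp dependence on $\p$ genuinely relies on Jacobi polynomial theory. Everything else in the argument is bookkeeping to fix the scale $\delta$ so that the weight-decay factor $\delta^{\alpha_1+1}$ and the Markov--Bernstein factor $(\p+1)^{2\alpha_2+2}$ combine into exactly $(\p+1)^{2(\alpha_2-\alpha_1)}$; any other choice of $\delta$ would waste a power of $\p+1$ either in the bulk or at the boundary. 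As a cleaner alternative, sidestepping the non-integer weighted Markov--Bernstein inequality altogether, one can perform the substitution $x=\cos\theta$, under which $\psi=\sin^2\theta$ and $\q(\cos\theta)$ becomes an even trigonometric polynomial of degree $\p$; the statement then reduces to a classical one-dimensional weighted Bernstein inequality for cosine polynomials on $[0,\pi]$, which is the route typically followed in the $\h\p$-FEM literature.
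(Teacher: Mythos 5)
Your argument is essentially correct, but it follows a genuinely different route from the paper: the paper does not prove this lemma at all, it simply cites \cite[Lemma~4]{bernardi2001error}, so a weighted polynomial inverse inequality of Bernardi--Dauge--Maday type is taken off the shelf. Your blind proof replaces that citation with a direct two-scale argument: the splitting of $\widehat I$ at the critical distance $\delta\simeq(\p+1)^{-2}$ from the endpoints is the right one, the bulk estimate $\psi^{\alpha_1}\le\delta^{-(\alpha_2-\alpha_1)}\psi^{\alpha_2}$ is immediate, and the boundary strips are correctly reduced to the weighted Nikolskii-type bound $\Vert \q\Vert_{\infty,\widehat I}^2\lesssim(\p+1)^{2\alpha_2+2}\int_{\widehat I}\psi^{\alpha_2}\q^2$, whose combination with the factor $\delta^{\alpha_1+1}$ gives exactly the exponent $2(\alpha_2-\alpha_1)$. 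That pointwise bound is indeed classical (it is the statement that the Christoffel function of the Jacobi weight $(1-x^2)^{\alpha_2}$ is of order $(\p+1)^{-2\alpha_2-2}$ at $\pm1$, where it is smallest), so the proof is sound, and you correctly identify it as the only nontrivial ingredient. One caveat: the ``more self-contained'' variant you sketch, an affine rescaling to a strip of length $(\p+1)^{-2}$ plus the unweighted Markov--Bernstein inequality, does not work as stated, because the weight degenerates precisely on that strip; to make it elementary you additionally need a Remez-type comparison $\Vert \q\Vert_{\infty,\widehat I}\lesssim\Vert \q\Vert_{\infty,[-1+c\p^{-2},1-c\p^{-2}]}$ before applying the unweighted inequality on the inner interval. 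The trade-off between the two approaches is clear: the paper's citation is shorter and inherits the sharp constant from the spectral-methods literature, while your argument makes transparent where the factor $(\p+1)^{2(\alpha_2-\alpha_1)}$ comes from (bulk versus endpoint behaviour of the bubble) at the price of either invoking Jacobi/Christoffel asymptotics or the $x=\cos\theta$ reduction to weighted trigonometric Bernstein inequalities, which is in fact the route taken in the cited reference.
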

\begin{proof}
%See \cite[Lemma B.2.1]{MascottoPhDthesis}.
See \cite[Lemma 4]{bernardi2001error}.
\end{proof}
Given~$\E \in \taun$, we define a piecewise bubble function~$\psi_{\E}$ over~$\tautilden(\E)$ as follows:
\begin{equation}  \label{polygonal cubic bubble function}
\psi_{\E|_\T} = \psi_\T,\quad \text{where } \psi_\T \text{ is the cubic bubble function on triangle $\T$}\text{ for all } \T \in \tautilden(\E),
\end{equation}
where we recall that $\tautilden(\E)$ is the subtriangulation of $\E$ introduced in Remark \ref{remark triangular subdecomposition}.

The following $\h\p$ polynomial inverse inequality over a polygon holds true.
\begin{lem} \label{thorem inverse estimates on a polygon}
Given~$\E$ a polygon in~$\taun$, let~$\psi _\E$ be the piecewise bubble function associated with the subtriangulation~$\tautilden (\E)$ of~$\E$ defined in~\eqref{polygonal cubic bubble function}.
For all $-1 < \alpha \le \beta$, there exist a positive constants $c$ depending only on $\alpha$, $\beta$, and the shape-regularity of the subtriangulation $\tautilden(\E)$, such that, for all $\q\in \mathbb P_{\pE}(\E)$,
%\begin{equation} \label{inverse estimates on polygon a}
%\int_{\E} \psi_ {\E} \vert \nabla \q \vert^2 \le c_1 \frac{(\p_{\E}+1)^2}{\hE^2} \int_{\E}  \vert \q \vert^2,
%\end{equation}
\begin{equation} \label{inverse estimates on polygon b}
\int_{\E} \psi_{\E}^{\alpha} \vert \q \vert^2 \le c_2 (\p_{\E}+1)^{2(\beta-\alpha)} \int_{\E} \psi_{\E}^\beta \vert \q \vert^2.
\end{equation}
%\begin{equation} \label{inverse estimates on polygon c}
%\int_{\E} \psi_{\E}^{2\delta} \vert \nabla \q \vert^2 \le c_3 \frac{(\p_{\E}+1)^{2(2-\delta)}}{\hE^2} \int_{\E} \psi_{\E}^\delta \vert \q\vert^2.
%\end{equation}
\end{lem}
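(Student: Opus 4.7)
The plan is to reduce the polygonal inverse estimate to the analogous estimate on each triangle of the subtriangulation $\tautilden(\E)$, and then to reduce the triangular estimate to the one-dimensional Lemma~\ref{lemma 1D hp inverse} via an affine map to a reference triangle followed by a Duffy-type tensorization argument.

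First, by the piecewise definition of $\psi_\E$ in \eqref{polygonal cubic bubble function}, both sides of the claimed inequality split additively over $\tautilden(\E)$:
\begin{equation*}
\int_\E \psi_\E^\alpha |\q|^2 = \sum_{\T \in \tautilden(\E)} \int_\T \psi_\T^\alpha |\q|^2,
\qquad
\int_\E \psi_\E^\beta |\q|^2 = \sum_{\T \in \tautilden(\E)} \int_\T \psi_\T^\beta |\q|^2.
\end{equation*}
Since $\q \in \mathbb P_{\pE}(\E)$ restricts on each $\T$ to an element of $\mathbb P_{\pE}(\T)$, it suffices to establish, on a single triangle $\T$ with cubic bubble $\psi_\T$, the local inequality
\begin{equation*}
\int_\T \psi_\T^\alpha |\q|^2 \le c\,(\pE + 1)^{2(\beta-\alpha)} \int_\T \psi_\T^\beta |\q|^2
\qquad \forall \q \in \mathbb P_{\pE}(\T),
\end{equation*}
with $c$ depending only on $\alpha,\beta$ and the shape-regularity of $\T$; summation over $\T$ and the bound on the cardinality of $\tautilden(\E)$ (which is controlled by (\textbf{D1})--(\textbf{D2})) then yields \eqref{inverse estimates on polygon b}.

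For the triangular estimate, I would first map $\T$ affinely to a reference triangle $\widehat \T$; the Jacobian is a constant comparable to $\hT^2$, the bubble transforms into the reference cubic bubble, and the polynomial degree is preserved, so only the constants are affected. On $\widehat \T$ one applies the Duffy transformation $\widehat \T \to \widehat Q = [-1,1]^2$ which collapses one vertex to an edge: the reference bubble $\widehat \psi$ factorizes (up to harmless smooth weights bounded above and below) as a product of three one-dimensional bubbles of the form $(1-\xi^2)$ in the two Duffy coordinates, and any $\q \in \mathbb P_{\pE}(\widehat \T)$ pulls back to a polynomial of degree $\le \pE$ in each coordinate separately. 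Applying Lemma~\ref{lemma 1D hp inverse} first in $\xi_1$ with the remaining variable frozen, then in $\xi_2$, produces the factor $(\pE+1)^{2(\beta-\alpha)}$ and concludes the argument.

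The main technical obstacle will be the bookkeeping for the Jacobian of the Duffy map: this Jacobian is itself a non-constant weight that behaves like a power of one of the bubble factors, and one must absorb it into $\alpha$ and $\beta$ carefully so that the intermediate integrals remain finite. This is precisely where the restriction $\alpha > -1$ is used, since it guarantees that the one-dimensional integrals $\int_{-1}^1 \psi^{\alpha}\,|\q|^2$ employed via Lemma~\ref{lemma 1D hp inverse} are well defined and the resulting constants depend monotonically only on $\alpha$, $\beta$, and the reference geometry.
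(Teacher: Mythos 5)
Your outer reduction -- splitting both integrals over the subtriangulation $\tautilden(\E)$ and proving a weighted inverse estimate triangle by triangle -- matches the structure of the result the paper actually invokes (the paper gives no argument of its own: it cites Theorem B.3.2 of the thesis \cite{MascottoPhDthesis}, which in turn rests on the two--dimensional weighted polynomial inverse estimates of Melenk--Wohlmuth type, not on a Duffy tensorization). The gap is in your inner, triangular step. Under the Duffy map the reference bubble becomes $\xi(1-\xi)\,\eta(1-\eta)^2$ and the Jacobian contributes an extra factor $(1-\eta)$; applying a one--dimensional inverse estimate in $\xi$ and then in $\eta$ does \emph{not} ``produce the factor $(\pE+1)^{2(\beta-\alpha)}$'': each direction contributes its own factor and the two factors multiply. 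Concretely, the $\xi$-direction costs $(\pE+1)^{2(\beta-\alpha)}$, while in the $\eta$-direction the weight is a nonsymmetric Jacobi weight whose exponents differ by $\beta-\alpha$ at $\eta=0$ but by $2(\beta-\alpha)$ at the collapsed endpoint $\eta=1$; Lemma~\ref{lemma 1D hp inverse} is stated only for the symmetric bubble and for nonnegative exponents (so it also does not cover $-1<\alpha<0$, which the statement allows), and its Jacobi-weight generalization costs $(\pE+1)^{4(\beta-\alpha)}$ in that direction. Altogether your sequential argument yields at best a power of order $(\pE+1)^{4(\beta-\alpha)}$ (indeed worse if the Jacobian is handled naively), not the claimed $(\pE+1)^{2(\beta-\alpha)}$.

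This is not a bookkeeping issue that a more careful absorption of the Jacobian can repair: the loss of the tensorized route is intrinsic. Take $\q(x,y)=r(x)r(y)$ with $r\in\mathbb P_m$ an extremal polynomial for the one--dimensional estimate (essentially a Christoffel--Darboux kernel concentrated at distance of order $m^{-2}$ from an endpoint) and let it concentrate at a vertex of one subtriangle, where the piecewise bubble $\psi_\E$ vanishes to \emph{second} order (product of two linear factors); then the ratio $\int\psi_\E^{\alpha}\q^2/\int\psi_\E^{\beta}\q^2$ behaves like $m^{4(\beta-\alpha)}$ while $\deg \q=2m$. Hence no coordinate-wise application of one--dimensional estimates can reach the exponent $2(\beta-\alpha)$, and your proposal as written does not prove the inequality in the stated form; to obtain the lemma as it is used in Section~\ref{subsection efficiency} you would have to follow the genuinely two--dimensional argument of the quoted reference rather than the Duffy--tensorization sketched here, or else track the larger exponent produced by your argument through the places where the lemma is applied.
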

\begin{proof}
See \cite[Theorem B.3.2]{MascottoPhDthesis}.
\end{proof}

%%%%%%%%%%%%%%%%%%%%%%%%%%%%%%%%%%%%%%%%%%%%%%%%%%%%%%%%%%%%%%%%%%%%%%%%%%%
\subsection{A lifting operator} \label{subsection a lifting operator}
In this section, we recall the existence of a lifting operator from an edge of a triangle to its interior.
\begin{lem} \label{lemma lifting operator}
Given $\T \in \tautilden$ a triangle and $\e$ any of its edges and given $1/2 < \alpha \le 2$, let $\psi_{\e}$ be the quadratic bubble function associated with edge $\e$.
Then, for every $\q \in \mathbb P_{\p_{\e}}(\e)$ and for every $\varepsilon >0$, there exist a lifting $E(\q) \in H^1(\T)$, such that
\begin{equation} \label{estimates on extension operator a}
E(\q) _{|_{\e}} = \q \, \psi_{\e} ^\alpha,\quad \quad \quad E(\q) _{|_{\partial \T \setminus \e}} = 0,\\
\end{equation}
\begin{equation} \label{estimates on extension operator b}
\Vert E(\q) \Vert^2_{0,\T} \le c_\alpha \hT \varepsilon \Vert \q \psi_{\e}^{\frac{\alpha}{2}} \Vert_{0,\e}^2,\\
\end{equation}
\begin{equation} \label{estimates on extension operator c}
\vert E(\q) \vert^2_{1,\T} \le c_\alpha \hT^{-1} (\varepsilon \p_{\e}^{2(2-\alpha)} + \varepsilon ^{-1}) \Vert \q \psi_{\e}^{\frac{\alpha}{2}} \Vert_{0,\e}^2,\\
\end{equation}
where $c_\alpha$ is a positive constant depending only on $\alpha$ and on the shape of $\T$.
\end{lem}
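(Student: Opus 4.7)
The plan is to reduce to a shape-regular reference triangle via an affine change of variables, construct an explicit product-type extension there, and verify the three estimates by direct computation combined with the one-dimensional $\h\p$ inverse inequality of Lemma~\ref{lemma 1D hp inverse}. By assumptions (\textbf{D1})--(\textbf{D2}) and Remark~\ref{remark triangular subdecomposition}, the affine map from $\T$ to a unit reference triangle $\widehat{\T}$ with reference edge $\widehat{\e}$ has Jacobian of order $\hT$ with constants depending only on shape regularity, so $L^2$-norms rescale by $\hT$ and $H^1$-seminorms by $\hT^{-1}$. On $\widehat{\T}$ I would introduce coordinates $(s,t)$ with $s$ parametrising $\widehat{\e}$ and $t$ the transverse distance from $\widehat{\e}$, and define
$$
\widehat E(\q)(s,t) \;=\; \q(s)\,\psi_{\widehat{\e}}(s)^{\alpha}\,\eta\!\left(\frac{t}{\delta}\right),
$$
where $\delta=\varepsilon$ and $\eta$ is a smooth cutoff with $\eta(0)=1$ and $\eta(r)=0$ for $r\ge 1$. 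The trace on $\widehat{\e}$ equals $\q\,\psi_{\widehat{\e}}^{\alpha}$ by construction, and the vanishing of $\psi_{\widehat{\e}}$ at the endpoints of $\widehat{\e}$ combined with a suitable modification of $\eta$ near those corners (or, equivalently, a product of cutoffs built from the two transverse barycentric coordinates) ensures $\widehat E(\q)\equiv 0$ on $\partial\widehat{\T}\setminus\widehat{\e}$.

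For the $L^2$ bound, direct integration in $t$ and the trivial inequality $\psi_{\widehat{\e}}^{2\alpha}\le\psi_{\widehat{\e}}^{\alpha}$ (valid since $\psi_{\widehat{\e}}\le 1$ and $\alpha>0$) yield $\|\widehat E(\q)\|_{0,\widehat{\T}}^{2}\lesssim \delta\,\|\q\,\psi_{\widehat{\e}}^{\alpha/2}\|_{0,\widehat{\e}}^{2}$. For the $H^1$ seminorm I would split the gradient into its normal and tangential components: the $\partial_t$-part contributes $\delta^{-1}\,\|\q\,\psi_{\widehat{\e}}^{\alpha/2}\|_{0,\widehat{\e}}^{2}$, whereas the $\partial_s$-part equals
$$
\delta\int_{\widehat{\e}}\Bigl(\q'\,\psi_{\widehat{\e}}^{\alpha}+\alpha\,\q\,\psi_{\widehat{\e}}^{\alpha-1}\,\psi_{\widehat{\e}}'\Bigr)^{2}\,ds.
$$
The dangerous piece carries the weight $\psi_{\widehat{\e}}^{2(\alpha-1)}$: Lemma~\ref{lemma 1D hp inverse} applied with $\alpha_1=2(\alpha-1)$ and $\alpha_2=\alpha$, compatible with the range $1/2<\alpha\le 2$, produces the sharp factor $(\pe+1)^{2(\alpha-2(\alpha-1))}=(\pe+1)^{2(2-\alpha)}$. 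The complementary term $\q'\,\psi_{\widehat{\e}}^{\alpha}$ is handled by a weighted Markov-type inequality giving a lower power of $\pe$, and is therefore subsumed. Choosing $\delta=\varepsilon$ and pulling everything back to $\T$ via the affine scaling produces exactly \eqref{estimates on extension operator a}--\eqref{estimates on extension operator c}.

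The main obstacle is reconciling two competing demands on the cutoff: it has to decay on the transverse scale $\delta\sim\varepsilon$ so as to yield the $\varepsilon$ factor in the $L^2$ bound and the $\varepsilon^{-1}$ factor in the $H^1$ bound, and at the same time it must respect the triangular geometry by vanishing on the two remaining edges of $\widehat{\T}$. Once this geometric issue is settled, the rest of the argument is a careful application of Lemma~\ref{lemma 1D hp inverse}, in which the exponent $2(2-\alpha)$ emerges precisely as twice the spread between the weights $\psi_{\widehat{\e}}^{\alpha}$ and $\psi_{\widehat{\e}}^{2(\alpha-1)}$, and the restriction $\alpha>1/2$ is dictated by the integrability of the lower weight at the endpoints of $\widehat{\e}$.
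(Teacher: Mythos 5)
The paper does not prove this lemma from scratch: its ``proof'' is the citation of \cite[Lemma 2.6]{MelenkWohlmuth_hpFEMaposteriori} plus an affine scaling argument. Your scaling step is essentially that argument (note only that in 2D the $H^1$ seminorm is scale invariant and the factors $\hT$, $\hT^{-1}$ come from comparing the two-dimensional and one-dimensional $L^2$ scalings), so the substance of your proposal is the attempt to replace the cited construction by an explicit tensor-product lifting, and there the argument has genuine gaps. First, the formula $\q(s)\,\psi_{\e}(s)^{\alpha}\,\eta(t/\delta)$ does not vanish on $\partial\T\setminus\e$: near the two endpoints of $\e$ the cutoff is still active while $\psi_{\e}^{\alpha}$ is only small, and the fix you gesture at (modifying $\eta$ near the corners, or using barycentric cutoffs) is exactly the delicate part of the construction, since any such modification creates additional derivative contributions precisely where the weights $\psi_{\e}^{\alpha-1}$, $\psi_{\e}^{2\alpha-2}$ degenerate; it cannot be left implicit. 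Second, your key step invokes Lemma \ref{lemma 1D hp inverse} with $\alpha_1=2(\alpha-1)$, $\alpha_2=\alpha$, but that lemma requires $0\le\alpha_1$, so it does not cover $1/2<\alpha<1$; the inequality you need for $-1<\alpha_1<0$ is true but is not the statement you are quoting.

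Third, and most seriously, the term $\q'\psi_{\e}^{\alpha}$ is not ``subsumed''. The sharp weighted Markov--Bernstein behaviour is $\int_{\e}(\q')^2\psi_{\e}^{2\alpha}\approx \pe^{2}\int_{\e}\q^2\psi_{\e}^{\alpha}$, attained for instance by Legendre polynomials: with $\q=P_{\p}$ one has $\int_{-1}^{1}(P_{\p}')^2(1-x^2)^{2\alpha}\,dx\approx c\,\p$ while $\int_{-1}^{1}P_{\p}^2(1-x^2)^{\alpha}\,dx\approx c/\p$. Hence for $1<\alpha\le 2$ the tangential contribution of your ansatz is of order $\varepsilon\,\pe^{2}$, strictly larger than the claimed $\varepsilon\,\pe^{2(2-\alpha)}$, and no choice of the transverse cutoff width repairs this, because the loss occurs already in the one-dimensional derivative of the trace $\q\psi_{\e}^{\alpha}$. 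So your construction proves at best the weaker bound with $\varepsilon\,\pe^{2}+\varepsilon^{-1}$ in \eqref{estimates on extension operator c}. That weaker form would in fact suffice for the way the lemma is used in Section \ref{subsection bounding the edge residual} (there $\varepsilon=\pe^{-2}$ and the $\varepsilon^{-1}$ term dominates), but as a proof of the statement as written, with the exponent $2(2-\alpha)$ on the whole range $\alpha\in(1/2,2]$, the proposal is incomplete; recovering that exponent is precisely what the nontrivial construction of \cite[Lemma 2.6]{MelenkWohlmuth_hpFEMaposteriori}, to which the paper defers, is designed for.
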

\begin{proof}
The proof follows from \cite[Lemma 2.6]{MelenkWohlmuth_hpFEMaposteriori} and a scaling argument.
\end{proof}
Note that, since $E(\q)$ vanishes on $\partial \T \setminus \e$, then it can be extended to~$0$ on the remaining part of the polygon~$\E$ containing~$\T$ as part of its subtriangulation whenever~$\e$ lies on the boundary of~$\E$.
As a consequence, \eqref{estimates on extension operator b} and~\eqref{estimates on extension operator c} can be ``generalized'' in a straightforward way,
employing on the left-hand side (semi)norms on the complete polygon \emph{in lieu} of their counterparts on triangle $\T$.
%same (semi)norms on $\T$.

%%%%%%%%%%%%%%%%%%%%%%%%%%%%%%%%%%%%%%%%%%%%%%%%%%%%%%%%%%%%%%%%%%%%%%%%%%%
% ----------------------------------------------------------------------------------------------------------------------------------------------------------------------------------------------------------
%%%%%%%%%%%%%%%%%%%%%%%%%%%%%%%%%%%%%%%%%%%%%%%%%%%%%%%%%%%%%%%%%%%%%%%%%%%
\section{A posteriori error analysis} \label{section a posteriori error analysis}
In this section, we build an error estimator and we prove that it can be upper and lower bounded by the $H^1$ error of the method, with constants explicit in terms of the discretization parameters.

The remainder of the the section is structured as follows.
In Section \ref{subsection the residual equation}, we write the residual equation, whereas,
in Section \ref{subsection reliability}, we construct an error estimator and we show that the energy error can be bounded in terms of such an estimator
with an explicit dependence in terms of the discretization parameters.
Next, in Section \ref{subsection efficiency}, we show that the \emph{local} estimator can be bounded by the $H^1$ seminorm of the \emph{quasi-local} error
plus a couple of terms involving the oscillation of the right-hand side of the problem \eqref{strong Poisson problem} and the stabilization of the method.
Finally, in Section \ref{subsection conclusions}, we summarize the foregoing results.% and we compare them with the analogous results shown in the $\h\p$-FEM framework.

Henceforth, we assume for the sake of clarity that all the polygons in $\taun$ are convex. The nonconvex case is discussed in Remark \ref{remark theorem non convex}.
%%
% \red{Nonetheless, we can assume without loss of generality that all the elements are convex; in fact, it would suffice to decompose the nonconvex elements into the union of convex polygons, e.g. by subtriangulating them.}
%%
Furthermore, we also assume in the forthcoming analysis that $\pE \ge 2$ for all $\E\in \taun$; this allows us to avoid a cumbersome notation when treating the discrete right-hand side, cf. Section \ref{section discrete right-hand side}.
Thanks to this assumption, we will write $\langle \fn, \vn \rangle_n$ as $(\fn, \vn)_{0,\Omega}$, where $\f_{n|_{\E}} = \Pizpmd \f$ for all $\E\in \taun$.

Finally, we set the jump across an edge $\e\in \mathcal E_n$ as
\begin{equation} \label{jump}
\llbracket \vv \rrbracket_\e =
\begin{cases}
\vv & \text{if }\e \in \mathcal E_n^b\\
\vv^+ - \vv^- & \text{otherwise},\\
\end{cases}
\end{equation}
where $\vv^+$ and $\vv^-$ are the restriction (assumed sufficiently regular) of $\vv$ over $\partial \E^+$ and $\partial \E^-$, respectively, and where $\e \subseteq \partial \E^+ \cap \partial \E^-$.
Note that we are assuming that the unit normal $\n$ is pointing outside $\E^+$ and inside $\E^-$.

%%%%%%%%%%%%%%%%%%%%%%%%%%%%
\subsection{The residual equation} \label{subsection the residual equation}
We begin by introducing the residual equation. Let $e := \u - \un$ be the difference between the solution to the continuous \eqref{Poisson problem homogeneous Dirichlet weak formulation} and discrete \eqref{hp VEM} problems, respectively.
%In order to avoid a heavy notation regarding the oscillation of the right-hand side we denote with an abuse of notation $\langle \fn , \vn \rangle_n$ by $(\fn, \vn)_{0,\Omega}$ where here $\fn$ denotes $\Pizpmd \f$,
%$\Pizpmd$ being the $L^2$ projector defined in \eqref{L2 projection}.
One has, for any $\vv \in H^1_0(\Omega)$ and $\chin \in \Vn$,
\begin{equation} \label{residual equation}
\begin{split}
\a(e,\vv)	& = (\f,\vv)_{0,\Omega} - a(\un, \vv) = (\f,\vv)_{0,\Omega} - \a(\un, \chin) - \a(\un,\vv-\chin)\\
		& = (\f,\vv) _{0,\Omega} - (\fn,\chin)_{0,\Omega} + \an (\un,\chin) - \a(\un,\chin) - \a(\un,\vv-\chin)\\
		& = (\f-\fn, \chin)_{0,\Omega} + (\f, \vv-\chin)_{0,\Omega} + \an(\un,\chin) - \a(\un,\chin) - \a(\un,\vv-\chin).\\
\end{split}
\end{equation}
We rewrite the last term on the right-hand side of \eqref{residual equation} by observing that, for all $\w \in H^1_0(\Omega)$, the following holds true:
\begin{equation} \label{first integration by parts}
\begin{split}
\a(\un,\w)	& = \sum_{\E \in \taun} \aE(\un, \w) = \sum_{\E \in \taun} \left\{ \aE(\PinablapuE \un, \w) + \aE((I - \PinablapuE )\un, \w)  \right\}\\
		& = \sum_{\E\in \taun} \left\{ -(\Delta \PinablapuE \un, \w)_{0,\E} + \aE((I-\PinablapuE) \un, \w)  \right\} + \sum_{\e \in \EnI} \left( \lllbracket \partial _\n \PinablapuE \un \rrrbracket_\e,\w \right)_{0,\e},
\end{split}
\end{equation}
where we recall that the jump $\llbracket \cdot \rrbracket_\e$ is defined in \eqref{jump} and where we recall that we are assuming that the unit normal $\n$ of each edge is fixed once and for all.
Note that~$\EnI$ represents the set of edges of~$\En$ not lying on~$\partial \Omega$.

We highlight the presence of a polynomial projector in \eqref{first integration by parts}; without such a projector we would not be able to compute exactly the jump across the edges
of normal derivatives of functions in the virtual element space, which we anticipate will be part of the error residual of the method.

Plugging \eqref{first integration by parts} in \eqref{residual equation} with $\w = \vv - \chin$, we deduce
\[
\begin{split}
a(e,\vv) 	& = \sum_{\E \in \taun}  \left\{  {(\Delta \PinablapuE \un + \fn, \vv -\chin )_{0,\E}} - \aE((I-\PinablapuE) \un, \vv- \chin ) + {(\f-\fn, \vv- \chin)_{0,\E}} \right\}\\
		& \quad\quad\quad+ {(\f- \fn, \chin)_{0,\Omega}} + {\an(\un, \chin) - \a(\un, \chin)} - \sum_{\e \in \EnI} {\left( \lllbracket \partial _\n \PinablapuE \un \rrrbracket, \vv- \chin  \right)_{0,\e}},\\
\end{split}
\]
whence
\begin{equation} \label{equazione da cui tutto parte}
\begin{split}
a(e,\vv) 	& = \sum _{\E \in \taun}  \left\{  {(\Delta \PinablapuE \un + \fn, \vv -\chin )_{0,\E}} + {(\f-\fn, \vv)_{0,\E}} - { \aE((I-\PinablapuE) \un, \vv- \chin )} \right.\\
		& \quad \quad\quad \quad\left.+ \anE (\un, \chin) - \aE(\un, \chin) \right\} - \sum_{\e \in \EnI} {\left( \lllbracket \partial _\n \PinablapuE \un  \rrrbracket_\e, \vv- \chin  \right)_{0,\e}}.
\end{split}
\end{equation}
The first two and the last term on the right-hand side of \eqref{equazione da cui tutto parte} are analogous to the terms appearing in the FEM counterpart of the residual equation, see \cite[Lemma 3.1]{MelenkWohlmuth_hpFEMaposteriori}.
The only difference here, is that we consider the $H^1$ projection of the discrete solution.
Note that in the FEM framework such a projection applied to the solution coincides with the solution itself (since the solution is a piecewise polynomial).

The remaining terms (virtual element consistency terms) on the right-hand side of \eqref{equazione da cui tutto parte} are instead typical of the virtual element setting
and they take into account the fact that the discrete bilinear form is only an approximation to the exact one.

%%%%%%%%%%%%%%%%%%%%%%%%%%%%%%%%%%%%%%%%%%%%%%%%%%%%%%%%%%%%%%%%%%%%%%%%%%%
\subsection{Error estimator and upper bounds} \label{subsection reliability}
In this section, we introduce a computable error estimator and we prove the lower and upper bounds of the energy error in terms of such estimator.

To this purpose, we use the residual equation \eqref{equazione da cui tutto parte} with $\vv=e:= \u-\un$ and $\chin = \errI = (\u - \un)_I$,
where we recall that the local $\h\p$ approximation properties of $\errI$ are described in Proposition \ref{proposition VEM Clement}.
We obtain
\begin{equation} \label{4}
\begin{split}
\vert \err \vert ^2_{1,\Omega} 	& = \sum_{\E \in \taun} \left\{ (\Delta \PinablapuE \un +\fn, \err -\errI)_{0,\E}+ (\f-\fn, \err)_{0,\E} \right .\\
					& \quad \quad \quad \quad \left .  (\anE(\un, \errI) - \aE(\un, \errI)) -\aE( (I-\PinablapuE)\un, \err- \errI) \right\}\\
					& \quad - \sum_{\e\in \EnI} \left( \lllbracket \partial _\n \PinablapuE \un \rrrbracket_\e, \err- \errI   \right)_{0,\e} \\
					& =: \sum_{\E \in \taun} \left\{  I + II + III + IV \right\} + \sum_{\e\in \EnI} V.
\end{split}
\end{equation}
We estimate the five local terms separately. We start with the term $I$. Applying Proposition \ref{proposition VEM Clement}, which is valid since we are assuming $\E$ convex, we get
\begin{equation} \label{bounding term I}
\begin{split}
I:= (\Delta \PinablapuE \un + \fn, \err - \errI)_{0,\E} 	& \le \Vert \Delta \PinablapuE \un + \fn \Vert_{0,\E} \Vert \err - \errI \Vert_{0,\E} \\
									& \lesssim \frac{\hE}{\pE} \Vert \Delta \PinablapuE \un + \fn \Vert_{0,\E}  \vert \err \vert_{1,\omegaE},\\
\end{split}
\end{equation}
where we recall that the hidden constant depends solely on the shape-regularity of $\taun$ and hence of $\tautilden$, see Remark \ref{remark triangular subdecomposition}.
In the following, when no confusion occurs and when unnecessary, we will omit to highlight such dependence.
%Importantly, we observe that if we had optimal approximation estimates in Lemma \ref{lemma Clement Melenk}, then we would gain here a factor $\p^{-1}$.

Secondly, we investigate $II$, the term involving the oscillation of the right-hand side. Owing to $L^2$ orthogonality and $\h\p$ approximation properties of such projector, see e.g. \cite[Lemma 4.2]{hpVEMbasic}, we write
\[
II := (\f-\fn, \err)_{0,\E} = (\f-\fn, \err - \PizpmduE \err)_{0,\E} \lesssim \Vert \f- \fn \Vert_{0,\E} \frac{\hE}{\pE} \vert \err \vert_{1,\E},
\]
where we recall that the $L^2$ projector $\PizpmduE$ is defined in \eqref{L2 projection}.

Next, we deal with $V$, the projected jump edge residual term: for all~$\e\in\EnI$,
\[
\begin{split}
V &:= -\left( \left\llbracket  \partial _\n \PinablapuE \un  \right\rrbracket_\e , \err - \errI  \right)_{0,\e} \le \left \Vert \left\llbracket  \partial _\n \PinablapuE \un  \right\rrbracket_\e   \right \Vert_{0,\e} \Vert \err -\errI\Vert_{0,\e}\\
   & \lesssim \left( \frac{\he}{\pe} \right) ^{\frac{1}{2}} \left \Vert \left\llbracket  \partial _\n \PinablapuE \un  \right\rrbracket_\e \right\Vert _{0,\e} \vert \err \vert_{1,\omegae},
			%\le \left( \frac{\he}{\pe}  \right)^{\frac{1}{2}} \left \Vert \left\llbracket  \partial _\n \PinablapuE \un \right\rrbracket_\e \right\Vert _{0,\e} \vert \err \vert_{1,\omegaE} ,
\end{split}
\]
where in the last but one inequality we employed \eqref{estimates virtual Clement b}.

The first virtual element consistency term $IV$ can be bounded instead using \eqref{estimates virtual Clement a} (in the last but one inequality) and the stability property \eqref{stability} (in the last inequality):
\[
\begin{split}
IV 	&:= -\aE((I-\PinablapuE)\un, \err -\errI) \le \vert (I-\PinablapuE) \un \vert_{1,\E}  \vert \err - \errI \vert _{1,\E} \\
	&\lesssim \vert (I - \PinablapuE) \un \vert_{1,\E} \vert \err \vert_{1,\omegaE} \lesssim \alpha_*^{-\frac{1}{2}}(\pE) \SE((I-\PinablapuE) \un, (I-\PinablapuE) \un)^{\frac{1}{2}} \vert \err \vert_{1,\omegaE}.
\end{split}
\]
We emphasize that we make appear the stabilization term since in the definition of the error residual we want to have computable quantities only.

Finally, we study $III$, the second virtual element consistency term:
\[
\begin{split}
III	& := \anE(\un, \errI) - \aE (\un, \errI) \\
	& = \aE (\PinablapuE \un, \PinablapuE \errI) + \SE((I-\PinablapuE) \un, (I-\PinablapuE) \errI) - \aE(\un, \errI)\\
	& = \aE(\PinablapuE \un - \un, \errI) + \SE ((I-\PinablapuE) \un, (I-\PinablapuE) \errI) \\
	& \le \vert \un - \PinablapuE \un \vert_{1,\E} \vert \errI \vert_{1,\E} + \SE((I-\PinablapuE) \un, (I-\PinablapuE) \errI),\\
\end{split}
\]
whence, recalling Proposition \ref{proposition VEM Clement} and the stability property \eqref{stability},
\[
\begin{split}
III	& \lesssim \alpha_*^{-\frac{1}{2}}(\pE) \SE ((I-\PinablapuE) \un, (I-\PinablapuE) \un)^{\frac{1}{2}} \vert \err \vert_{1,\omegaE} \\
	&\quad+ \SE((I-\PinablapuE) \un, (I-\PinablapuE) \un)^{\frac{1}{2}} \alpha^*(\pE)^ {\frac{1}{2}} \vert (I-\PinablapuE) \errI \vert_{1,\E}\\
	& \lesssim \max(\alpha^{-1}_*(\pE), \alpha^*(\pE))^{\frac{1}{2}}  \SE((I-\PinablapuE) \un, (I-\PinablapuE) \un)^{\frac{1}{2}} \vert \err \vert_{1,\omegaE}.
\end{split}
\]
In order to simplify the notation, we write henceforth
\[
\RE := (\Delta \PinablapuE \un + \fn)_{|_\E} \quad \forall\, \E \in \taun, \quad \quad \quad \Re := \left\llbracket  \partial _\n \PinablapuE \un \right\rrbracket_\e \quad \forall \,\e \in \mathcal E^\E,\, \e\not\subset \partial \Omega.
\]
Collecting the estimates on the five terms on the right-hand side of \eqref{4}, we get, after some trivial algebra,
\[
\begin{split}
\vert \err \vert^2_{1,\Omega} 	& \lesssim \sum_{\E \in \taun} \left\{ \left( \frac{\hE}{\pE} \right)^2 \Vert \RE \Vert ^2_{0,\E} + \frac{\hE^2}{\pE^2} \Vert \f - \fn \Vert_{0,\E}^2 \right.\\
						& \quad\quad\quad\quad \left.+ \max(\alpha^{-1}_*(\pE), \alpha^*(\pE)) \SE((I-\PinablapuE)\un, (I-\PinablapuE)\un)   \right\} \\
						& \quad + \sum_{\e \in \EnI} \frac{\he}{\pe} \Vert \Re \Vert_{0,\e}^2,\\
\end{split}
\]
whence
\begin{equation} \label{error estimator}
\begin{split}
\vert \err \vert^2_{1,\Omega} 	& \lesssim \sum_{\E \in \taun} \left\{ \eta_\E^2 +  \rhoE^2 + \max(\alpha^{-1}_*(\pE), \alpha^*(\pE)) \zetaE^2 \right\} + \sum_{\e \in \EnI} \eta_\e^2,\\
\end{split}
\end{equation}
where we have set the local error estimators as
\begin{equation} \label{local error estimators}
\begin{split}
&\eta_\E = \frac{\hE}{\pE} \Vert \RE \Vert_{0,\E},\quad \eta_\e = \left( \frac{\he}{\pe}  \right)^{\frac{1}{2}} \Vert \Re \Vert_{0,\e},\\
&\zetaE^2= \SE((I-\PinablapuE)\un, (I-\PinablapuE)\un), \quad \rhoE = \frac{\hE}{\pE} \Vert\f-\fn \Vert_{0,\E}.\\
\end{split}
\end{equation}
For ease of notation, we define next $\eta_{\pE}$, which takes into account both the bulk and the edge error estimators $\eta_\E$ and $\eta_\e$, $\e \in \mathcal E^\E$, defined in \eqref{local error estimators}, as
\begin{equation} \label{complete local error estimator}
\eta_{\pE}^2 = \eta_\E^2 + \sum_{\e \in \mathcal E^\E,\, \e \not \subset \partial \Omega} \frac{1}{2} \eta_\e^2.
\end{equation}
%In \eqref{error estimator}, we have presented a possible choice $\eta$ for the global error estimator. In particular, we have bounded the $H^1$ seminorm of the error of the method with such a global estimator.
We highlight two facts. The first one is that, apart from the term involving the stabilization, the upper bound is analogous to the one of $\h\p$ FEM, see \cite[Lemma 3.1]{MelenkWohlmuth_hpFEMaposteriori}.
The second observation is that for nonconvex $\E$, we would have an additional factor $\pE^{2\frac{\pi}{\alpha_\E} - \varepsilon}$ for all $\varepsilon >0$ arbitrarily small in front of $\eta_\E^2$
in \eqref{error estimator}, where $\alpha_\E$ denotes the largest interior angle of $\E$, see Remark \ref{remark non convex}.
%We note once more that the term involving the internal residual is suboptimal in terms of $\p$. In fact, the $\h\p$ FEM counterpart, see \cite[Lemma 3.1]{MelenkWohlmuth_hpFEMaposteriori}, has not the factor $\pE^2$ in front of it.
%The suboptimality in terms of $\p$ is due to the suboptimality of the $L^2$ estimates by functions in the virtual element space, presented in Proposition \ref{proposition VEM Clement}
%and consequently when bounding term I in \eqref{bounding term I}.  \todo{Clearly reshape/remove this part.}

%%%%%%%%%%%%%%%%%%%%%%%%%%%%%%%%%%%%%%%%%%%%%%%%%%%%%%%%%%%%%%%%%%%%%%%%%%%
\subsection{Lower bound} \label{subsection efficiency}
In this section, we bound the local error estimator $\eta_{\pE}$ introduced in \eqref{complete local error estimator}
with the quasi-local $H^1$ seminorm of the error of the method plus a term related to the oscillation of the right-hand side and a term related to the stabilization of the method.
%In particular, we only need to bound the local error estimators in \eqref{local error estimators}.
After recalling a technical tool in Section \ref{subsubsection an inverse inequality}, we prove in Sections \ref{subsubsection bounding the internal residual} and \ref{subsection bounding the edge residual} the bounds on the internal and edge residuals, respectively, in terms of the local energy errors.
%%%%%%%%%%%%%%%%%%%%%%%%
\subsubsection{An inverse inequality} \label{subsubsection an inverse inequality}
Here, we recall an $\h\p$ polynomial inverse inequality, which will be used in Section \ref{subsubsection bounding the internal residual}.
\begin{lem} \label{lemma technical inverse inequality}
Given a polygon~$\E \in \taun$, we set~$\psi_\E$ the piecewise bubble function associated with polygon~$\E \in \taun$ as in~\eqref{polygonal cubic bubble function}.
Then, for all  $\q \in \mathbb P_{\pE}(\E)$ and $1/2 < \alpha \le 2$, the following holds true:
\begin{equation} \label{inverse estimate Melenk}
\vert \psi_\E^\alpha \q \vert_{1,\E} \lesssim \frac{(\pE+1)^{2-\alpha}}{\hE} \Vert \psi_\E^{\frac{\alpha}{2}} \q \Vert_{0,\E},
\end{equation}
where the hidden constant depends on the shape-regularity of $\taun$ and hence of $\tautilden$.
\end{lem}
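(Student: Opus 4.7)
The plan is to reduce the claim on the polygon~$\E$ to a triangle-by-triangle $hp$ inverse estimate on its subtriangulation~$\tautilden(\E)$, exploiting the fact that $\psi_\E$ is defined piecewise as the classical cubic bubble $\psi_\T$ on each triangle $\T \in \tautilden(\E)$.

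First, since $\psi_\E$ vanishes on the skeleton of $\tautilden(\E)$ (each $\psi_\T$ vanishes on $\partial \T$), for any $\alpha > 0$ the product $\psi_\E^\alpha \q$ is a continuous function on $\E$ whose gradient exists piecewise. Thus
\[
\vert \psi_\E^\alpha \q \vert_{1,\E}^2 \;=\; \sum_{\T \in \tautilden(\E)} \vert \psi_\T^\alpha \q \vert_{1,\T}^2,
\qquad
\Vert \psi_\E^{\alpha/2}\q \Vert_{0,\E}^2 \;=\; \sum_{\T \in \tautilden(\E)} \Vert \psi_\T^{\alpha/2}\q \Vert_{0,\T}^2.
\]
So it suffices to prove the triangle analogue: for every $\T \in \tautilden(\E)$ and every $\q \in \mathbb P_{\pE}(\T)$,
\[
\vert \psi_\T^\alpha \q \vert_{1,\T} \;\lesssim\; \frac{(\pE+1)^{2-\alpha}}{\hT}\,\Vert \psi_\T^{\alpha/2} \q \Vert_{0,\T},
\]
since Remark~\ref{remark triangular subdecomposition} gives $\hT \approx \hE$ uniformly, and summing the squared triangle bounds then yields the claim.

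For the triangle estimate, the strategy is a pull-back to the reference triangle $\widehat \T$ (the scaling contributes exactly the factor $\hT^{-1}$) followed by a tensor-product type argument using the one-dimensional weighted inverse inequality stated in Lemma~\ref{lemma 1D hp inverse}. More precisely, one writes
\[
\nabla(\psi_\T^\alpha \q) \;=\; \alpha\,\psi_\T^{\alpha-1}\q\,\nabla\psi_\T \;+\; \psi_\T^{\alpha}\,\nabla\q,
\]
and treats the two contributions separately. For the first term, since $\vert \nabla\psi_\T \vert \lesssim 1$ on $\widehat \T$, one needs to bound $\Vert \psi_\T^{\alpha-1}\q\Vert_{0,\T}$ by $\Vert \psi_\T^{\alpha/2}\q\Vert_{0,\T}$ up to a $\p$-factor: this is a one-dimensional weighted inverse estimate of the type of Lemma~\ref{lemma 1D hp inverse} applied in suitable barycentric coordinates, producing the power $(\pE+1)^{2(\alpha/2 - (\alpha-1))} = (\pE+1)^{2(1-\alpha/2)}$; taking the square root gives $(\pE+1)^{2-\alpha}$. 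For the second term, one first uses a standard (unweighted) polynomial inverse estimate $\Vert \nabla\q \Vert \lesssim (\pE+1)^2 \Vert \q \Vert$ against a cleverly weighted target, then rebalances the bubble weights with Lemma~\ref{lemma 1D hp inverse}; the resulting exponent of $\pE+1$ is again $2-\alpha$ under the condition $1/2 < \alpha \le 2$, which is exactly the range ensuring integrability and the correct balance in the weighted 1D inverse.

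The main obstacle is the triangle-level estimate itself, since one has to combine a standard polynomial inverse bound with the weighted 1D inverse in such a way that the exponents of the bubble weight on the two sides differ in the ``right'' way, and then convert everything to a bound involving only $\Vert \psi_\T^{\alpha/2}\q\Vert_{0,\T}$. The restriction $1/2 < \alpha \le 2$ appears precisely to ensure that each re-weighting step can be applied with nonnegative differences of exponents (so Lemma~\ref{lemma 1D hp inverse} is directly invocable) and that the resulting power of $(\pE+1)$ is exactly $2-\alpha$, matching Lemma~2.6 of \cite{MelenkWohlmuth_hpFEMaposteriori}. Once the triangle estimate is secured, the passage to the polygon is purely the triangle-splitting argument together with $\hT \approx \hE$, as outlined above.
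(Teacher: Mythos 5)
Your plan is essentially the route behind the paper's (cited) proof: the paper simply refers to Lemma B.3.3 of \cite{MascottoPhDthesis}, which is exactly the ``polygonal counterpart'' of \cite[Lemma~2.6/3.4]{MelenkWohlmuth_hpFEMaposteriori}, obtained by splitting $\E$ into the subtriangulation $\tautilden(\E)$, using that $\psi_\E$ vanishes on the skeleton so that $\psi_\E^\alpha\q\in H^1(\E)$ and the (semi)norms decompose trianglewise, applying the triangle-level weighted inverse estimate, and summing with $\hT\approx\hE$. Two small points in your sketch of the triangle estimate deserve care: first, the exponent bookkeeping for the term $\psi_\T^{\alpha-1}\q\,\nabla\psi_\T$ is garbled --- the correct application is $\int\psi^{2(\alpha-1)}\q^2\lesssim(\p+1)^{2(2-\alpha)}\int\psi^{\alpha}\q^2$, whose square root gives $(\p+1)^{2-\alpha}$, whereas ``taking the square root'' of a factor $(\p+1)^{2-\alpha}$ as you wrote would give $(\p+1)^{(2-\alpha)/2}$; second, for $1/2<\alpha<1$ the exponent $2(\alpha-1)$ lies in $(-1,0)$, so Lemma~\ref{lemma 1D hp inverse} as stated (which requires $0\le\alpha_1$) does not apply directly, and you need the variant of the weighted inverse estimate admitting exponents greater than $-1$ (as in Lemma~\ref{thorem inverse estimates on a polygon} or the corresponding result in \cite{MelenkWohlmuth_hpFEMaposteriori}); the restriction $\alpha>1/2$ is also what guarantees $\psi_\T^{\alpha-1}\q\,\nabla\psi_\T\in L^2$ near $\partial\T$, which your $H^1$-splitting step silently uses. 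With these repairs your argument is correct and coincides in substance with the cited proof.
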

\begin{proof}
For a complete proof, see \cite[Lemma B.3.3]{MascottoPhDthesis}, which is the ``polygonal'' counterpart of~\cite[Lemma 3.4]{MelenkWohlmuth_hpFEMaposteriori}.
\end{proof}

%%%%%%%%%%%%%%%%%%%%%%%%
\subsubsection{Bounding the internal residual} \label{subsubsection bounding the internal residual}
In this section, we bound the local internal residual $\RE$ appearing on the right-hand side of \eqref{complete local error estimator}.
To this purpose, we note that plugging $\chin = 0$ in \eqref{equazione da cui tutto parte}, we have
\begin{equation} \label{starting internal residual}
\a(\err, \vv) = \sum_{\E \in \taun} \left\{ (\RE, \vv)_{0,\E} + (\f- \fn, \vv)_{0,\E} - \aE((I-\PinablapuE) \un, \vv)   \right\} - \sum_{\e \in \EnI} (\Re, \vv)_{0,\e}.
\end{equation}
Let us focus our attention on a single element~$\E$. Given~$\bE$ the piecewise bubble function associated with element~$\E$ defined as in~\eqref{polygonal cubic bubble function},
we extend it to $0$ outside $\E$. We then choose $\vv = \bE^{\alpha} \RE$ in \eqref{starting internal residual}, with $1/2 < \alpha \le 2$, obtaining
\begin{equation} \label{6}
\begin{split}
\a(\err, \bE^{\alpha} \RE) 	& = \sum_{\E' \in \taun} \left\{ (\REp, \bE^{\alpha} \RE)_{0,\E'} + (\f-\fn, \bE^{\alpha} \RE)_{0,\E'} - \aEp ((I-\Pi^\nabla_{\E'}) \un, \bE^{\alpha} \RE)   \right\}\\
					& = (\RE, \bE^{\alpha} \RE)_{0,\E} + (\f-\fn, \bE^{\alpha} \RE)_{0,\E} - \aE ((I-\PinablapuE) \un, \bE^{\alpha} \RE).
\end{split}
\end{equation}
%since $\bE$ is null on $\partial \E$ and can be trivially extended to $0$ outside.
From \eqref{6}, we get
\[
\begin{split}
\Vert \bE^{\frac{\alpha}{2}} \RE \Vert^ 2_{0,\E} 	
	& = (\RE, \bE ^{\alpha} \RE)_{0,\E} = \aE(\err, \bE^\alpha \RE) - (\f-\fn, \bE^\alpha \RE)_{0,\E} + \aE((I-\PinablapuE) \un, \bE^\alpha \RE)\\
	& \lesssim \vert \bE ^\alpha \RE\vert_{1,\E} \vert \err \vert_{1,\E} + \Vert (\f - \fn) \bE^{\frac{\alpha}{2}}\Vert_{0,\E} \Vert \bE ^{\frac{\alpha}{2}} \RE \Vert_{0,\E} + \vert (I-\PinablapuE) \un \vert_{1,\E} \vert \bE^\alpha \RE \vert _{1,\E}.\\
\end{split}
\]
As a consequence, applying the stability bounds \eqref{stability}, recalling that $\RE \in \mathbb P_{\pE}(\E)$ and applying the $\h\p$ polynomial inverse estimate on polygons \eqref{inverse estimate Melenk}, one has, for all $1/2 < \alpha \le 2$,
\[
\Vert \bE ^{\frac{\alpha}{2}} \RE \Vert _{0,\E} \lesssim \frac{\pE^{2-\alpha}}{\hE} \left\{ \vert \err \vert_{1,\E} + \alpha_*^{-\frac{1}{2}}(\pE) \SE((I-\PinablapuE)\un, (I-\PinablapuE)\un)^{\frac{1}{2}}  \right\} +\Vert \f-\fn \Vert_{0,\E},
\]
or, equivalently,
\[
\frac{\hE}{\pE} \Vert \bE ^{\frac{\alpha}{2}} \RE \Vert _{0,\E} \lesssim \pE^{1-\alpha} \left\{ \vert \err \vert_{1,\E} + \alpha_*^{-\frac{1}{2}}(\pE) \SE((I-\PinablapuE)\un, (I-\PinablapuE)\un)^{\frac{1}{2}}   \right\} 
+ \frac{\hE}{\pE} \Vert \f - \fn \Vert_{0,\E}.
\]
We are now ready to prove the bound on the internal residual. Recalling that $1/2< \alpha \le 2$, the $\h\p$ polynomial inverse estimate on polygons \eqref{inverse estimates on polygon b} implies
\[
\Vert \RE \Vert_{0,\E} \lesssim \pE^\alpha \Vert \bE^{\frac{\alpha}{2}} \RE \Vert_{0,\E}.
\]
Hence,
\[
\frac{\hE}{\pE} \Vert \RE \Vert_{0,\E} \lesssim \pE
\left\{ \vert \err \vert_{1,\E} + \alpha_*^{-\frac{1}{2}}(\pE) \SE((I-\PinablapuE)\un, (I-\PinablapuE)\un)^{\frac{1}{2}}   \right\} + \hE \pE^{\alpha - 1} \Vert \f - \fn \Vert_{0,\E}.
\]
%Since $\alpha$ is arbitrarily bigger than $\frac{1}{2}$ and appears only in front of the oscillation of the right-hand side, we minimize the loss in terms of $\p$ by requiring $\alpha = \frac{1}{2} + \varepsilon$, with $\varepsilon$ positive and arbitrarily small.
We pick $\alpha = 1/2 + \varepsilon$, with $\varepsilon >0$ arbitrarily small, and get
\begin{equation} \label{final bound on internal residual}
\frac{\hE^2}{\pE^2} \Vert \RE \Vert_{0,\E} ^2 \lesssim \pE^2 \left( \vert \err \vert _{1,\E}^2 + \alpha_*^{-1}(\pE) \SE((I-\PinablapuE) \un, (I-\PinablapuE) \un)   \right) + \frac{\hE^2}{\pE^{1 - 2 \varepsilon}} \Vert \f - \fn \Vert^2_{0,\E}.
\end{equation}

%%%%%%%%%%%%%%%%%%%%%%%%%%%%%%%%%%%%%
\subsubsection{Bounding the edge residual} \label{subsection bounding the edge residual}
Next, we bound the edge residual $\Re$ appearing on the right-hand side of \eqref{complete local error estimator}.
%Without loss of generality, $\e$ is an internal edge; the general case follows analogously.
We henceforth consider a function $\Rebar$ given by $E(\Re)$,
where the lifting operator $E$ is defined in Lemma \ref{lemma lifting operator}.
We recall that the restriction of $\Rebar$ on $\e$ is equal to $\be^\alpha\, \Re$, being $\be$ defined as the quadratic edge bubble function on $\e$ and where $1/2 < \alpha \le 2$.
%therefore it coincides with the restriction of $E(\Re)$ on $\e$.

In the following, we assume that $\Rebar$ can be extended to $0$ outside $\overline \T_1 \cup \overline \T_2$, see \eqref{triangular patch around e}.
Let us denote by $\E_1$ and $\E_2$ the two polygons containing the triangles $\T_1$ and $\T_2$.

We substitute $\vv = \Rebar$ and $\chin=0$ in \eqref{equazione da cui tutto parte}, obtaining for all~$\e \in \EnI$
\begin{equation} \label{8}
\a(\err, \Rebar) = \sum_{i=1}^2 \left\{ (\REi, \Rebar)_{0,\Ei} + (\f- \fn, \Rebar)_{0,\Ei} - \aEi((I-\Pinablai) \un, \Rebar)   \right\} - (\Re, \Rebar)_{0,\e}.
\end{equation}
We observe that the following bound of the third term on the right-hand side of \eqref{8} holds true:
\begin{equation} \label{9}
\begin{split}
\sum_{i=1}^2 \aEi((I- \Pinablai) \un, \Rebar) 	& \le \sum_{i=1}^2 \vert (I - \Pinablai) \un \vert_{1,\Ei} \vert \Rebar \vert_{1,\Ei}\\
							& \lesssim \sum_{i=1}^2 \left\{ \alpha_*^{-\frac{1}{2}}(\p_{\Ei}) \SEi ((I-\Pinablai)\un, (I-\Pinablai)\un)^{\frac{1}{2}} \right\} \vert \Rebar \vert_{1,\Ei}.
\end{split}
\end{equation}
We deduce from \eqref{8} that
\[
\begin{split}
& \Vert \be ^{\frac{\alpha}{2}}\Re \Vert_{0,\e}^2  = (\Re, \Rebar)_{0,\e} \\
& = -\a(\err, \Rebar) + \sum_{i=1}^2 \left\{ (\REi, \Rebar)_{0,\Ei} + (\f- \fn, \Rebar)_{0,\Ei} - \aEi ((I - \Pinablai)\un, \Rebar)   \right\}.\\
\end{split}
\]
and, combining this with \eqref{9}, we arrive at
\[
\begin{split}
\Vert \be ^{\frac{\alpha}{2}}\Re \Vert_{0,\e}^2  & \lesssim \sum_{i=1}^2 \left\{ \left( \vert \err \vert_{1,\Ei} +\alpha_*^{-\frac{1}{2}}(\p_{\Ei})  \SEi((I-\Pinablai)\un, (I-\Pinablai)\un)^{\frac{1}{2}} \right)   \right. \vert \Rebar \vert_{1,\Ei}\\
& \quad \quad \quad \quad+\left. \left(  \Vert \REi \Vert_{0,\Ei} + \Vert \f - \fn \Vert_{0,\Ei} \right) \Vert  \Rebar \Vert_{0,\Ei} \right\}.\\
\end{split}
\]
Recalling that $\pe \approx \pEi$ for all edges $\e\subset \partial \Ei$, $i=1,2$, see~\eqref{assumption local degrees of accuracy}, and
applying Lemma \ref{lemma lifting operator} with $1/2 < \alpha \le 2$ on $\vert \Rebar\vert_{1,\Ei}$ and $\Vert \Rebar\Vert_{0,\Ei}$, we obtain, for every $\varepsilon>0$,
\[
\begin{split}
&\Vert \be^{\frac{\alpha}{2}}\Re \Vert^2_{0,\e}\\
				& \lesssim \sum_{i=1}^2 \left\{ \left( \vert \err \vert_{1,\Ei} +\alpha_*^{-\frac{1}{2}}(\p_{\Ei})  \SEi((I-\Pinablai)\un, (I-\Pinablai)\un)^{\frac{1}{2}} \right)   \right. \he^{-\frac{1}{2}}
										\left( \varepsilon \pe^{2(2-\alpha)} + \varepsilon ^{-1}  \right) ^{\frac{1}{2}} \Vert \be^{\frac{\alpha}{2}} \Re \Vert_{0,\e}\\
				& \quad\quad\quad\quad+\left. \left(  \Vert \REi \Vert_{0,\Ei} + \Vert \f - \fn \Vert_{0,\Ei} \right) \he^{\frac{1}{2}} \varepsilon ^{\frac{1}{2}} \Vert \be^{\frac{\alpha}{2}} \Re \Vert_{0,\e} \right\}.
\end{split}
\]
Therefore, we can write
\[
\begin{split}
& \left( \frac{\he}{\pe} \right)^{\frac{1}{2}}\Vert \be ^{\frac{\alpha}{2} } \Re \Vert_{0,\e}	\\
& \lesssim \sum_{i=1}^2 \left\{ \left( \vert \err \vert_{1,\Ei} +\alpha_*^{-\frac{1}{2}}(\pE)  \SEi((I-\Pinablai)\un, (I-\Pinablai)\un)^{\frac{1}{2}} \right)   \right. \left( \varepsilon \pe^{2(2-\alpha)} + \varepsilon ^{-1}  \right) ^{\frac{1}{2}} \pe^{-\frac{1}{2}}\\
& \quad\quad\quad\quad+ \left. \varepsilon^{\frac{1}{2}} \frac{\he}{\pe^{\frac{1}{2}}}\left(  \Vert \REi \Vert_{0,\Ei} + \Vert \f - \fn \Vert_{0,\Ei} \right) \right\}.
\end{split}
\]
Hence, squaring both sides, one deduces
\[
\begin{split}
\frac{\he}{\pe}  \Vert  \be^{\frac{\alpha}{2}} \Re \Vert_{0,\e}^2
& \lesssim \sum_{i=1}^2 \left\{ \left( \vert \err \vert_{1,\Ei}^2 +\alpha_*^{-1}(\p_{\Ei})  \SEi((I-\Pinablai)\un, (I-\Pinablai)\un)\right)   \right. \left( \varepsilon \pe^{2(2-\alpha)} + \varepsilon ^{-1}  \right)  \pe^{-1}\\
& \quad\quad\quad\quad+ \left. \varepsilon\frac{\he^2}{\pe}\left(  \Vert \REi \Vert_{0,\Ei}^2 + \Vert \f - \fn \Vert_{0,\Ei} ^2 \right) \right\}.
\end{split}
\]
Using that $\he \le \hEi$ for $i=1,2$, also recalling \eqref{assumption local degrees of accuracy} and \eqref{final bound on internal residual}, we get
\[
\begin{split}
&\frac{\he}{\pe}  \Vert  \be^{\frac{\alpha}{2}} \Re \Vert_{0,\e}^2\\
& \lesssim \sum_{i=1}^2 \left\{ \left( \vert \err \vert_{1,\Ei}^2 +\alpha_*^{-1}(\p_{\Ei})  \SEi((I-\Pinablai)\un, (I-\Pinablai)\un)\right)   \right. \left( \varepsilon \pe^{2(2-\alpha)} + \varepsilon ^{-1}  \right)  \pe^{-1}\\
& \quad \quad \quad \quad \left. +\varepsilon \pe \left(  \pEi^2 \left( \vert \err \vert _{1,\Ei}^2 + \alpha_*^{-1}(\pEi) S^{\Ei}((I-\PinablapuEi) \un, (I-\PinablapuEi) \un)   \right) + \frac{\hEi^2}{\pEi^{1 - 2 \varepsilon}} \Vert \f - \fn \Vert^2_{0,\Ei}  \right) \right\},
\end{split}
\]
whence, using again \eqref{assumption local degrees of accuracy},
\[
\begin{split}
&\frac{\he}{\pe}  \Vert  \be^{\frac{\alpha}{2}} \Re \Vert_{0,\e}^2\\
& \lesssim \sum_{i=1}^2 \left\{ \left( \vert \err \vert_{1,\Ei}^2 +\alpha_*^{-1}(\p_{\Ei})  \SEi((I-\Pinablai)\un, (I-\Pinablai)\un)\right)   \right. \left[  \pe^{-1} \left( \varepsilon \pEi^{2(2-\alpha)} + \varepsilon ^{-1}  \right)  + \pEi ^3 \varepsilon \right]\\
& \quad \quad \quad \quad  \left. +\varepsilon \pEi ^{2(\varepsilon +1)} \left( \frac{\h_{\E_i}}{\pEi} \right)^2 \Vert \f - \fn \Vert^2_{0,\E_i}  \right\}, \quad \frac{1}{2} < \alpha \le 2.\\
\end{split}
\]
Selecting $\varepsilon = \pe^{-2}$ and using once more \eqref{assumption local degrees of accuracy}, one obtains
\[
\begin{split}
&\frac{\he}{\pe}  \Vert  \be^{\frac{\alpha}{2}} \Re \Vert_{0,\e}^2\\
& \lesssim \sum_{i=1}^2 \left\{ \pEi \left( \vert \err \vert_{1,\Ei}^2 +\alpha_*^{-1}(\p_{\Ei})  \SEi((I-\Pinablai)\un, (I-\Pinablai)\un)\right)  + \pEi ^{\frac{2}{\pE^2}} \left(\frac{\h_{\E_i}}{\pEi}\right )^{2} \Vert \f - \fn \Vert^2_{0,\E_i}    \right\}.\\
\end{split}
\]
So far, we have assumed that $1/2 < \alpha \le 2$. In order to get the desired bound on the edge residual, i.e. the one with $\alpha=0$,
we apply Lemma \ref{lemma 1D hp inverse} with $\alpha_1=0$ and $\alpha_2 = 1/2+\varepsilon$, getting
\[
\begin{split}
\frac{\he}{\pe} \Vert \Re \Vert^2_{0,\e} 	& \lesssim \pe^{1+2\varepsilon} \frac{\he}{\pe} \Vert \be^{\frac{1}{2}}\Re \Vert^2_{0,\e}\\
							& \lesssim \sum_{i=1}^2 \pEi^{1+2\, \varepsilon} \left\{  \pEi \left( \vert \err \vert_{1,\Ei}^2 +\alpha_*^{-1}(\p_{\Ei})  \SEi((I-\Pinablai)\un, (I-\Pinablai)\un)  \right) \right.\\
							& \quad \quad \quad \quad \quad \quad \left.+ \pEi ^{\frac{2}{\pE^2}} \left(\frac{\h_{\E_i}}{\pEi}\right )^{2} \Vert \f - \fn \Vert^2_{0,\E_i} \right\}.
\end{split}
\]
Finally, note that for any value of $\pEi \in \mathbb N$ it holds that $\pEi^{\frac{2}{\pEi^2}} \le 3$ and thus such term can be discarded.

%%%%%%%%%%%%%%%%%%%%%%%%%%%%%%%%%%%%%%%%%%%%%%%%%%%%%%%%%%%%%%%%%%%%%%%%%%%
\subsection{Conclusions} \label{subsection conclusions}
We collect here the lower and upper bounds discussed in the foregoing Sections \ref{subsection reliability} and \ref{subsection efficiency}.
Note that such bounds are explicit in $\h$ and $\p$.

\begin{thm} \label{theorem hp VEM a posteriori error estimates}
Assume that the assumptions (\textbf{D1})-(\textbf{D2})-(\textbf{P1}) hold true.
Let $\u$ and $\un$ be the solutions to \eqref{Poisson problem homogeneous Dirichlet weak formulation} and \eqref{hp VEM}, respectively, and let $\err = \u - \un$.
For all $\E \in \taun$, let $\eta_{\pE}$ be the error residual defined in \eqref{complete local error estimator} and let $\rhoE$ and $\zetaE$ be defined in \eqref{local error estimators}.
Then, assuming that all the polygons $\E \in \taun$ are convex, the following global upper bound holds true:
\begin{equation} \label{hp VEM upper bound}
\begin{split}
\vert \err \vert^2_{1,\Omega} 	& \lesssim \sum_{\E \in \taun} \left\{ \eta_{\pE}^2 + \rhoE^2 + \max(\alpha_*^{-1}(\pE), \alpha^*(\pE)) \zetaE^2  \right\}.\\
\end{split}
\end{equation}
Further, for every $\E \in \taun$ and for all $\varepsilon > 0$, the following local lower bounds hold true:
\begin{equation} \label{hp VEM lower bound}
\eta_{\pE}^2 +\zetaE^2 + \rhoE^2 \lesssim  \sum_{\E' \in \widetilde \omega _\E} \p_{\E'}^{1+2\, \varepsilon} \left\{  \p_{\E'} \left( \vert \err \vert_{1,\E'}^2 +\alpha_*^{-1}(\p_{\E'})  \zeta_{\p_{\E'}^2)}\right) 
			+ \rho_{\p_{\E'}}^2 \right\},
\end{equation}
where $\widetilde \omega_\E = \cup\{\E' \in \taun \mid \partial \E' \cap \partial \E \ne \emptyset\}$.
The hidden constants in \eqref{hp VEM upper bound} and \eqref{hp VEM lower bound} depend solely on $\varepsilon$ and on the shape-regularity of $\taun$ and hence of $\tautilden$, see Remark \ref{remark triangular subdecomposition}.
\end{thm}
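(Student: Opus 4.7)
The plan is to treat the two halves of the theorem as the summaries of the analyses developed in Sections~\ref{subsection reliability} and~\ref{subsection efficiency}, and to organize the proof as the assembly of those ingredients, together with the final step of summing local contributions and invoking the bounded overlap of triangular patches.

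For the upper bound~\eqref{hp VEM upper bound}, my starting point is the residual identity~\eqref{equazione da cui tutto parte}, which I would test with $\vv = \err$ and $\chin = \errI$, the \Clement-type VEM quasi-interpolant of Proposition~\ref{proposition VEM Clement}. This produces a decomposition into five local contributions on each $\E$: an internal residual term, an oscillation term, two ``virtual consistency'' terms (the one coming from $\aE((I-\Pinabla)\un,\err-\errI)$ and the one arising from $\anE(\un,\errI)-\aE(\un,\errI)$), and an edge jump term. Each of these is handled by a Cauchy--Schwarz inequality paired with the appropriate interpolation estimate from Proposition~\ref{proposition VEM Clement}: \eqref{estimates virtual Clement a} for the bulk terms and \eqref{estimates virtual Clement b} for the edge term; the stabilization contributions are rewritten as $\SE$-norms through the local stability \eqref{stability}, whence the factor $\max(\alpha_*^{-1}(\pE),\alpha^*(\pE))$ emerges. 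Convexity of $\E$ is used exclusively through the $\frac{\hE}{\pE}$-scaling in~\eqref{estimates virtual Clement a}; the nonconvex case would follow verbatim with the modified exponent described in Remark~\ref{remark non convex}. Summing over $\E$ and over edges, and bounding $\sum_\E |\err|^2_{1,\omega_\E}\lesssim |\err|^2_{1,\Omega}$ by finite overlap of the patches, I absorb the resulting factor $|\err|_{1,\Omega}$ into the left-hand side.

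For the local lower bound~\eqref{hp VEM lower bound}, the strategy is to bound $\RE$ and $\Re$ separately. To bound $\RE$, I would test the residual equation (setting $\chin=0$ in~\eqref{equazione da cui tutto parte}) with $\vv=\bE^{\alpha}\RE$, where $\bE$ is the piecewise cubic bubble on $\tautilden(\E)$; this isolates a polygonal-local identity because $\bE$ vanishes on $\partial\E$. The bubble-weighted $L^2$ norm of $\RE$ is then controlled by $|\err|_{1,\E}$, the oscillation $\|\f-\fn\|_{0,\E}$, and the VEM consistency term, with the inverse estimate~\eqref{inverse estimate Melenk} of Lemma~\ref{lemma technical inverse inequality} used to convert $|\bE^{\alpha}\RE|_{1,\E}$ into a weighted $L^2$ norm. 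Lemma~\ref{thorem inverse estimates on a polygon} with $\alpha_1=0$ and $\alpha_2=\alpha$ promotes the bubble-weighted norm back to the full $L^2$ norm at the price of the factor $\pE^{\alpha}$, and the optimization $\alpha=\tfrac{1}{2}+\varepsilon$ yields~\eqref{final bound on internal residual}. For $\Re$, I would use the lifting operator $\Rebar=E(\Re)$ from Lemma~\ref{lemma lifting operator} (extended by zero outside the two polygons sharing $\e$), test the residual equation with it, and use \eqref{estimates on extension operator b}--\eqref{estimates on extension operator c} together with the already-established bound on $\RE$; optimization in the free parameter by choosing $\varepsilon=\pe^{-2}$ gives the bubble-weighted edge bound. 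Finally, the one-dimensional inverse inequality of Lemma~\ref{lemma 1D hp inverse} with $\alpha_1=0$, $\alpha_2=\tfrac{1}{2}+\varepsilon$ removes the edge bubble weight, producing the stated estimate after observing that $\pEi^{2/\pEi^2}$ is uniformly bounded.

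I expect the main obstacle to lie in the efficiency part, specifically in propagating the $\p$-explicit constants cleanly through the combination of the lifting of Lemma~\ref{lemma lifting operator}, the polygonal inverse estimate of Lemma~\ref{lemma technical inverse inequality}, and the one-dimensional inverse inequality of Lemma~\ref{lemma 1D hp inverse}. The delicate point is the precise choice of the exponent $\alpha\in(1/2,2]$ and the Young-type parameter $\varepsilon$ in~\eqref{estimates on extension operator b}--\eqref{estimates on extension operator c}, which must be coordinated so as to yield the final factor $\pEi^{1+2\varepsilon}\cdot\pEi$ without losing additional powers of $\p$ through the VEM consistency terms. Once the local edge bound is obtained, the statement~\eqref{hp VEM lower bound} follows by combining with~\eqref{final bound on internal residual} and summing the contributions of the (finitely many) polygons whose boundaries touch~$\E$, which constitutes $\widetilde \omega_\E$; assumption~(\textbf{P1}) ensures that $\pEi\approx \pE$ on this patch and that the power of $\pe$ and $\pEi$ are interchangeable throughout.
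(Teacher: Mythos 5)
Your proposal is correct and follows essentially the same route as the paper: the upper bound is obtained exactly as in Section~\ref{subsection reliability} by testing \eqref{equazione da cui tutto parte} with $\vv=\err$, $\chin=\errI$ and invoking Proposition~\ref{proposition VEM Clement} together with \eqref{stability} and finite patch overlap, while the lower bound reproduces the bubble/lifting arguments of Sections~\ref{subsubsection bounding the internal residual}--\ref{subsection bounding the edge residual}, including the choices $\alpha=\tfrac12+\varepsilon$, $\varepsilon=\pe^{-2}$, and the final application of Lemma~\ref{lemma 1D hp inverse}. No gaps beyond the paper's own level of detail.
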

%It is important to stress that, apart from the term involving the stabilization, the bounds of both the internal and edge residuals mimic their FEM counterparts, see \cite[Lemmata 3.4, 3.5]{MelenkWohlmuth_hpFEMaposteriori}.
%
%We also highlight that in the upper and lower bounds one has to take into account
%the effect of the stabilization, which plays a role through $\max(\alpha_*^{-1}(\pE),\alpha^*(\pE))$ in \eqref{hp VEM upper bound}
%and through $\alpha_*^{-1}(\pE)$ in \eqref{hp VEM lower bound}.
%We will discuss more in details about this issue in Section \ref{section numerical results} \todo{put at the end a more precise reference.}

On the light of Theorem \ref{theorem hp VEM a posteriori error estimates}, the global (computable) error estimator that we propose is
\begin{equation} \label{actual error estimator}
\eta_{comp}^2 = \sum_{\E\in \taun} \eta^2_{comp,\E}=\sum_{\E\in \taun} ( \eta_{\pE}^2 +\zetaE^2+\rhoE^2).
\end{equation}
Note that in the bounds \eqref{hp VEM upper bound} and \eqref{hp VEM lower bound} there appear additional multiplicative terms depending on $\max (\alpha_*^{-1}(\pE), \alpha^*(\pE))$.
On the other hand, such terms were numerically shown in \cite{hpVEMcorner} to have a very mild dependence in terms of $\pE$.
%Differently from the previous definition \eqref{error estimator}, here the summand $\zetaE^2$ is not multiplied by the factor $\max (\alpha_*^{-1}(\pE), \alpha^*(\pE))$ since we do not have an explicit estimate on the two stability terms.
We refer to Remark \ref{remark on pollution factor} for additional comments regarding the effects of the ``stability'' terms $\alpha_*^{-1}(\pE)$ and $\alpha^*(\pE)$.

%Note that, we neglect the effect of the multiplicative terms $\alpha_*^{-1}(\pE)$ and $\alpha^*(\pE)$ appearing in the above bounds. Indeed, although for some classical choices of $\SE(\cdot, \cdot)$
%such terms have been bounded theoretically with a suitable power of $\pE$,
%a set of numerical tests in \cite[Section 4.1]{hpVEMcorner} showed that such terms have a very mild dependence on $\pE$ in practice.

{We also underline that on the right-hand side of~\eqref{hp VEM lower bound}, in addition to the energy error, other two terms appear.
The term $\rho_{\p_{\E}}$ is related to the oscillation of $\f$, the datum of the problem~\eqref{strong Poisson problem}, and is typical also in the finite element framework.
On the other hand, the term $\zeta_{\p_{\E}}$ deals with the nonexactness of the discrete bilinear form and is standard in a posteriori error analysis of VEM, see~\cite{ManziniBeirao_VEMresidualaposteriori, cangianigeorgulispryersutton_VEMaposteriori}.}

\begin{remark} \label{remark theorem non convex}
In presence of nonconvex polygons, the bound \eqref{hp VEM upper bound} needs to be modified; in particular, it appears in front of $\eta_{\pE}^2$ a suboptimal factor $\p^{2\frac{\pi}{\alpha_\E} - \varepsilon}$
for all $\varepsilon >0$ arbitrarily small, $\alpha_\E$ being the largest interior angle of $\E$.
Moreover, assuming that the assumption (\textbf{D2}) does not hold true, the estimates would get more involved, as one should take care of different scaling in terms of the size of elements and edges
and of the effects of small edges on the stabilization.
\end{remark}

%%%%%%%%%%%%%%%%%%%%%%%%%%%%%%%%%%%%%%%%%%%%%%%%%%%%%%%%%%%%%%
\section{Numerical results} 
\label{section numerical results}
%%%%%%%%%%%%%%%%%%%%%%%%%%%%%%%%%%%%%%%%%%%%%%%%%%%%%%%%%%%%%%
In this section, we present a set of numerical experiments investigating on the performances of the error estimator introduced in~\eqref{actual error estimator}.
More precisely, we validate in Section~\ref{subsection performances of the error estimators in terms of p} the estimates presented in Theorem~\ref{theorem hp VEM a posteriori error estimates},
whereas, in Section \ref{subsection refinement strategy}, we recall from \cite{MelenkWohlmuth_hpFEMaposteriori}
an $\h\p$ refinement algorithm which permits us to apply the adaptive $\h\p$ virtual element method on a number of test cases.

Before presenting the results, we have to settle some features of the method, which so far were kept at a very general level. First of all, we underline that in the virtual element setting, it is not possible to compute explicitly the exact $H^1$ error (since functions in virtual element spaces are not known in closed-form) and therefore we compute instead the following quantity, that is a classical choice in the VEM literature:
\begin{equation} \label{computable error}
\vert u - \Pinablaa \un \vert_{1,\taun},
\end{equation}
where $(\Pinablaa \un)_{|{\E}} = \Pinabla(\un)_{|_{\E}}$ for all $\E \in \taun$, see~\eqref{H1 projector}.
By a triangle inequality and continuity of the $\Pi^\nabla$ operator, it can be easily checked that the quantity above differs with $\vert u -  \un \vert_{1,\Omega}$ by a piecewise polynomial approximation term (and thus holds the same behaviour essentially in all cases of interest).

%\red{It is possible to show that the two quantities converge to zero with the same $\h$ rate; moreover, they also converge with the same $\p$ rate whenever approximating analytic solutions
%and the same rate in terms of the cubic root of the number of degrees of freedom whenever approximating solutions with corner singularities.
%To see this, we first observe that
%\[
%\vert  u - \Pinablaa \un \vert_{1,\taun} \le \vert u - \Pinablaa u \vert_{1,\taun} +  \vert \Pinablaa(u - \un) \vert_{1,\taun} \le \vert u - \Pinablaa u \vert_{1,\taun} + \sqrt{2} \vert u - \un \vert_{1,\Omega}.
%\]
%On the other hand, one has from~\cite[Lemma 1]{hpVEMcorner} that
%\[
%\vert u - \un \vert _{1,\Omega} \le \sum_{\E \in \taun} \frac{1+\alpha^*(\pE)}{\min_{\E'\in \taun} \alpha_*(\p_{\E'})} \left\{   \vert u - \Pinabla u \vert_{1,\E} + \vert u - \uI \vert_{1,\E}  + \mathcal F^\E \right\} \quad \forall \uI \in \Vn,
%\]
%where~$\mathcal F^\E$ is the smallest positive constant such that
%\[
%\vert \langle \fn,\vn\rangle_{n,\E} - (\f,\vn)_{0,\E} \vert \le \mathcal F^\E \vert \vn \vert_{1,\E} \quad \forall \vn \in \VnE,\, \forall \E \in \taun.
%\]
%This, together with best polynomial approximation results and Proposition~\ref{proposition VEM Clement}, shows the equivalent convergence rate.
%}

Another important aspect of the method is the choice of the stabilization in \eqref{stabilizing bilinear form}. We adopt here the so called ``D-recipe'', which was firstly introduced in \cite{VEM3Dbasic} and whose performances were investigated in deep in \cite{fetishVEM,fetishVEM3D}.
If we denote by $\mathbf{S^\E}$ the matrix representing the stabilization $\SE$ on element $\E$ with respect to the canonical basis \eqref{canonical basis}, then we set
\begin{equation} \label{D-recipe}
\mathbf\S^{\mathbf\E}_{i,j} =\max\{ 1,\aE(\Pinabla\varphi_j, \Pinabla \varphi_i)   \} \delta_{i,j},
\end{equation}
where $\delta_{i,j}$ denotes the Kronecker delta.
Among the possible stabilizations available in the virtual element literature, the one defined in \eqref{D-recipe} is one of the most appealing in terms of robustness of the method,
both when considering high degrees of accuracy and in presence of badly-shaped elements.

Finally, we address the issue of picking a ``clever'' polynomial basis dual to the internal moments \eqref{internal moments}.
In particular, following again \cite{fetishVEM, fetishVEM3D}, we employ a basis $\{ \m_{\boldalpha} \}_{\vert\boldalpha\vert=0}^{\p-2}$ which is $L^2(\E)$ orthonormal for all $\E \in \taun$.
Such a basis can be built, for instance, by orthonormalizing a basis of scaled and shifted monomials.
Picking an orthonormal basis dual to internal moments is a choice particularly suited for the $\p$ version of the method since it allows to effectively damp the condition number of the stiffness matrix for high values of the polynomial degree.

In the numerical experiments, both the error estimator and the computable error \eqref{computable error} are normalized by $\vert u \vert_{1,\Omega}$.

%%%%%%%%%%%%%%%%%%%%%%%%%%%%%%%%%%%%%%%%%%%%%%%%%%%%%%%%%%%%%%%%%%%%%%%%%%%
\subsection{Performances in terms of $\p$ of the error estimator} \label{subsection performances of the error estimators in terms of p}
In this section, we investigate the performances of the error estimator for uniform $\p$ refinements; that is, we fix a coarse mesh and we achieve convergence by raising the polynomial degree in all mesh elements
(note that the performances of $\h$ refinements where the topic of \cite[Section 6.1]{cangianigeorgulispryersutton_VEMaposteriori} and therefore are not explored here).
To this end, we consider three test cases with known exact solution
\begin{equation} \label{3 solutions bonta}
\begin{split}
& u_1(x,y) = \sin(\pi\,x) \, \sin (\pi\, y) \quad \text{in } \Omega_1 = [0,1]^2,\\
& u_2(r,\theta) = r^2(\log(r) \sin (2\theta) + \theta \cos(2\theta)) \quad \text{in } \Omega_1,\\
& u_3(r,\theta) = r^{\frac{2}{3}} \sin\left(\frac{2}{3}\left(\theta+\frac{\pi}{2} \right) \right) \quad \text{in } \Omega_2 = [-1,1]^2\setminus[-1,0]^2.\\
\end{split}
\end{equation}
As usual, the loading term and the (Dirichlet) boundary conditions are set in accordance with the exact solution.
Function $u_1$ is analytic, function $u_2$ belongs to $H^{3-\varepsilon}(\Omega_1)$ for all $\varepsilon >0$ and is the ``natural'' singular solution to an elliptic problem over square $\Omega_1$,
function $u_3$ belongs to $H^{\frac{5}{3}-\varepsilon}(\Omega_2)$ for all $\varepsilon >0$ and is the ``natural'' singular solution to an elliptic problem over the L-shaped domain $\Omega_2$.

We test the performances of the error indicator employing two types of mesh, namely a Cartesian and a Voronoi mesh. In Figure \ref{figure meshes}, we depict the two meshes (on domain $\Omega_1$), 
their counterparts in the L-shaped domain $\Omega_2$ being analogous.
\begin{figure}  [h]
\centering
\subfigure {\includegraphics [draft=false, angle=0, width=0.49\textwidth]{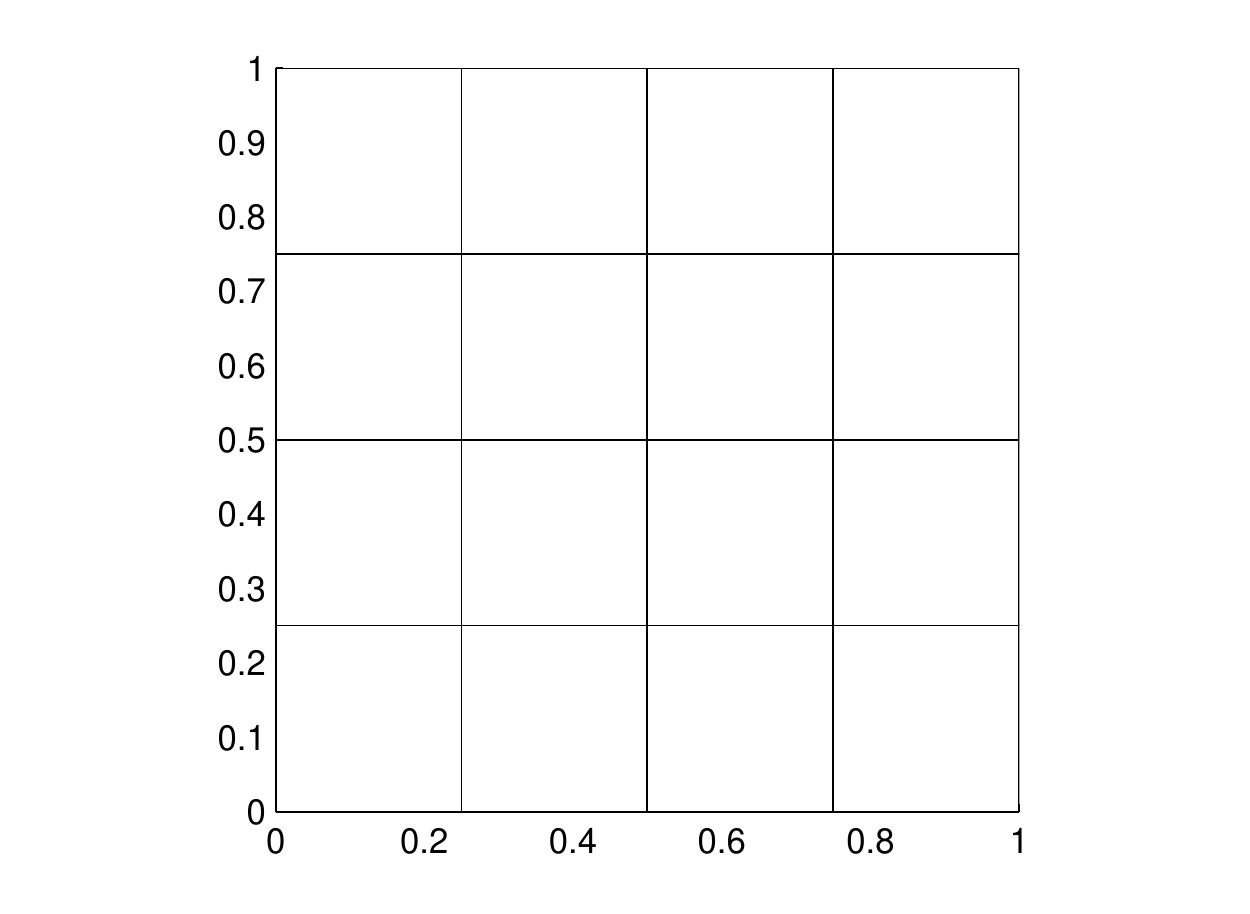}}
\subfigure {\includegraphics [draft=false, angle=0, width=0.49\textwidth]{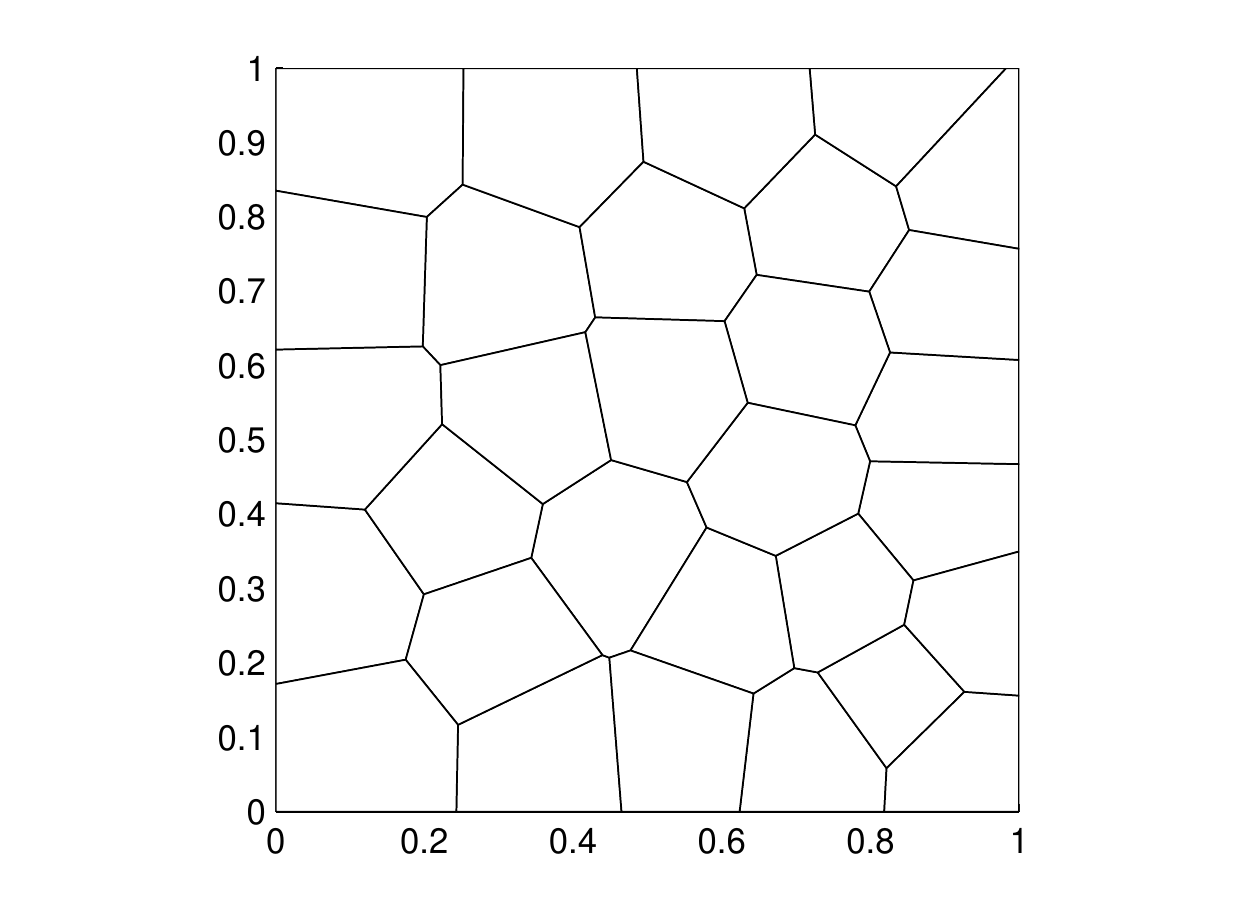}}
\caption{Left: Cartesian mesh. Right: Voronoi mesh.} \label{figure meshes}
\end{figure}

In Figures \ref{figure bonta square} and \ref{figure bonta Voronoi}, we plot (for different values of the degree $p$) the computable error \eqref{computable error} versus the computed error estimator \eqref{actual error estimator} for the three exact solutions in \eqref{3 solutions bonta} on the two meshes of Figure \ref{figure meshes}.
\begin{figure}  [h]
\centering
\subfigure {\includegraphics [draft=false, angle=0, width=0.48\textwidth]{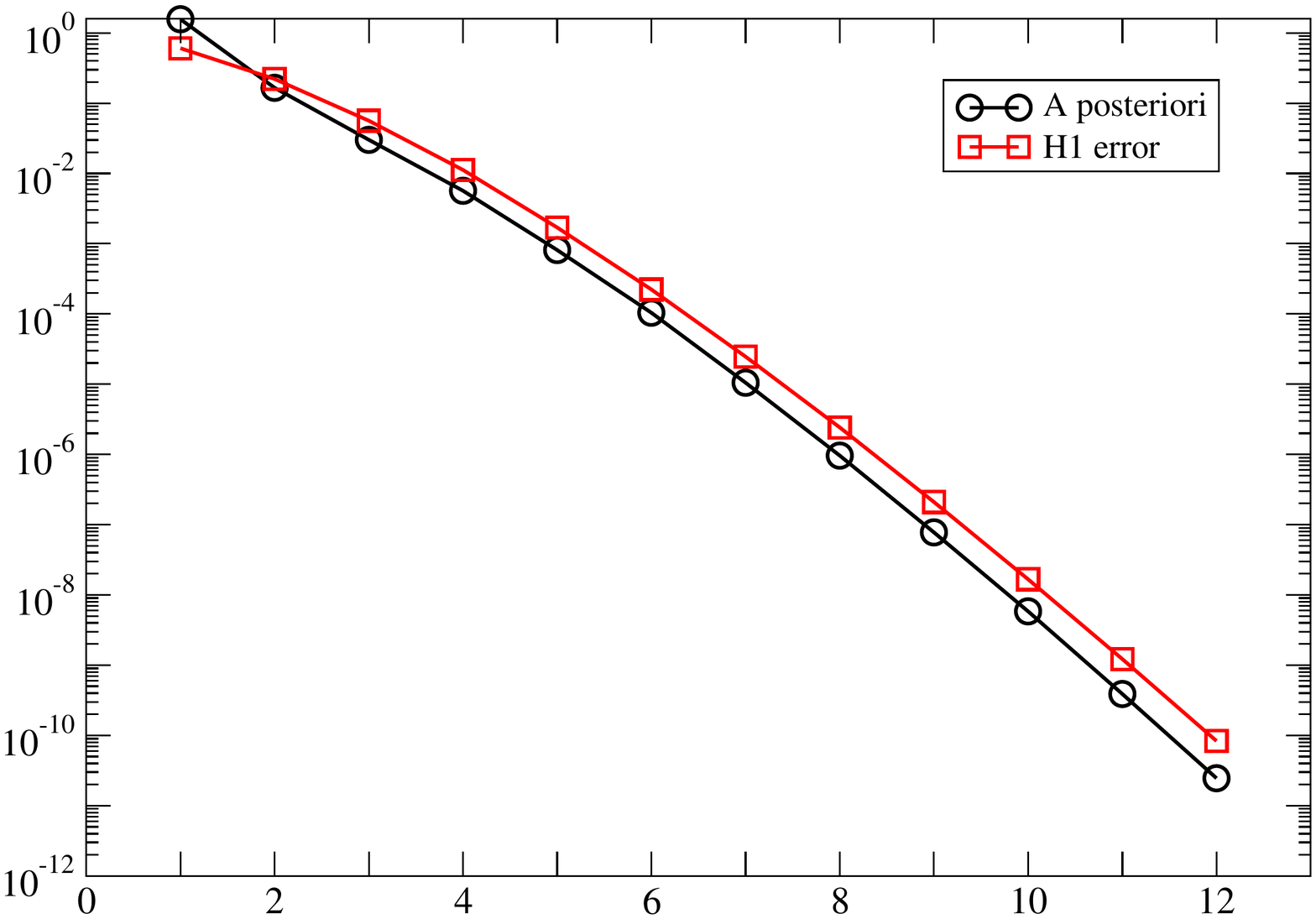} \put(-103,10){\p}}
\subfigure {\includegraphics [draft=false, angle=0, width=0.48\textwidth]{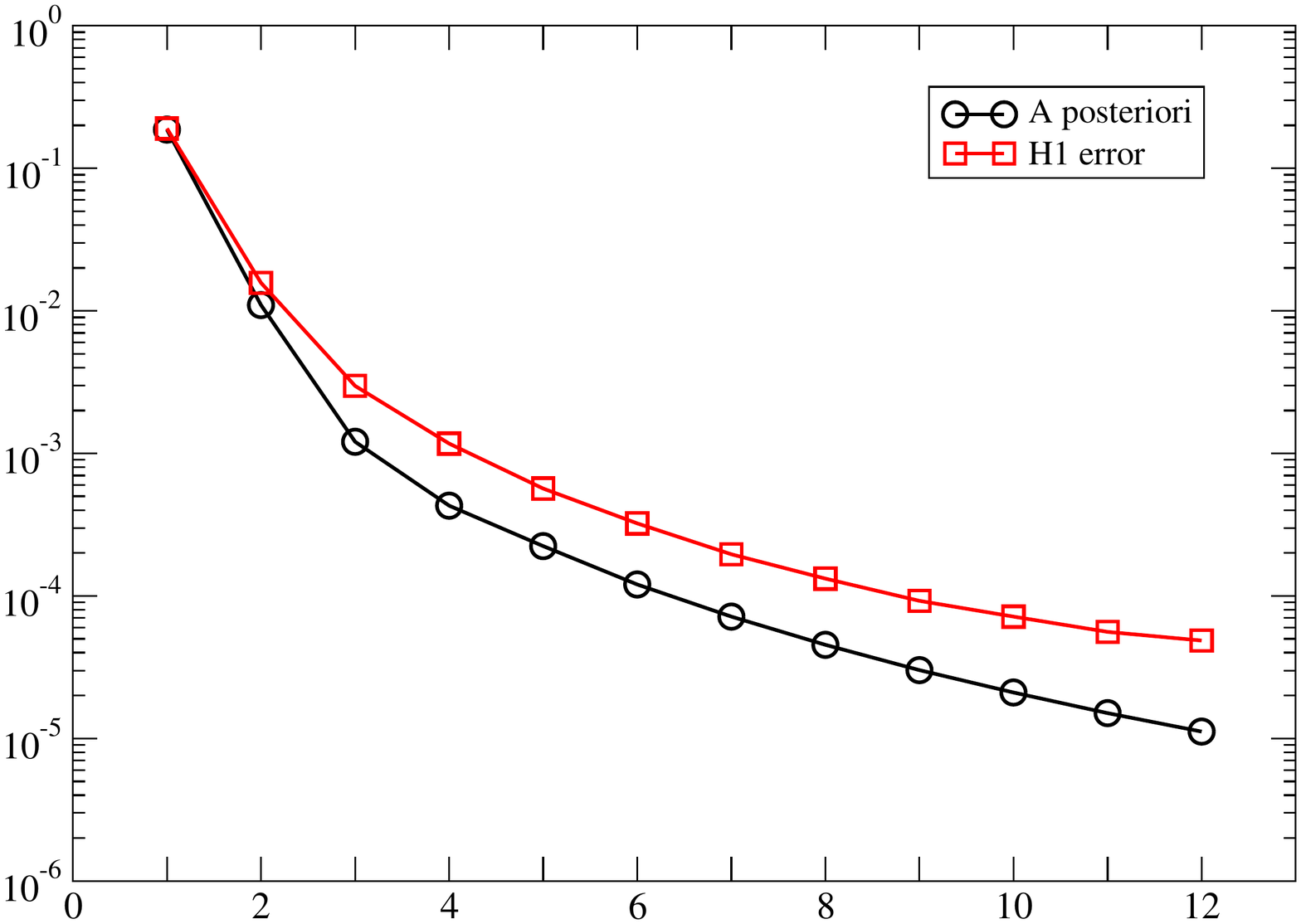} \put(-103,10){\p}}
\subfigure {\includegraphics [draft=false, angle=0, width=0.48\textwidth]{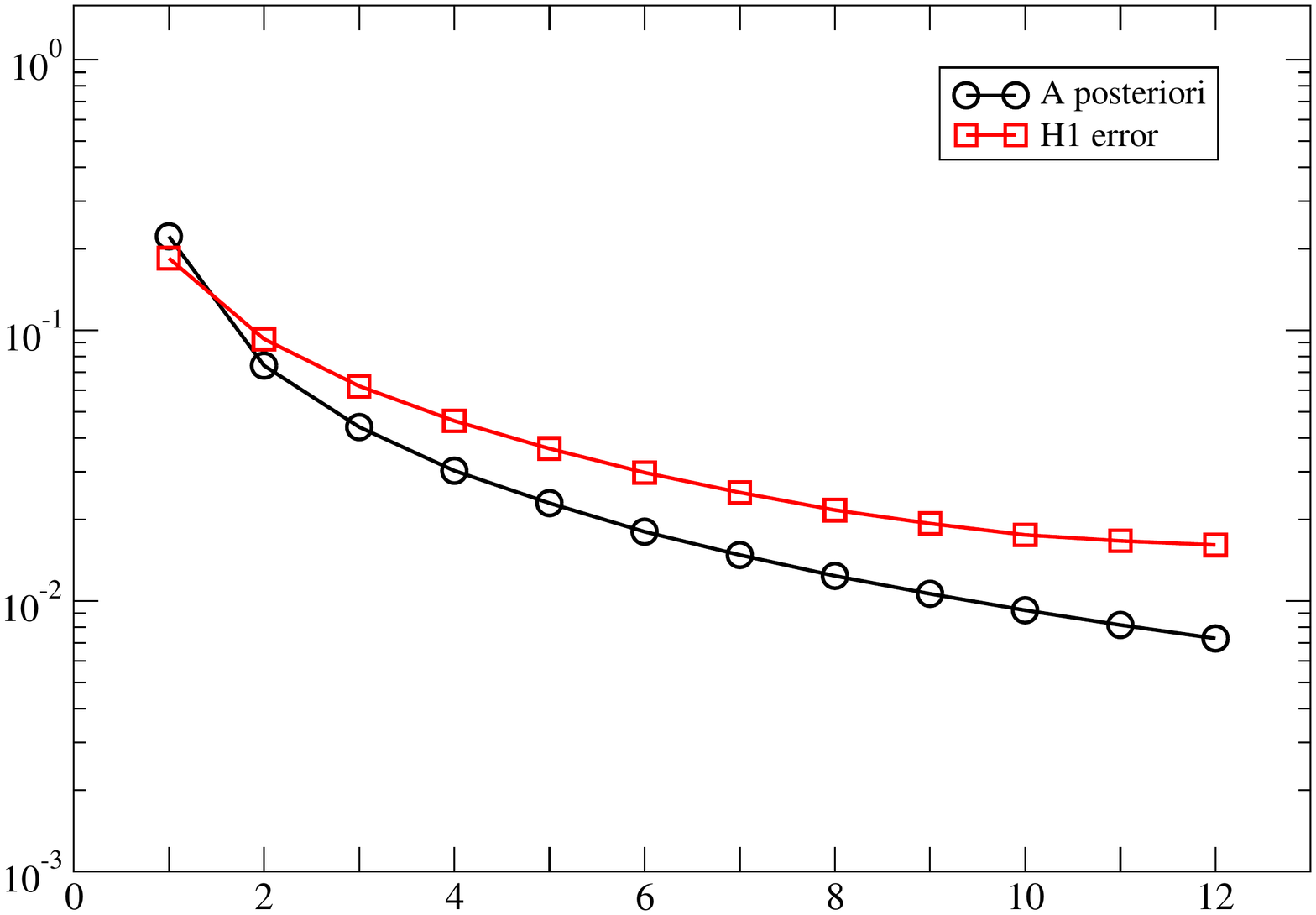} \put(-103,10){\p}}
\caption{Cartesian mesh. Computable error \eqref{computable error} versus error estimator \eqref{actual error estimator} in terms of $p$. Left: solution $u_1$. Right: solution $u_2$. Below: solution $u_3$.} \label{figure bonta square}
\end{figure}
\begin{figure}  [h]
\centering
\subfigure {\includegraphics [draft=false, angle=0, width=0.48\textwidth]{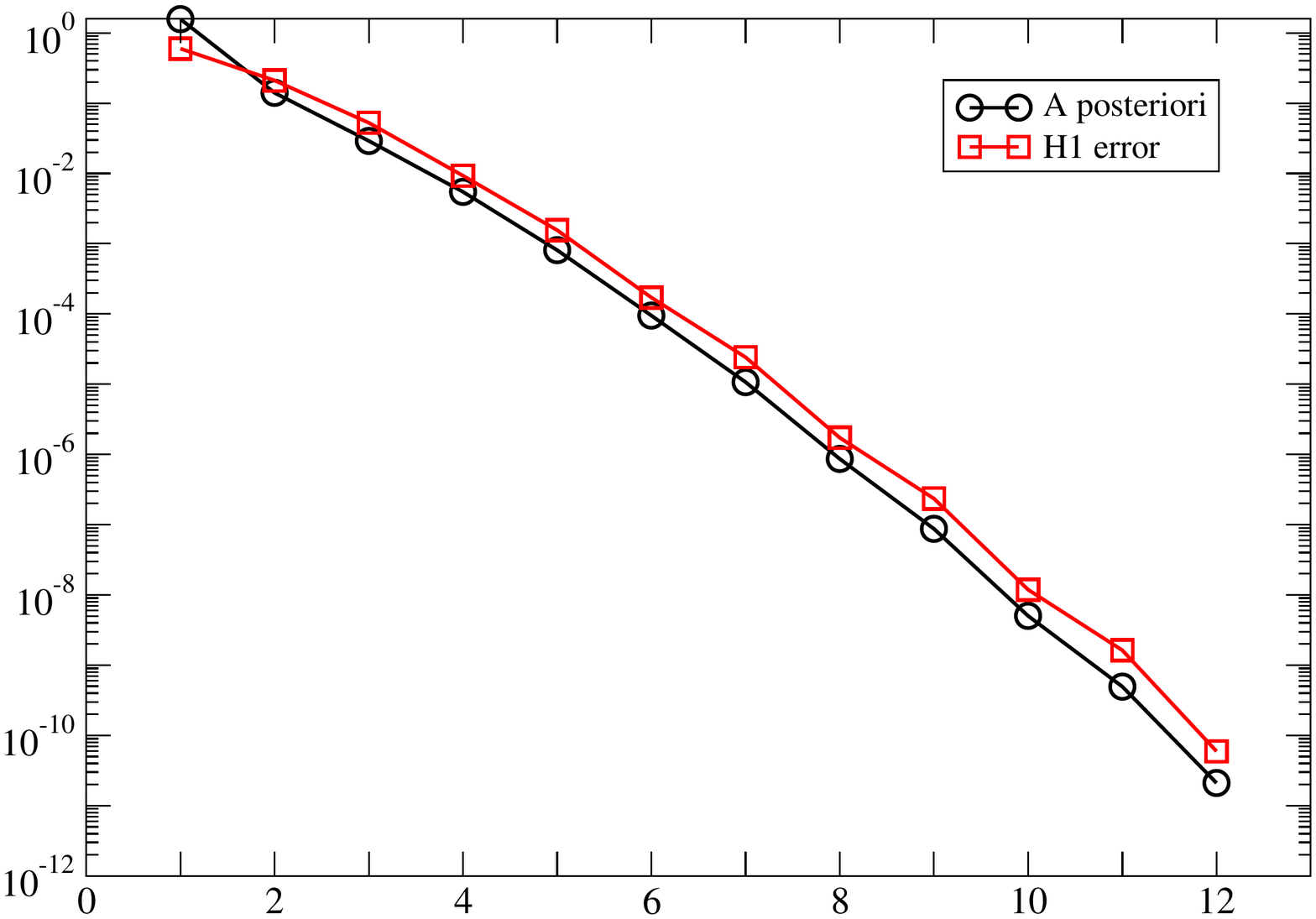} \put(-103,10){\p}}
\subfigure {\includegraphics [draft=false, angle=0, width=0.48\textwidth]{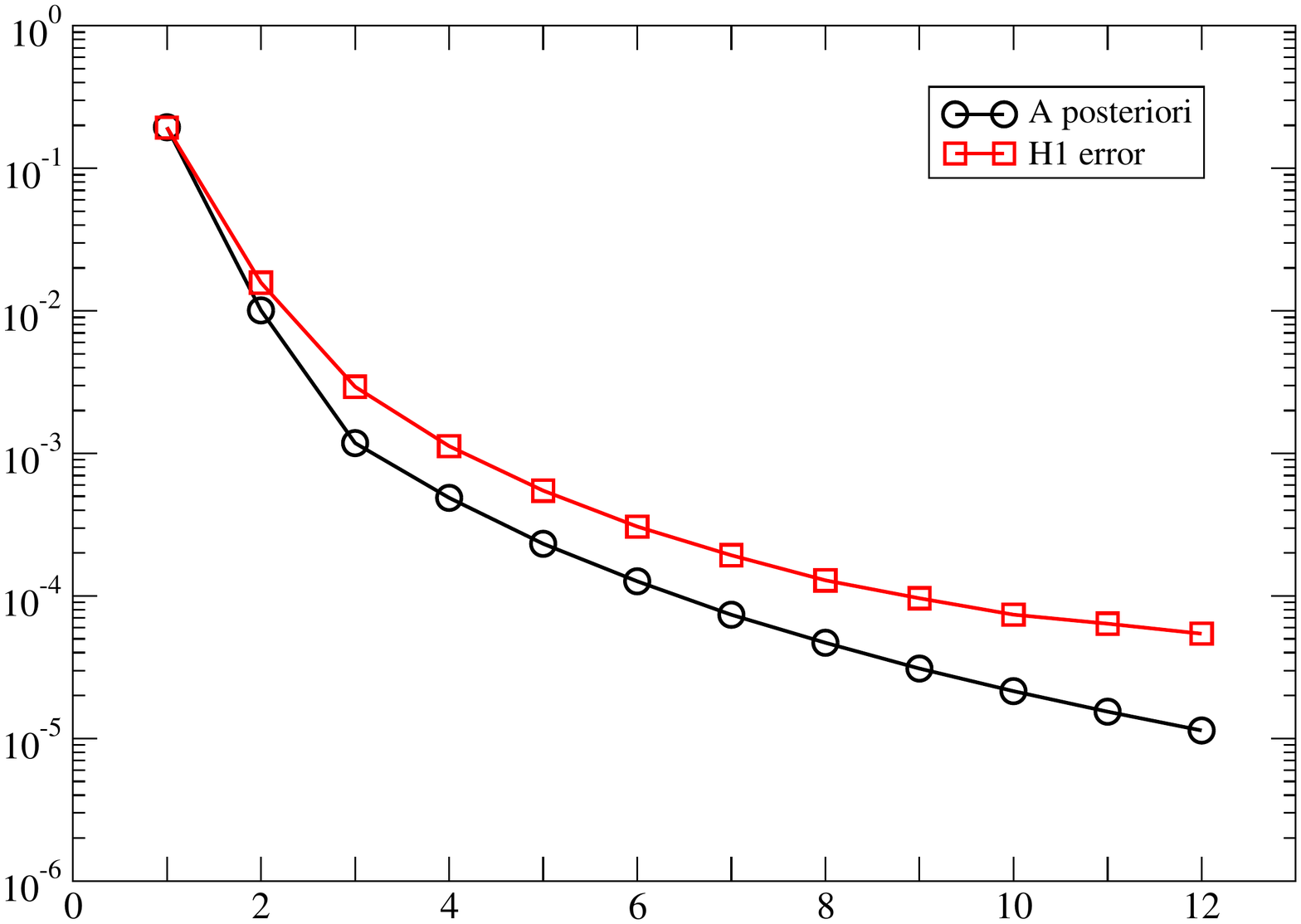} \put(-103,10){\p}}
\subfigure {\includegraphics [draft=false, angle=0, width=0.48\textwidth]{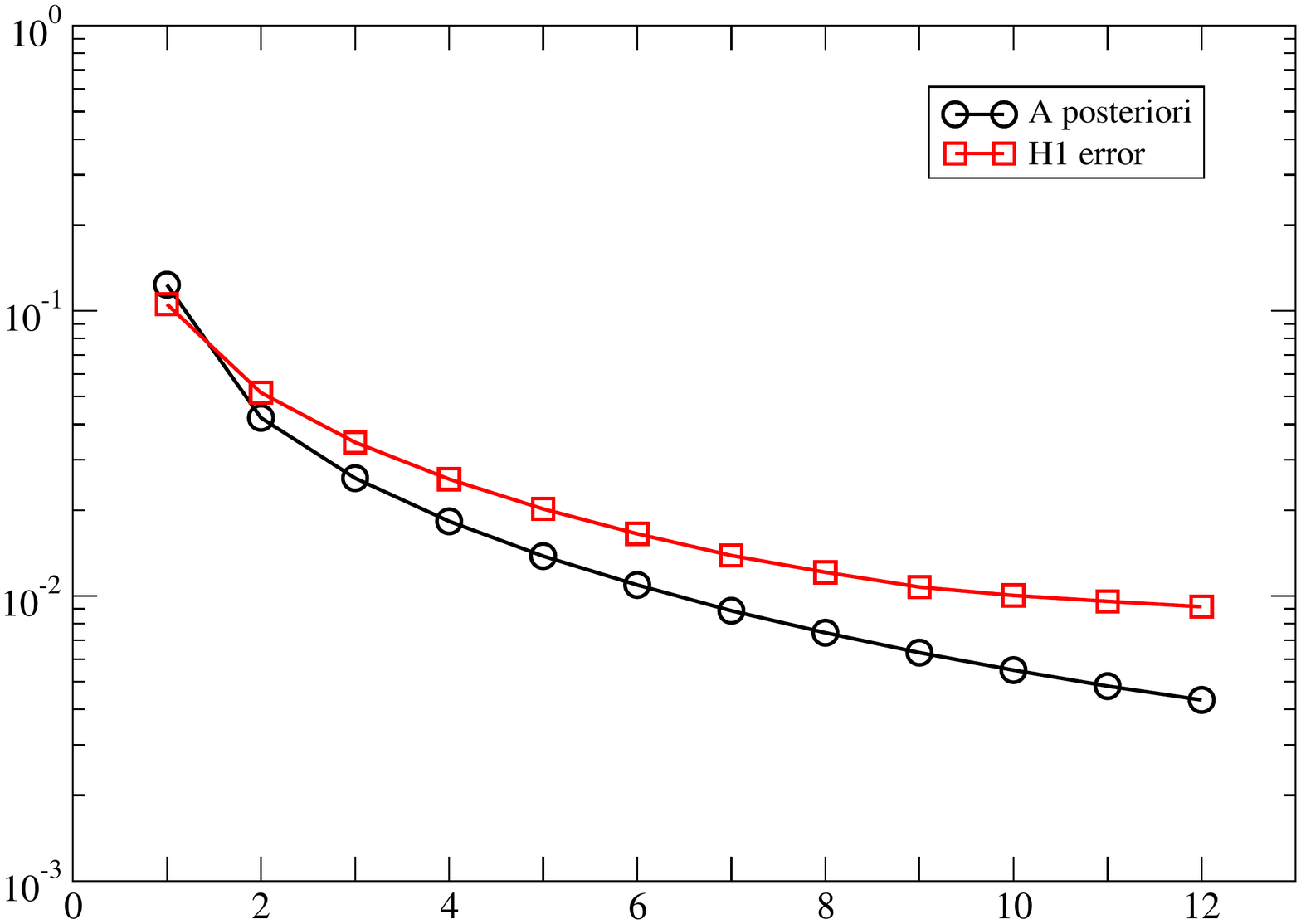} \put(-103,10){\p}}
\caption{Voronoi mesh. Computable error \eqref{computable error} versus error estimator \eqref{actual error estimator} in terms of $p$. Left: solution $u_1$. Right: solution $u_2$. Below: solution $u_3$.} \label{figure bonta Voronoi}
\end{figure}

We observe that, when approximating analytic solutions, the error estimator and the computed error behave practically in the same way.
For singular functions, the behaviour is instead slightly different and in particular the error estimator is slightly smaller than the computed error.
This is due to the use of inverse estimates producing extra factors of~$\p$ and to the pollution effect of the stabilization, see \eqref{hp VEM upper bound}-\eqref{hp VEM lower bound};
however, if the solution is analytic, the pollution factors due to the inverse estimates and the stabilization are negligible, since all the local error estimators, as well as the exact error, decrease exponentially in terms of $\p$, see \cite{hpVEMbasic}.

%%%%%%%%%%%%%%%%%%%%%%%%%%%%%%%%%%%%%%%%%%%%%%%%%%%%%%%%%%%%%%%%%%%%%%%%%%%
\subsection{The $\h\p$ adaptive refinement strategy} \label{subsection refinement strategy}
In this section, we present an $\h\p$ refinement strategy, based on the standard procedure
\[
\textrm{SOLVE} \rightarrow \textrm{ESTIMATE} \rightarrow \textrm{MARK}\rightarrow\textrm{REFINE}.
\]
Before discussing the $\h\p$ refinement strategy, we describe how to perform $\h$ refinements.
The standard strategy that one can follow consists in subdividing a polygon $\E$ with $N^\E$ ``edges'' into a number of ``quadrilaterals'' smaller than or equal to $N^\E$ obtained by connecting the barycenter of $\E$ to the midpoints of its ``edges''.
Note that %, whenever two edges belong to the same straight line, the procedure is slightly modified: in such case we connect the barycenter with the resulting hanging node, see Figure \ref{figure standard h refinement}.
by ``edge'' we mean a straight line on the boundary of a polygonal element, possibly consisting of more edges belonging to the polygonal decomposition; in particular, an edge could contain hanging nodes.
Besides, by ``quadrilateral'', we mean polygons with four ``edges'', that is, such ``quadrilateral'' could have more than four edges, but precisely four ``edges''. See Figure~\ref{figure standard h refinement}.

We observe that this procedure may generate very small edges, thus contradicting the assumption (\textbf{D2}).
Notwithstanding, we will not experience in the forthcoming numerical experiments any loss in accuracy due to the presence of small edges, which in any case appear rarely.
\begin{figure}[h]
\centering
\begin{minipage}{0.32\textwidth}
\begin{center}
\begin{tikzpicture}[scale=1.2]
\draw[black, thick, -] (-1,0) -- (1,0) -- (2,2) -- (0,4) -- (-2,2) -- (-1,0);
\draw[black, thick, -] (-1,0) -- (-1.5,-0.5); \draw[black, thick, -] (1,0) -- (1.5, -0.5); \draw[black, thick, -] (0,4) -- (0, 4.5); \draw[black, thick, -] (2,2) -- (2.5,2); \draw[black, thick, -] (-2,2) -- (-2.5,2);
\draw[black, dashed] (0,8/5) -- (3/2,1); \draw[black, dashed] (0,8/5) -- (1, 3); \draw[black, dashed] (0, 8/5) -- (-1, 3); \draw[black, dashed] (0, 8/5) -- (-3/2, 1);
\draw[black, dashed] (0, 8/5) -- (0, 0);%\draw[black, dashed] (0, 8/5) -- (1/3,0);
\draw[fill=red] (-1,0) circle (3pt);%\draw[fill=red] (-1/3,0) circle (3pt);
\draw[fill=red] (0,0) circle (3pt);  \draw[fill=red] (1,0) circle (3pt); 
\draw[fill=red] (2,2) circle (3pt); \draw[fill=red] (0,4) circle (3pt); \draw[fill=red] (-2,2) circle (3pt); 
\end{tikzpicture}
\end{center}
\end{minipage}
\quad\quad\quad\quad\quad\quad\quad\quad\quad\quad\quad
\begin{minipage}{0.32\textwidth}
\begin{center}
\begin{tikzpicture}[scale=1.2]
\draw[black, thick, -] (-0.4,0) -- (4.4,0); \draw[black, thick, -] (0, -0.4) -- (0, 4.4); \draw[black, thick, -] (4,-.4) -- (4,4.4); \draw[black, thick, -] (-0.4,4) -- (4.4,4);
\draw[black, dashed] (2,2) -- (2,0);  \draw[black, dashed] (2,2) -- (2,4);  \draw[black, dashed] (2,2) -- (0,2);  \draw[black, dashed] (2,2) -- (4,2); 
\draw[fill=red] (0,0) circle (3pt); \draw[fill=red] (4,0) circle (3pt); \draw[fill=red] (0,4) circle (3pt); \draw[fill=red] (4,4) circle (3pt); \draw[fill=red] (2,0) circle (3pt); \draw[fill=red] (1,0) circle (3pt); \draw[fill=red] (0.5,0) circle (3pt); \draw[fill=red] (0.25,0) circle (3pt);
\end{tikzpicture}
\end{center}
\end{minipage}
\caption{Standard~$\h$ refinement strategy. One connects the barycenter of the element with the midpoints of every ``edge''.
As an exception, whenever two or more edges belong to the same straight line (``edge''), one connects the barycenter with the midpoint of such straight line.
The red dots denote the vertices; the original polygon is denoted by solid lines; the new ``quadrilateral'' are highlighted with dashed lines.}
\label{figure standard h refinement}
\end{figure}
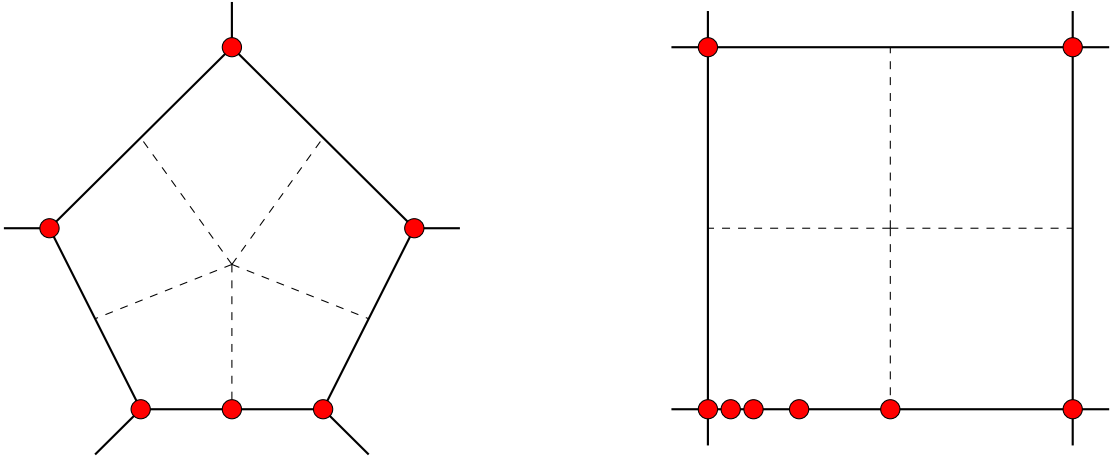

We stress that this procedure can be performed only on elements containing their barycenter (e.g. convex elements), but, importantly, starting from a convex element this construction generates only convex elements.
Besides, the hanging nodes popping-up with this strategy fit naturally in the polygonal framework of VEM.
Note that, when dealing with Cartesian meshes, the refinement strategy of Figure \ref{figure standard h refinement} reduces to standard square refinement.

%When employing Cartesian meshes, we also propose a different recipe, which consists in dividing each square into four equivalent squares, see Figure \ref{figure h refinement square}.
%\begin{figure}[h]
%\centering
%\begin{minipage}{0.32\textwidth}
%\begin{center}
%\begin{tikzpicture}[scale=1.2]
%\draw[black, thick, -] (0,0) -- (4,0) -- (4,4) -- (0,4) -- (0,0);
%\draw[black, thick, -] (0,0) -- (-0.5,-0.5); \draw[black, thick, -] (4,0) -- (4.5, -0.5); \draw[black, thick, -] (4,4) -- (4.5, 4.5); \draw[black, thick, -] (0,4) -- (-.5,4.5);
%\draw[black, dashed] (2,0) -- (2,4); \draw[black, dashed] (0,2) -- (4, 2);
%\draw[fill=red] (0,0) circle (3pt); \draw[fill=red] (4,0) circle (3pt); \draw[fill=red] (4,4) circle (3pt);  \draw[fill=red] (0,4) circle (3pt); 
%\end{tikzpicture}
%\end{center}
%\end{minipage}
%\caption{$\h$ refinement strategy on Cartesian meshes. One connects the barycenter of the element with the midpoints of every edge. Whenever two or more edges belong to the same straight line, one connects the barycenter with the vertices internal to the straight line.
%Red dots denote the vertices; the original polygon is denoted by solid lines.}
%\label{figure h refinement square}
%\end{figure}
We now briefly review the $\h\p$ refinement strategy of~\cite[Section 4.2]{MelenkWohlmuth_hpFEMaposteriori}, here adapted to the present method.
The basic idea behind this approach is that, on the elements on which the exact solution to the continuous problem is ``regular'', one performs $\p$ refinement
(since the $\p$ version of the method leads to exponential convergence of the error in terms of $\p$ when approximating analytic solutions, see~\cite{hpVEMbasic}),
whereas, on the elements where the solution has a ``singular'' behaviour, one performs $\h$ refinement
(since geometric mesh refinements lead to exponential convergence of the error in terms of the number of degrees of freedom when approximating singular functions, cf. \cite{hpVEMcorner}).

%In particular, we recall from \cite{hpVEMbasic,hpVEMcorner}, the following local $\h$ and $\p$ error estimates with respect to functions in the virtual element space:
%\begin{equation} \label{optimistic hp bounds}
%\Err _{\pE} =
%\begin{cases}
%\vert \u -\uI \vert_{1,\E} \lesssim \h^{\pE} \Vert \u \Vert_ {\pE+1,\E}\\
%\vert \u - \uI \vert_{1,\E} \lesssim \exp{(-b\,\p)}\\
%\end{cases}
%\quad \forall \uI \in \VnE,
%\end{equation}
%where we are assuming that $\u$, the solution to \eqref{Poisson problem homogeneous Dirichlet weak formulation}, is analytic.

The $\h\p$ refinement strategy reads as follows.
At the $n$-th level of the algorithm, one marks the elements on which the error estimator is sufficiently large. To this end,
one marks all the elements such that $\eta^2_{comp,\E,n} \ge \sigma \etabarn^2$ for some $\sigma\in (0,1)$, where
\begin{equation} \label{average error estimator}
\etabarn^2 = \frac{1}{\text{card}(\taun)} \eta_{comp,n}^2,
\end{equation}
being $\eta^2_{comp,\E,n}$ and $\eta_{comp,n}$ computed as in \eqref{actual error estimator} for all $\E \in \taun$ and for all $n \in \mathbb N$.

Let us now be given, at the $n$-th adaptive refinement step, a ``prediction-estimator'' $\etapredpE$ on each element $\E$ of the mesh $\taun$. Such estimator is instrumental in order to heuristically decide whether (elementwise) the exact solution is regular or not.

If one performs on element $\E$ an $\h$ refinement, then the expectations are, under the hypothesis of analyticity of $\u$,
that the error reduces by a factor decreasing exponentially in terms of $\p$, see \cite{hpVEMbasic,hpVEMcorner}.
%that on each son-element $\ES$ of $\E$, it holds \cite{hpVEMbasic,hpVEMcorner}:
%\[
%\Err_{\p_{\ES}} \approx \left( \frac{\h_{\ES}}{\hE} \right)^{\pE} \Err_{\pE}
%\approx \left( \frac{1}{2} \right)^{\pE} \Err_{\pE} .
%\]
Therefore, given $N^\E$ the number of son elements of $\E$, one sets the prediction-estimator $\etapredpEi$ on $\Ei$ to:
\[
\eta_{\text{pred}, \ES ,n+1}^2 = \frac{1}{N^\E}\gamma_\h (0.5)^{2\pE} \eta^2_{comp,\E,n},
\]
for some $\gamma_\h$ fixed positive parameter. Note that the term $0.5$ in the above equation stems from the fact that we are roughly dividing the element diameter $h_K$ by two when performing the procedure in Figure~\ref{figure standard h refinement}.

Instead, if one performs on element $\E$ a $\p$ refinement, one expects, under the hypothesis of analyticity of $\u$, a reduction of the error by a fixed factor $\gamma_\p$.
Thus, one sets the prediction-estimator $\eta_{\text{pred},\pE+1}$ on $\E$ to:
\[
\eta_{\text{pred}, \E ,n+1}^2 = \gamma_\p \, \eta^2_{comp,\E,n},
\]
for some $0 < \gamma_\p < 1$ fixed parameter.

On the elements that are not marked for refinement, one updates in a trivial fashion as
\[
\eta_{\text{pred}, \E, n+1}^2 = \gamma_n \, \eta_{\text{pred}, \E, n}^2,
\]
for some $\gamma_n$ fixed positive parameter.

So far, we have constructed, at the $n$-th level of the $\h\p$ refinement process, a set of prediction-estimators for the $n+1$-th level;
as already underlined, such prediction-estimators have the scope of ``guessing'' elementwise the behaviour of the exact solution.% on element $\E$ at level $n+1$ employing a given local degree of accuracy $\pE$.

What one does at this point is that on all marked elements he checks whether $$\eta^2_{comp,\E, n} \ge  \, \eta^2_{pred,\E,n}.$$
If this is the case, the actual error estimator is ``larger'' than the predicted one, where we have assumed that the solution was analytic;
this means that the solution is not sufficiently ``regular'' on the element and therefore an $\h$ refinement has to be performed.
The $\p$ refinement is effectuated otherwise.

Since in the first refinement step the prediction-estimators are not available (since there is not a 0 level in the adaptive construction of the spaces),
the $\h\p$ refinement boils down to a simple $\h$ refinement. To this purpose, in the first iteration of the adaptive algorithm, it suffices to set $\eta^2_{pred,\E,0} = \frac{\eta^2_{comp,\E,0}}{2}$ on all elements $\E$.

\medskip

In Algorithm \ref{algorithm hp refinement strategy}, we present the $\h\p$ refinement process discussed so far.

\begin{algorithm}[H]
\caption{$\h\p$ refinement algorithm.}
\label{algorithm hp refinement strategy}
\begin{algorithmic}
\State given fixed positive parameters $\sigma$, $\gamma_\h$, $\gamma_\p$, and $\gamma_n$:
\State fix $\eta^2_{pred, \E, 0} = \frac{\eta^2_{comp, \E, 0}}{2}$ on each $\E\in \taun$;
\For {$n\in \mathbb N$ (until some stop criterion is fulfilled)}
\If{$\eta_{comp, \E, n}^2\ge \sigma \etabarn^2$}
\State mark element $\E$ for refinement
\If{$\eta_{comp, \E, n}^2 \ge  \eta^2_{pred,\E,n}$}
\State $\h$ refinement ($\E$ subdivided into $N^\E$ sons $\E_S$)
\State $\eta_{\text{pred}, \E_S, n+1}^2 = \frac{1}{N^\E}\gamma_\h \, (0.5)^{2\pE} \eta^2_{comp, \E, n}$
\Else
\State $\p$ refinement
\State degree of accuracy on $\E$ increased by $1$
\State $\eta^2_{pred,\E,n+1} = \gamma_\p\, \eta^2_{comp,\E,n}$
\EndIf
%\ElsIf{$\eta_{\pE} \le \mu\,\etabarn$}
%\State mark element $\E$ for derefinement
%\If{$\pE \ge 2$}
%\State $\p$ derefinement (decrease degree of accuracy by $1$)
%\State $\eta_{\text{pred},\pE-1}= \frac{1}{\gamma_\p} \etapredpE$
%\Else{}
%\State compute $n=$ \# neighbouring elements to $\Ei$ marked for derefinement with $\pEi =1$
%\If{$n>1$}
%\State agglomerate the $n$ neighbouring elements with degree of accuracy $1$
%\State $\eta_{\text{pred}, \p_{\E_{\text{father}}}}^2=\sum_{\Ei \in \taun} \etapredpEi^2$
%\Else
%\State nothing is done
%\EndIf
%\EndIf
\Else{}
\State no refinement 
\State $\eta^2_{pred,\E,n+1} = \gamma_n\, \eta^2_{pred,\E,n}$
\EndIf
\EndFor
\end{algorithmic}
\end{algorithm}
\begin{remark} \label{remark small edges}
It may occur within the adaptive algorithm that the assumptions (\textbf{D2}), guaranteeing the presence of edges with size comparable to that of the element to whom they belong, and (\textbf{P1}), guaranteeing comparable degrees of accuracy on neighbouring elements, are not valid.
However, from the forthcoming numerical experiments, it is evident that the method is  robust in this respect.
\end{remark}
\begin{remark} \label{remark australia}
The approach of Algorithm \ref{algorithm hp refinement strategy} is not the only one available in the framework of $\h\p$ Galerkin methods.
We refer to \cite{mitchell2011survey}  for an overview of different $\h\p$ refinement approaches.
\end{remark}
In our experiments, we set the parameters in Algorithm \ref{algorithm hp refinement strategy} to
\cite{MelenkWohlmuth_hpFEMaposteriori}
\[
\sigma=0.75,\quad \gamma_\h = N^{\E},\quad \gamma_\p =0.4,\quad \gamma_n=1,
\]
where we recall that $N^\E$ is the number of elements into which polygon $\E$ is split in case of $\h$ refinement.

Moreover, we test the method on the test case with known solution $u_3$, see \eqref{3 solutions bonta}, and
\begin{equation} \label{u4}
u_4(x,y) = x(1-x)y(1-y)\exp{(-100(y-0.5)^2-100(x-0.5)^2)} \quad \text{in } \Omega_1,
\end{equation}
where we recall that $\Omega_1$ is the square domain introduced in \eqref{3 solutions bonta}.
We underline that the solution $u_4$ is analytic but has a steep derivatives around $(0.5,0.5)$.

\begin{remark} \label{remark on pollution factor}
Algorithm \ref{algorithm hp refinement strategy} is based on $\h$ refinements where the solution is assumed to have a ``singular'' behaviour, whereas it is based on $\p$ refinements where the solution is assumed to be ``smooth''.
Therefore, the pollution factor $\max(\alpha_*^{-1}(\p), \alpha^*(\p))$ appearing in the estimates of Theorem \ref{theorem hp VEM a posteriori error estimates}, behaves in the following \emph{heuristic} fashion.
It does not blow up when doing $\h$ refinements since it is independent of $\h$.
Instead, when doing $\p$ refinements, it multiplies the factor $\zetaE$ defined in \eqref{local error estimators} which, since we are assuming that the solution is analytic in case of $\p$ refinements, is converging exponentially, in terms of $\p$.
Thus, the pollution factor, which typically grows algebraically in terms of $\p$, see \cite[Lemma 2.5]{pVEMmultigrid} and \cite[Theorem 2]{hpVEMcorner}, is not in principle spoiling the behaviour of the adaptive scheme. This observation is confirmed by the numerical experiments below.
\end{remark}

In Figures \ref{figure apos hp u3 square} and \ref{figure apos hp u4 square}, we show the $\h\p$ mesh refinement after 3 and 10 steps of Algorithm \ref{algorithm hp refinement strategy} applied
to the two test cases with known solution $u_3$ and $u_4$, starting from a coarse Cartesian mesh.
\begin{figure}  [h]
\centering
\subfigure {\includegraphics [draft=false, angle=0, width=0.48\textwidth]{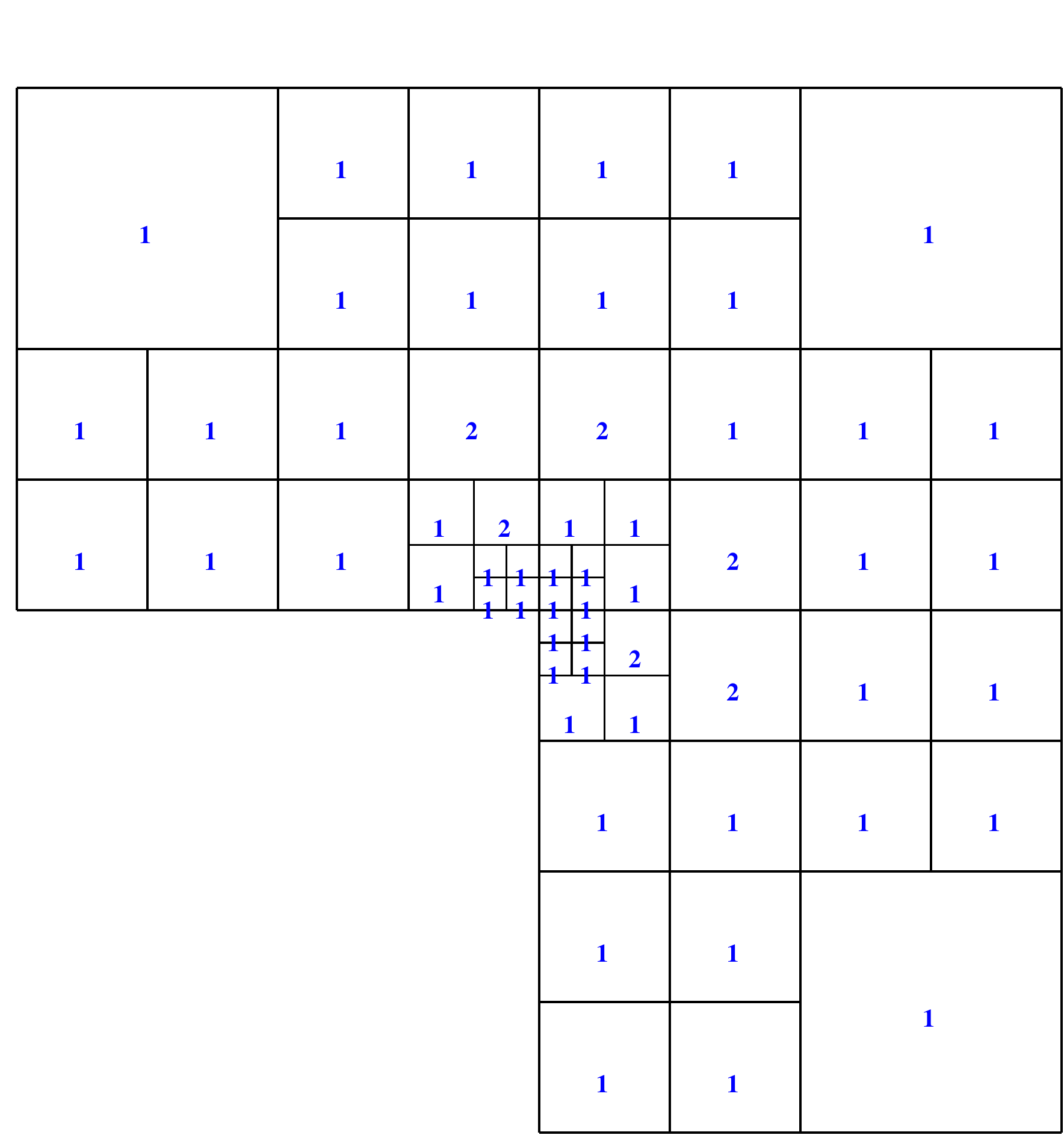}}
%\subfigure {\includegraphics [draft=false, angle=0, width=0.48\textwidth]{figs/square_u3_mesh2D_6.eps}}
\subfigure {\includegraphics [draft=false, angle=0, width=0.48\textwidth]{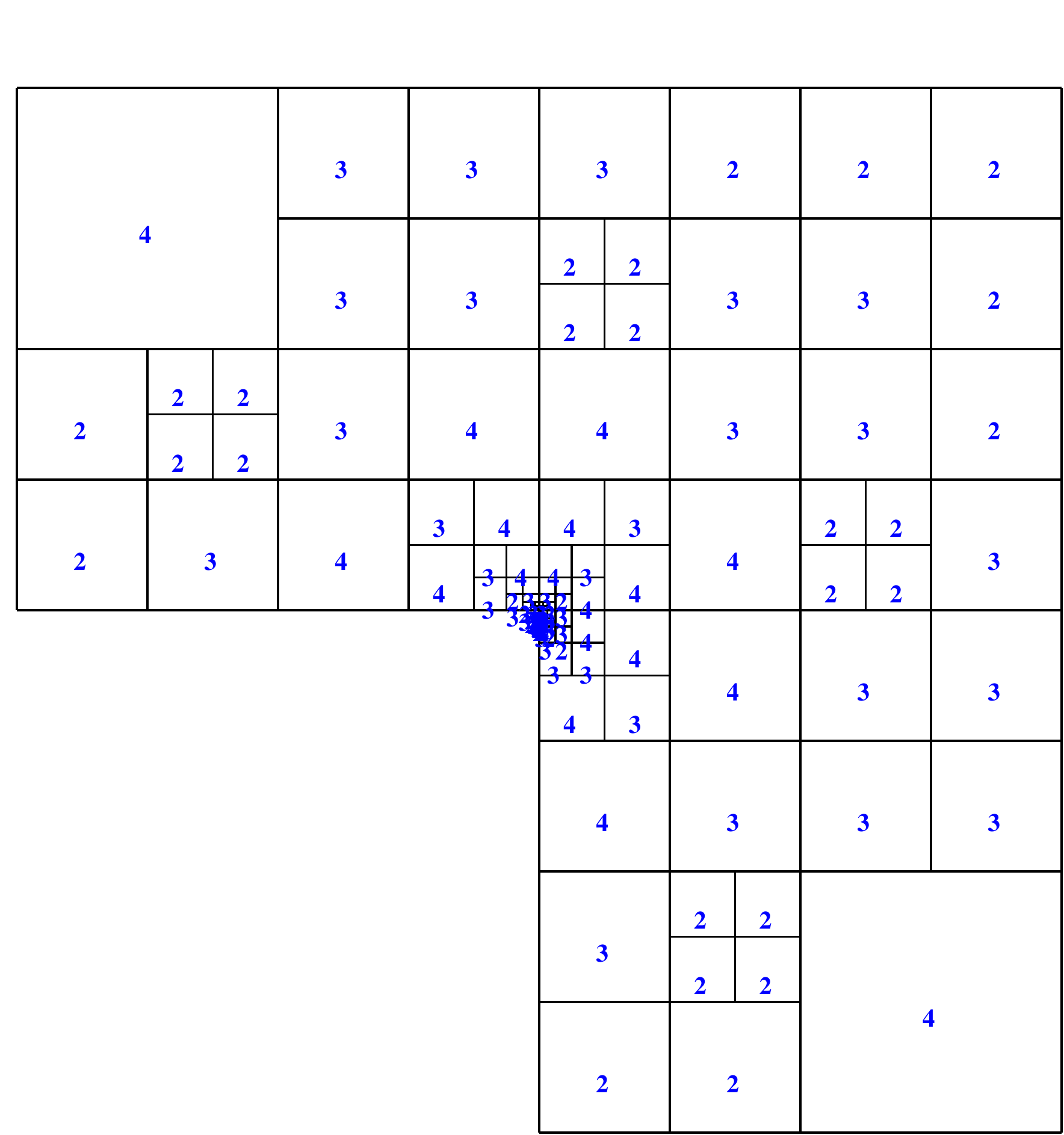}}
\caption{Steps 3 and 10 of the $\h\p$ refinement algorithm applied to the test case with known solution $u_3$ introduced in \eqref{3 solutions bonta}. Starting mesh: a coarse Cartesian mesh. Refinement strategy as in Figure \ref{figure standard h refinement}.}
\label{figure apos hp u3 square}
\end{figure}
\begin{figure}  [h]
\centering
\subfigure {\includegraphics [draft=false, angle=0, width=0.48\textwidth]{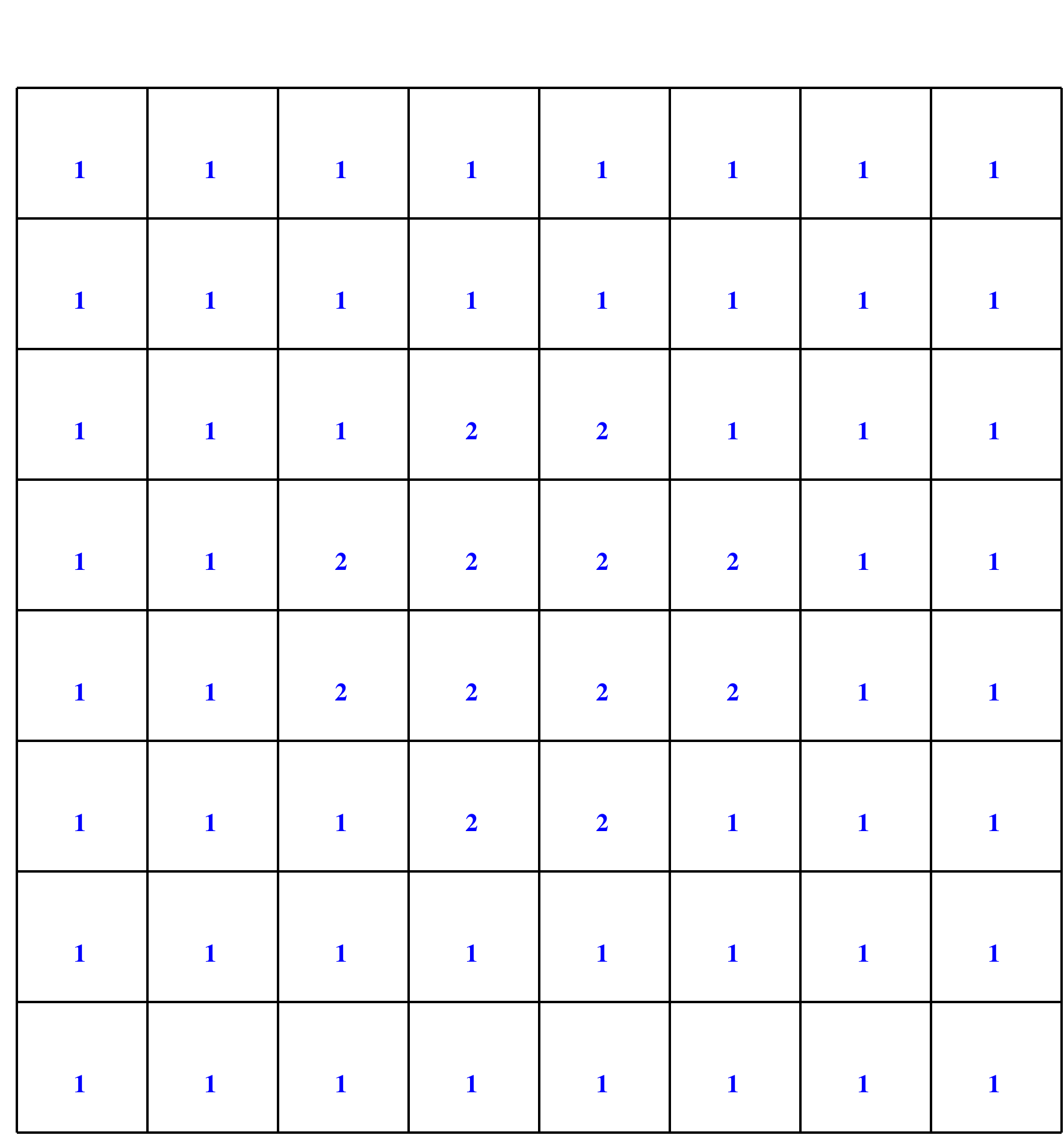}}
%\subfigure {\includegraphics [draft=false, angle=0, width=0.32\textwidth]{figs/square_u4_mesh2D_6.eps}}
\subfigure {\includegraphics [draft=false, angle=0, width=0.48\textwidth]{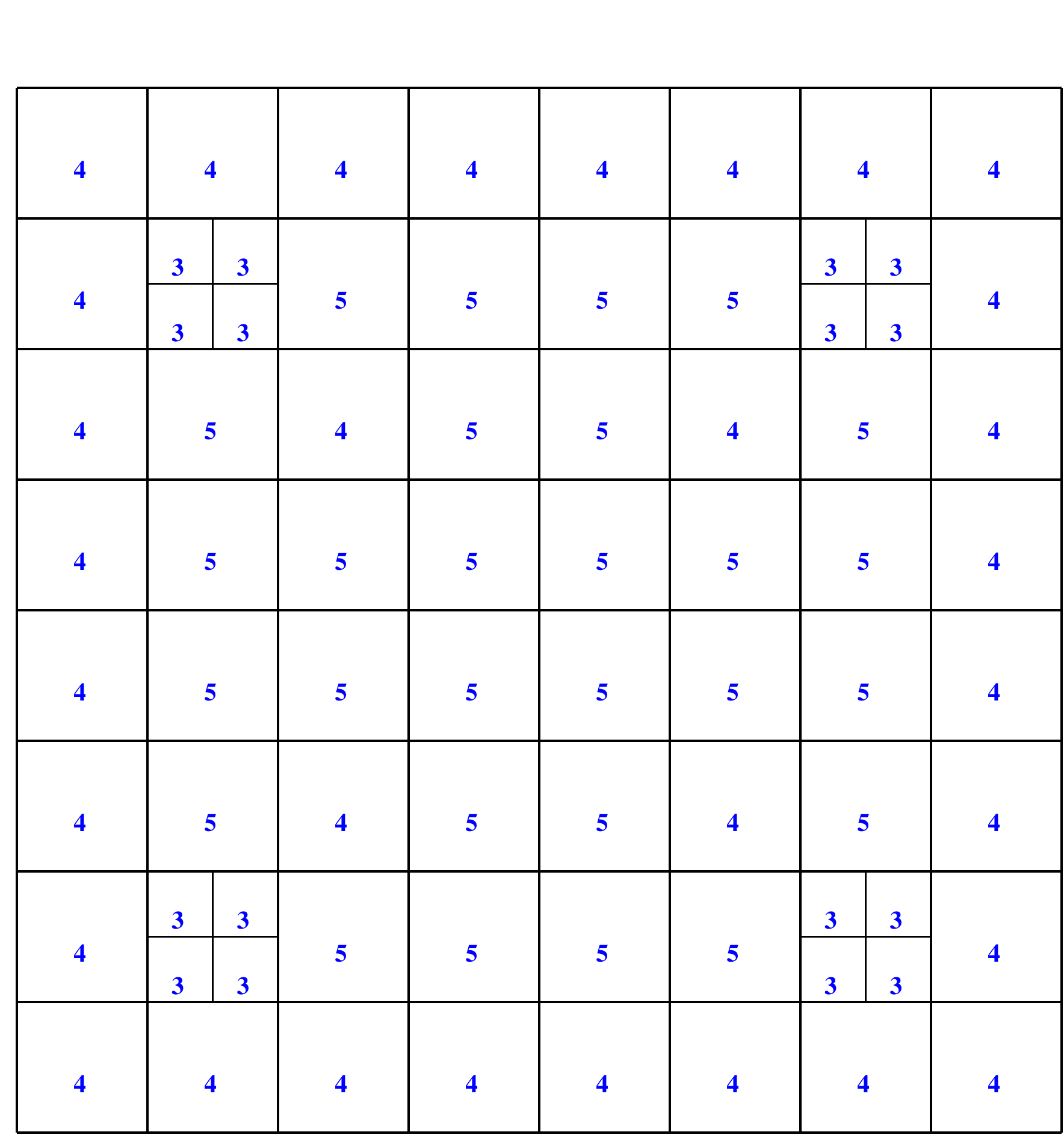}}
\caption{Steps 3 and 10 of the $\h\p$ refinement algorithm applied to the test case with known solution $u_4$ introduced in \eqref{u4}. Starting mesh: a coarse Cartesian mesh. Refinement strategy as in Figure \ref{figure standard h refinement}.}
\label{figure apos hp u4 square}
\end{figure}
Furthermore, we depict in Figures \ref{figure apos hp u3 Voronoi} and \ref{figure apos hp u4 Voronoi} the same set of experiments starting from a coarse Voronoi mesh.
In both cases, the $\h$ refinements are performed as in Figure \ref{figure standard h refinement}.
\begin{figure}  [h]
\centering
\subfigure {\includegraphics [draft=false, angle=0, width=0.488\textwidth]{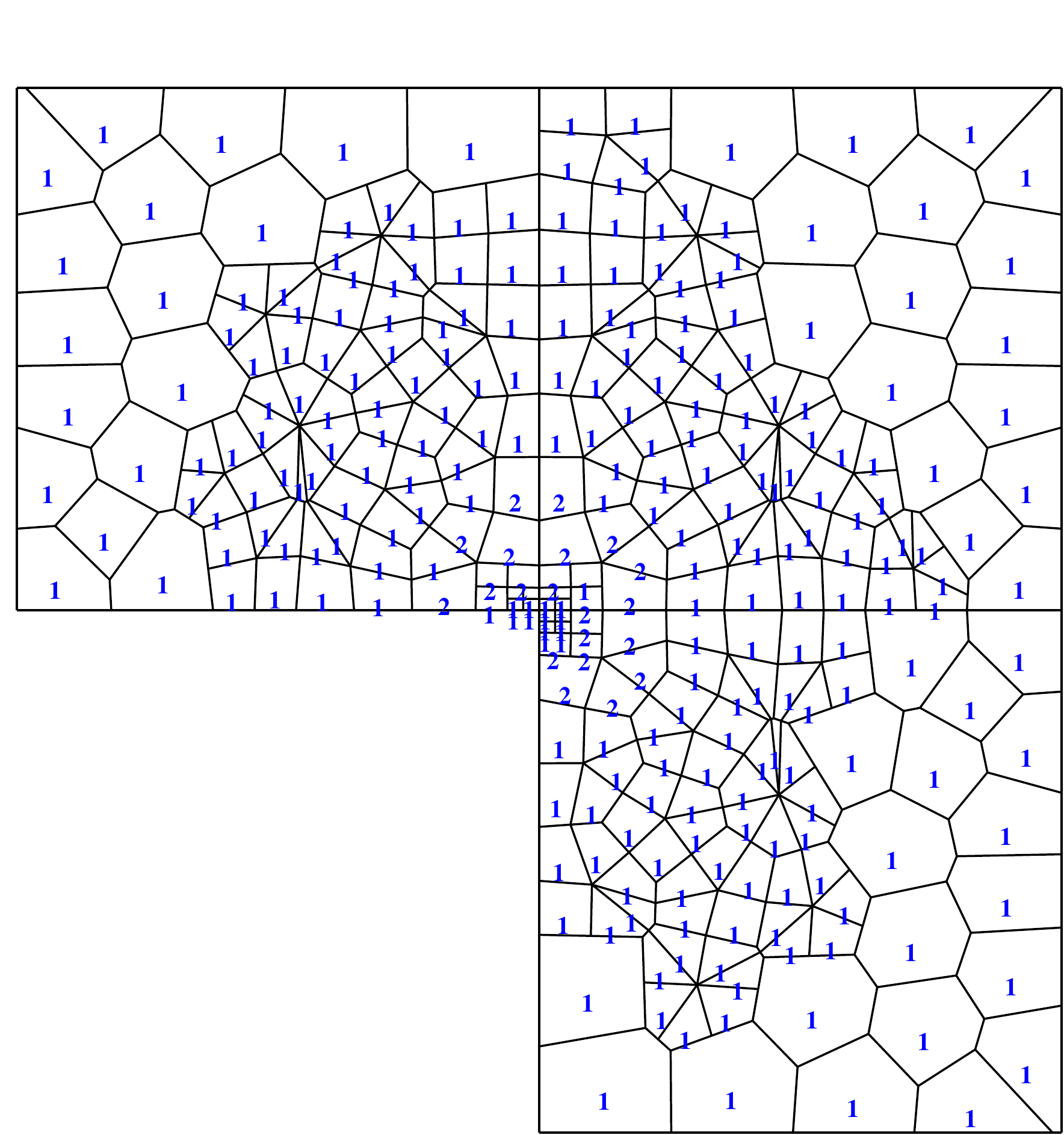}}
%\subfigure {\includegraphics [draft=false, angle=0, width=0.48\textwidth]{figs/voro_u3_mesh2D_6.eps}}
\subfigure {\includegraphics [draft=false, angle=0, width=0.48\textwidth]{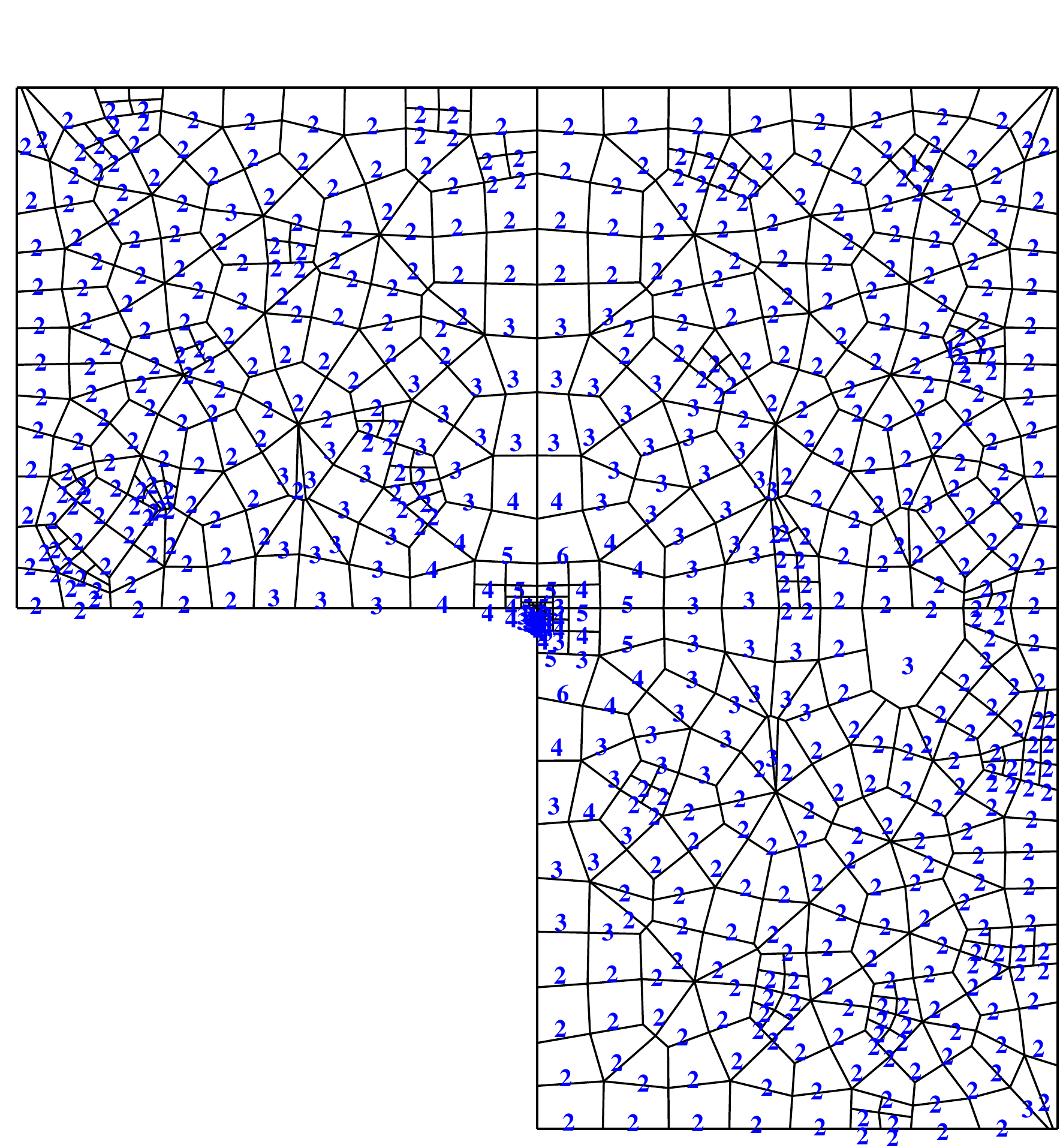}}
\caption{Steps 3 and 10 of the $\h\p$ refinement algorithm applied to the test case with known solution $u_3$ introduced in \eqref{3 solutions bonta}. Starting mesh: a coarse Voronoi mesh. Refinement strategy as in Figure \ref{figure standard h refinement}.}
\label{figure apos hp u3 Voronoi}
\end{figure}
\begin{figure}  [h]
\centering
\subfigure {\includegraphics [draft=false, angle=0, width=0.48\textwidth]{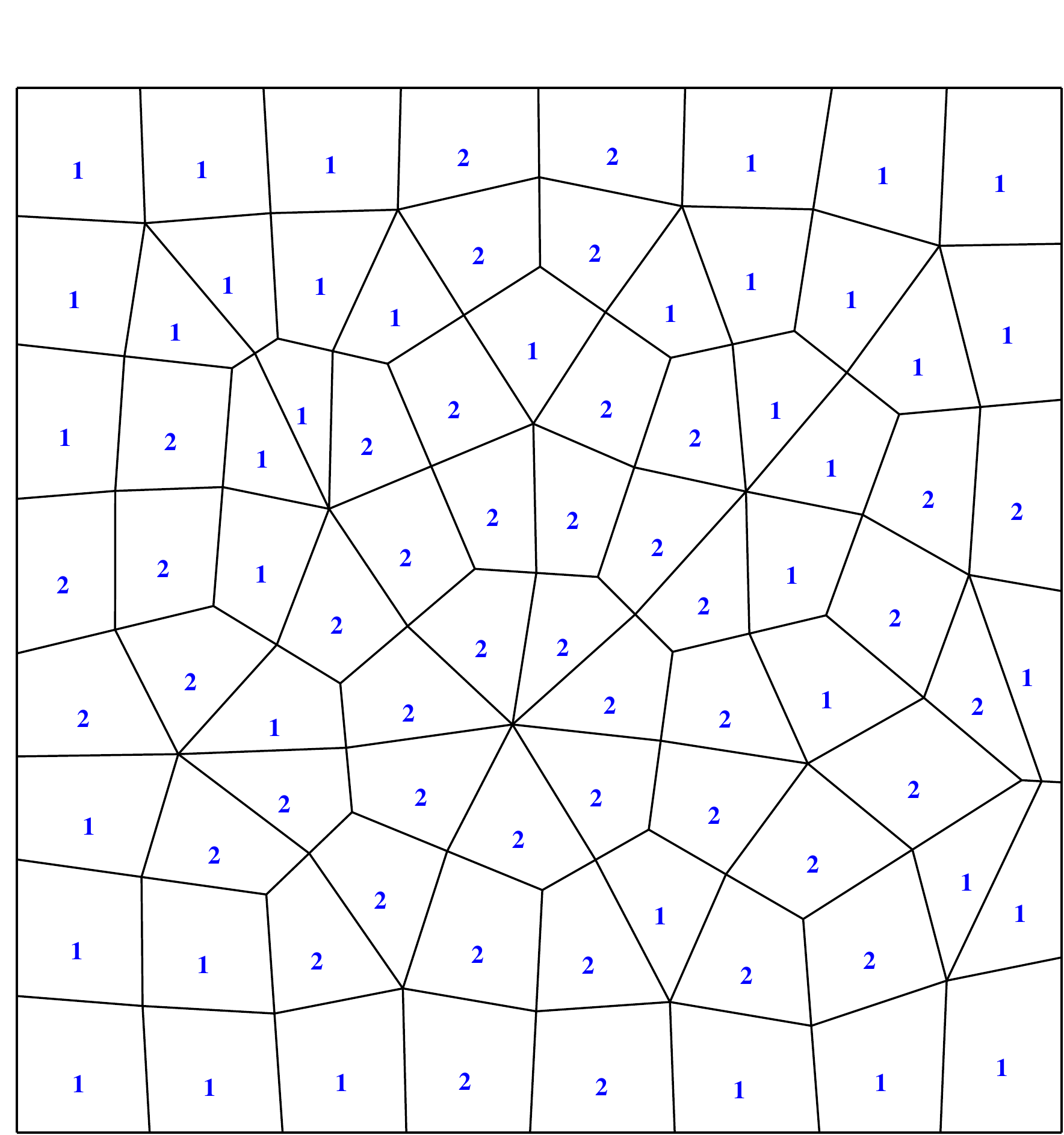}}
%\subfigure {\includegraphics [draft=false, angle=0, width=0.32\textwidth]{figs/voro_u4_mesh2D_6.eps}}
\subfigure {\includegraphics [draft=false, angle=0, width=0.48\textwidth]{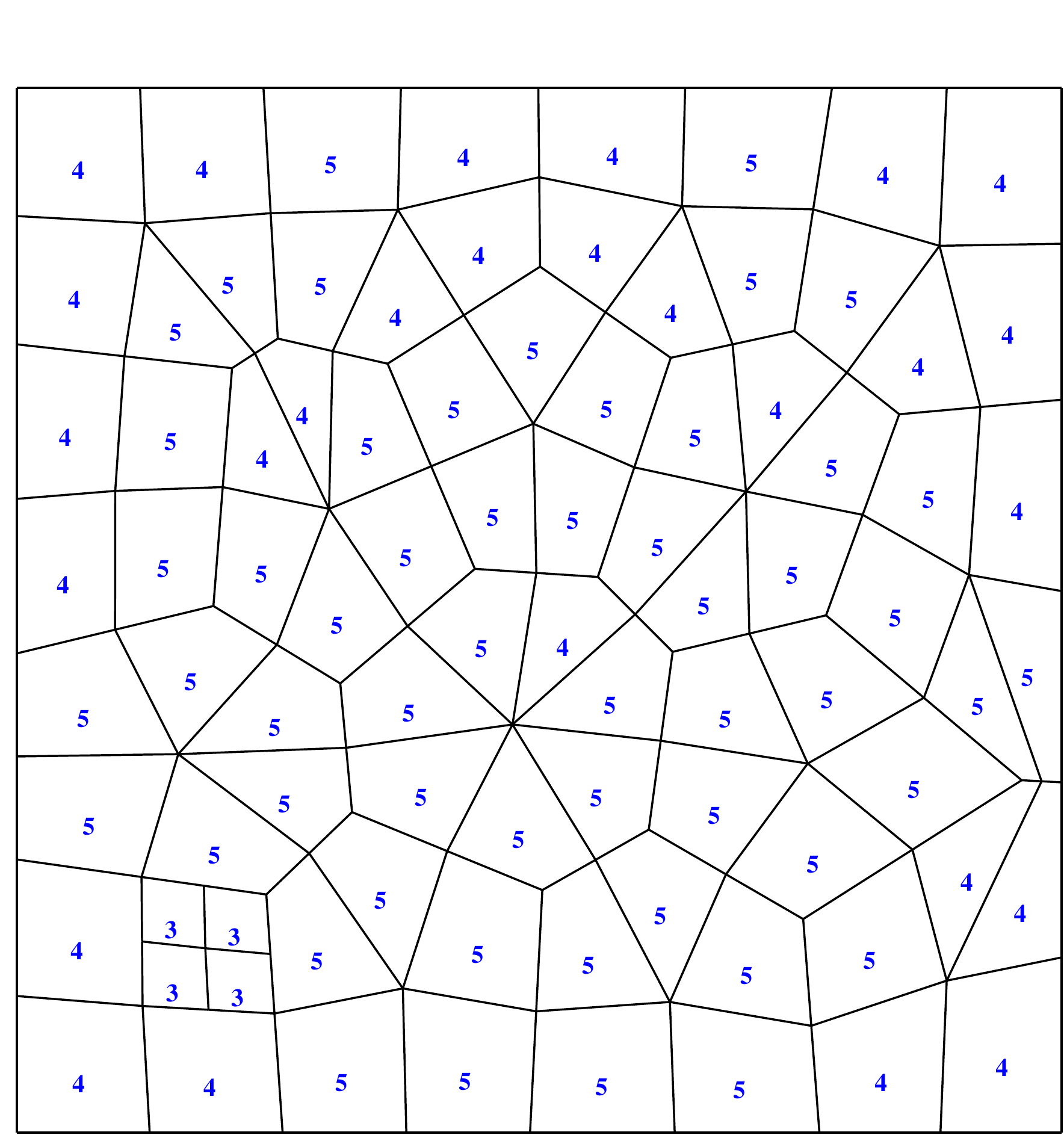}}
\caption{Steps 2 and 10 of the $\h\p$ refinement algorithm applied to the test case with known solution $u_4$ introduced in \eqref{u4}. Starting mesh: a coarse Voronoi mesh. Refinement strategy as in Figure \ref{figure standard h refinement}.}
\label{figure apos hp u4 Voronoi}
\end{figure}

From Figures \ref{figure apos hp u3 square} and \ref{figure apos hp u3 Voronoi}, it is possible to appreciate the fact that Algorithm \ref{algorithm hp refinement strategy} is actually catching the singular behaviours of the solution $u_3$.
In particular, the mesh is geometrically refined towards such singularity.  Note that, taking into account that the degrees of accuracy increase where the solution is smooth, 
it may seem that $\p$ refinement are effectuated also towards the re-entrant corner. This is not what actually happens, since lots of geometric refinements are performed and high degree of accuracy is picked only in outer layers.

Regarding Figures~\ref{figure apos hp u4 square} and~\ref{figure apos hp u4 Voronoi}, we can observe that the algorithm indeed detects that the target function is regular and therefore mainly applies $\p$ refinements; such refinements are slightly more concentrated in the area with higher derivatives.
In order to have a more quantitative evaluation on the algorithm performance, we plot in Figure \ref{new-fig:error} the error \eqref{computable error} and the error estimator \eqref{actual error estimator}
in terms of the square root of the total number of degrees of freedom, for the case with exact solution $u_4$.
We can appreciate the exponential convergence of the method, which resembles that of the $\p$ version of VEM when approximating analytic solutions \cite{hpVEMbasic}.
\begin{figure}  [h]
\centering
\subfigure {\includegraphics [draft=false, angle=0, width=0.48\textwidth]{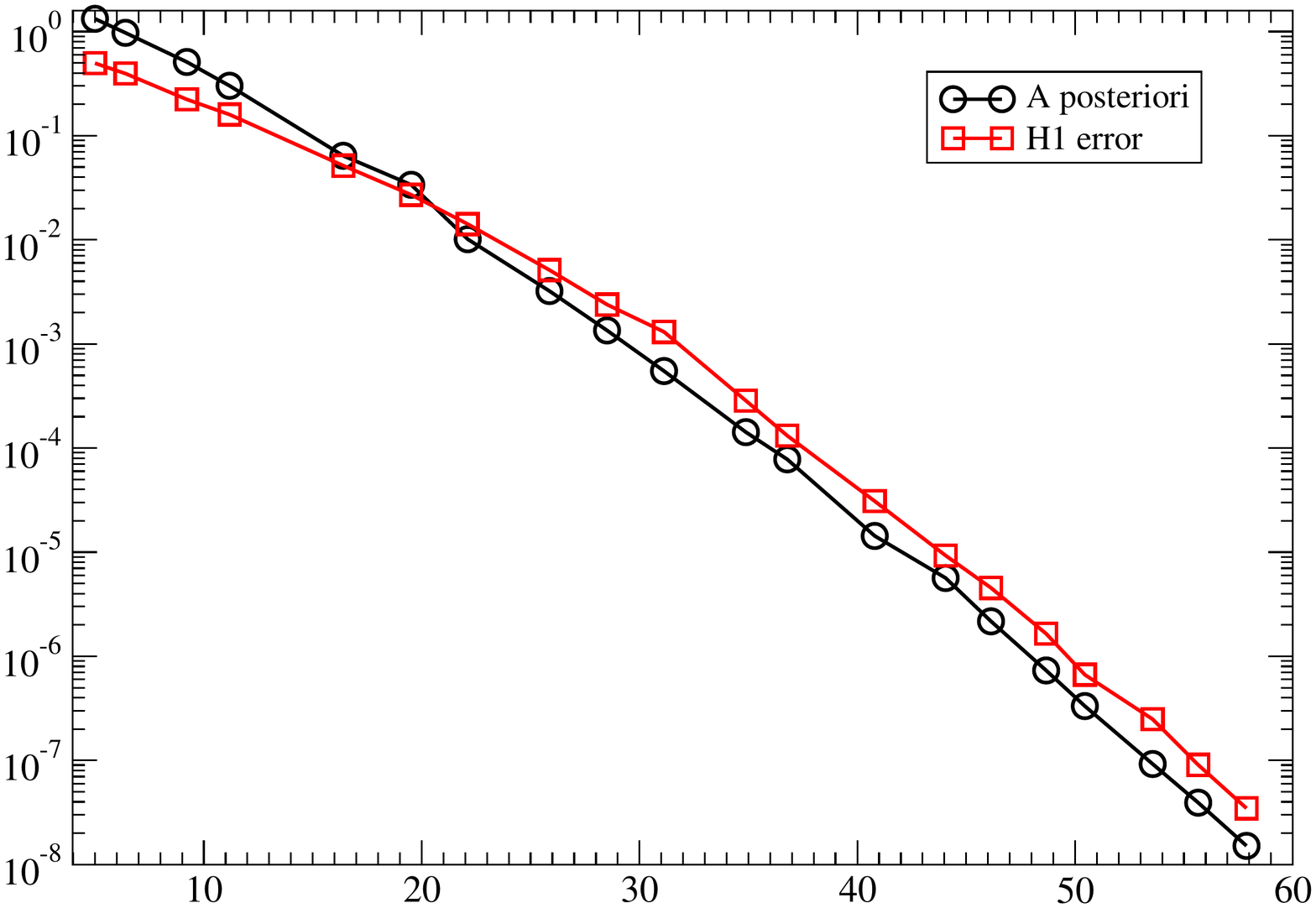} \put(-107,10){$\sqrt{\#\text{dofs}}$}}
\subfigure {\includegraphics [draft=false, angle=0, width=0.48\textwidth]{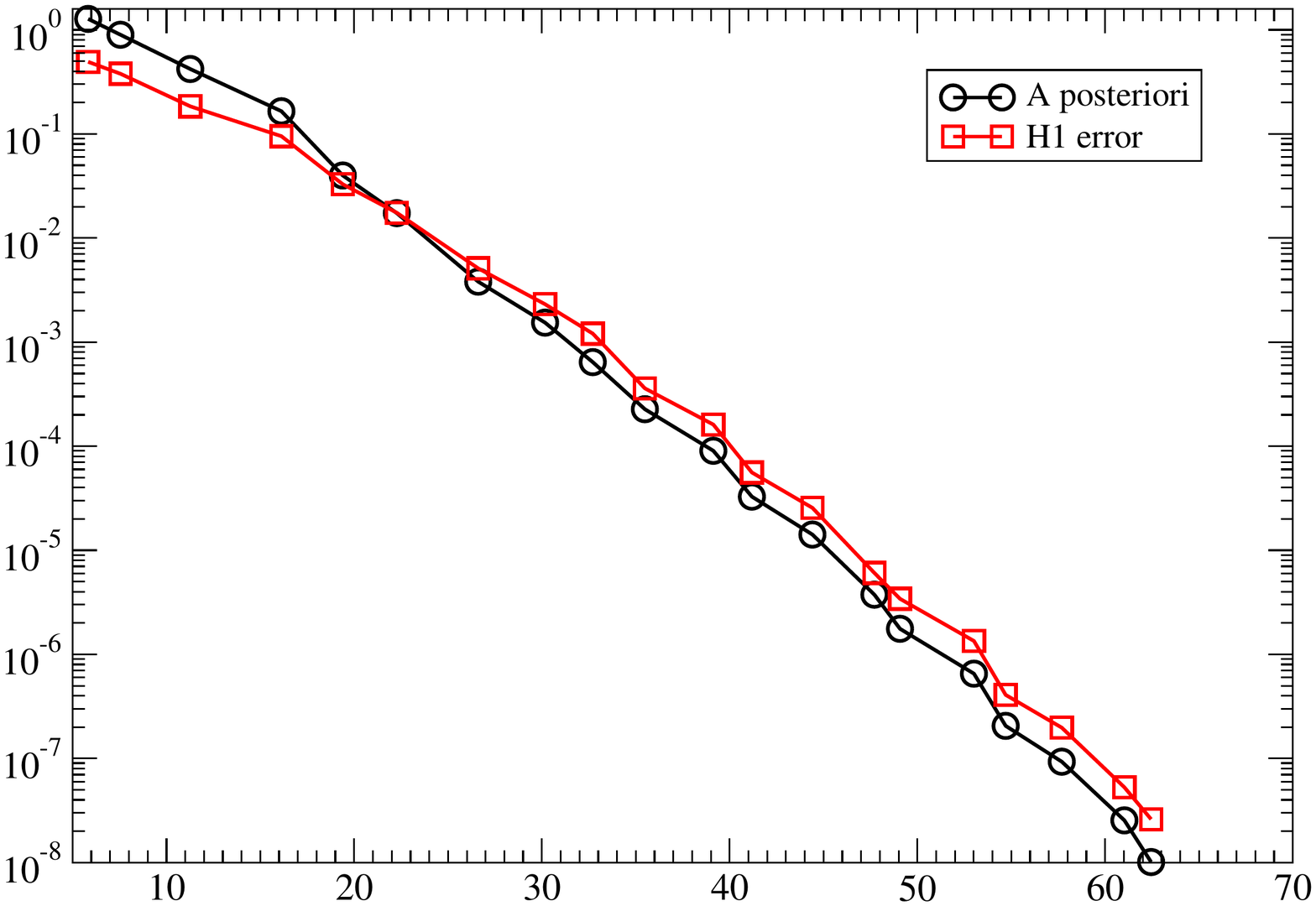} \put(-107,10){$\sqrt{\#\text{dofs}}$}}
\caption{Error decay of the computable error \eqref{computable error} and of the error estimator \eqref{actual error estimator} in terms of the square root of the degrees of freedom
employing the $\h\p$ adaptive refinement strategy in Algorithm \ref{algorithm hp refinement strategy}. Exact solution: $u_4$ introduced in \eqref{u4} on Cartesian (left) and Voronoi (right) meshes.}
\label{new-fig:error}
\end{figure}

Finally, we concentrate on the singular solution case $u_3$. We aim to investigate whether Algorithm \ref{algorithm hp refinement strategy} leads to a decay of the error which is exponential in terms of the cubic root of the overall degrees of freedom also in this more complex situation. This is indeed what one expects from the a priori error analysis of $\h\p$ FEM \cite{SchwabpandhpFEM} and of $\h\p$ VEM \cite{hpVEMcorner},
employing graded meshes towards the singularity and increasing elementwise the degrees of accuracy.
In Figure \ref{figure check decay error}, we plot the computable error \eqref{computable error} and the error estimator \eqref{actual error estimator}. We observe that, after some adaptive refinement steps, the decay gets the desired exponential slope.
\begin{figure}  [h]
\centering
\subfigure {\includegraphics [draft=false, angle=0, width=0.48\textwidth]{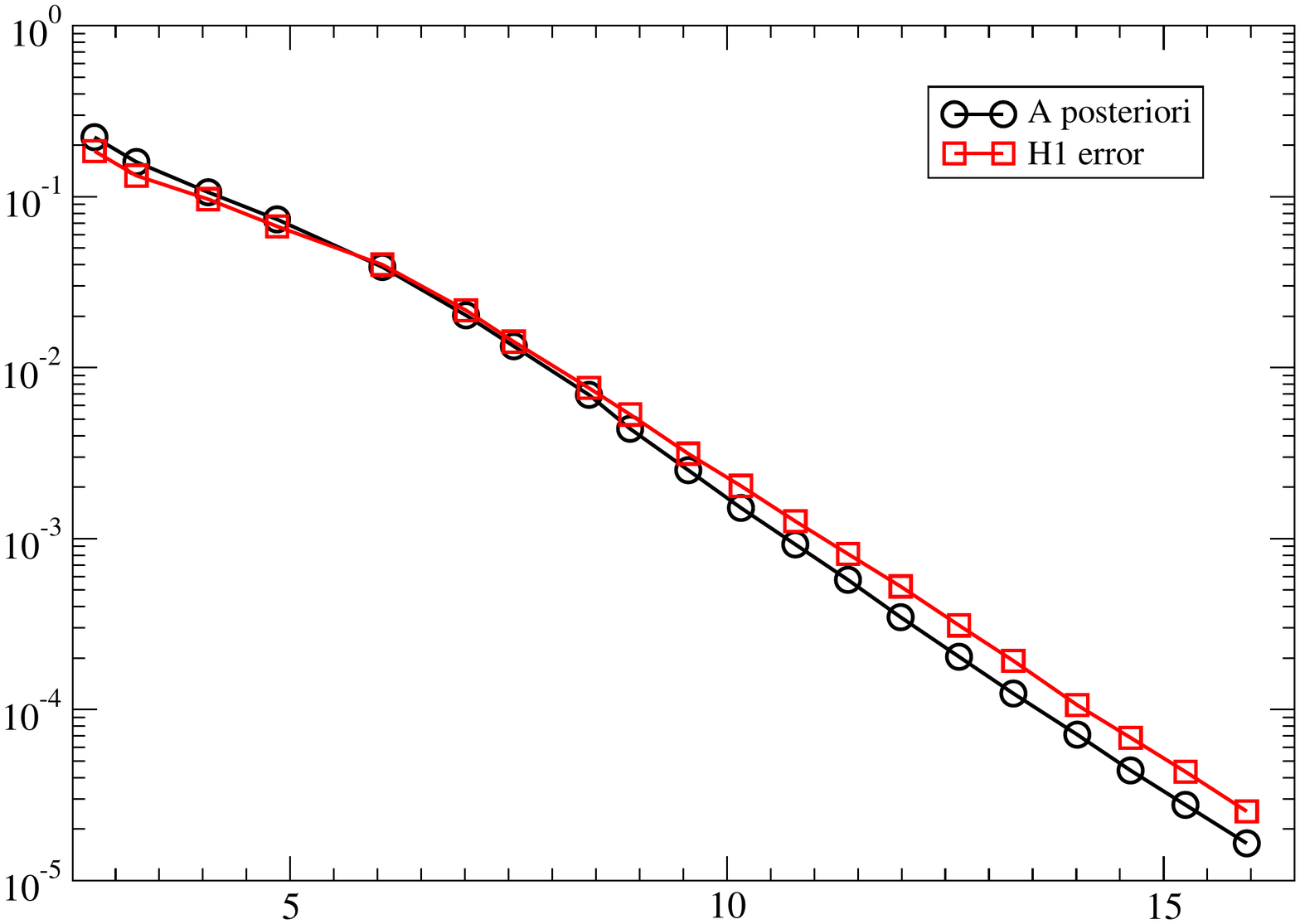} \put(-114,10){$\sqrt[3]{\#\text{dofs}}$}}
\subfigure {\includegraphics [draft=false, angle=0, width=0.48\textwidth]{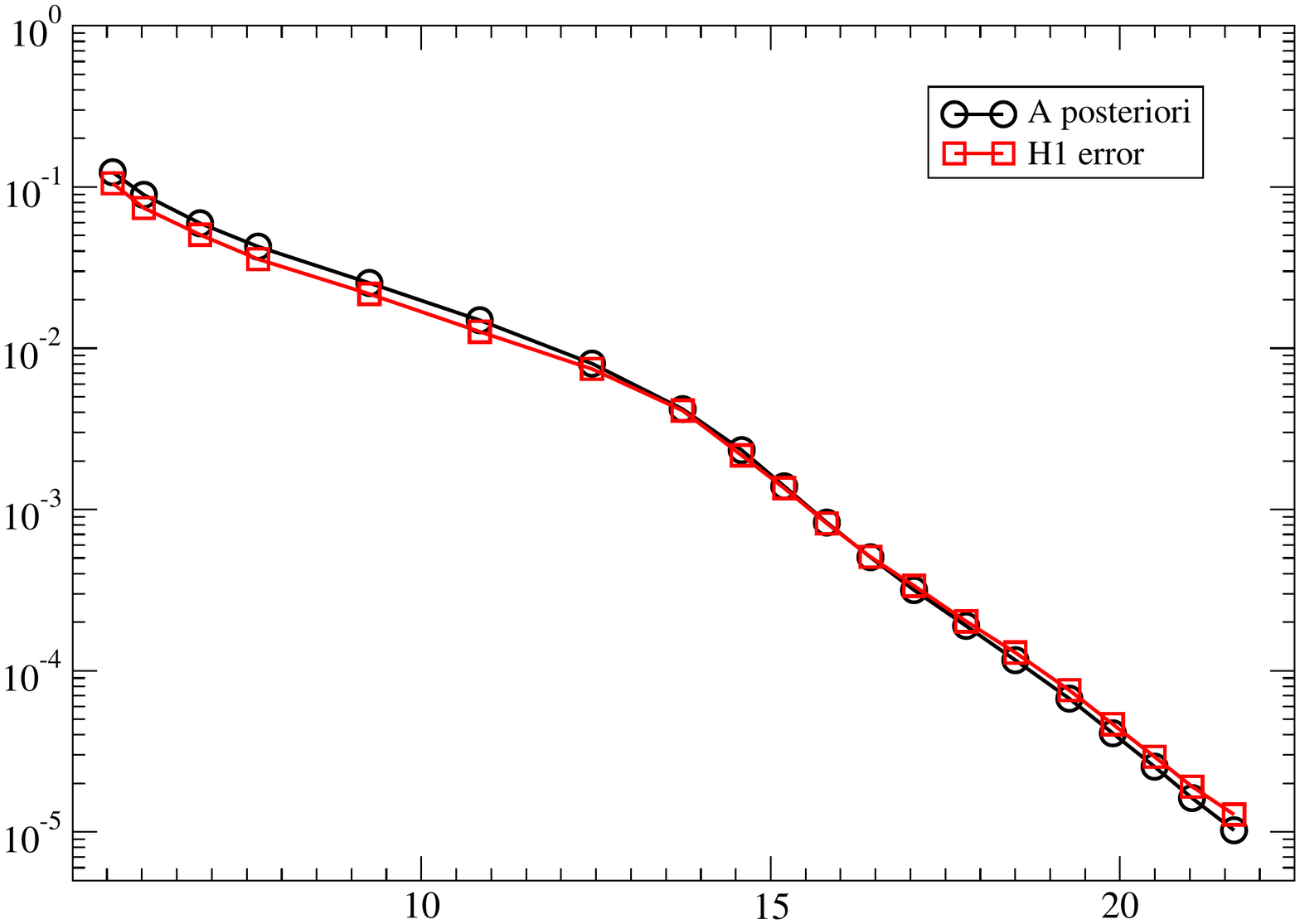} \put(-114,10){$\sqrt[3]{\#\text{dofs}}$}}
\caption{Error decay of the computable error \eqref{computable error} and of the error estimator \eqref{actual error estimator} in terms of the cubic root of the degrees of freedom
employing the $\h\p$ adaptive refinement strategy in Algorithm \ref{algorithm hp refinement strategy}. Exact solution: $u_3$ introduced in \eqref{3 solutions bonta} on Cartesian (left) and Voronoi (right) meshes.}
\label{figure check decay error}
\end{figure}

As a final experiment, we compare in Figure \ref{figure hp VS h} the performances of the $\h\p$ adaptive refinement strategy of Algorithm \ref{algorithm hp refinement strategy}, with a pure $\h$ adaptive refinement strategy, which is equivalent to the scheme presented in Algorithm \ref{algorithm hp refinement strategy} setting $\eta^2_{pred,\E, n} = 0$ for all $\E \in \taun$ and for all $n\in \mathbb N$.
\begin{figure}  [h]
\centering
\subfigure {\includegraphics [draft=false, angle=0, width=0.48\textwidth]{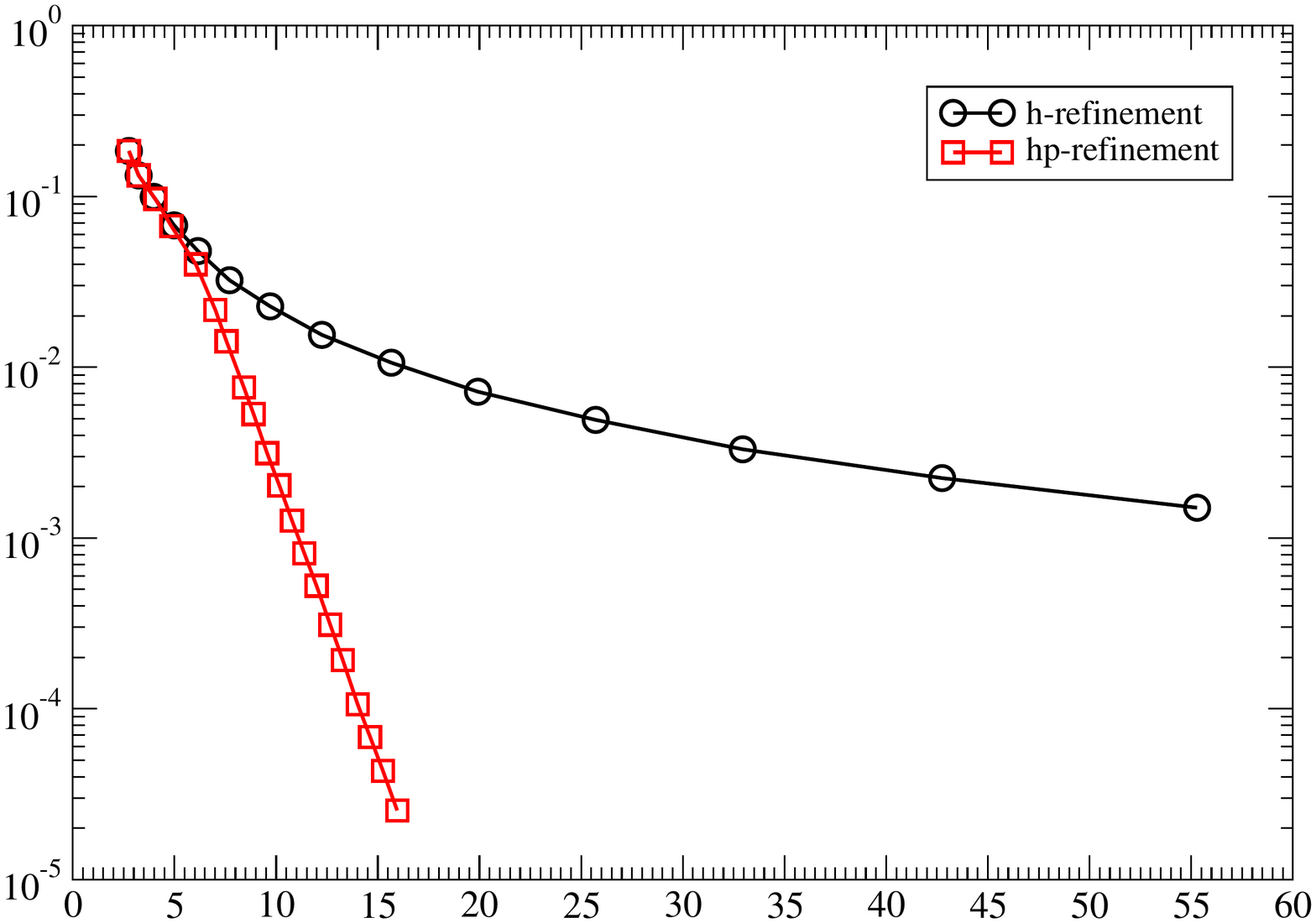} \put(-117,10){$\sqrt[3]{\#\text{dofs}}$}}
\subfigure {\includegraphics [draft=false, angle=0, width=0.48\textwidth]{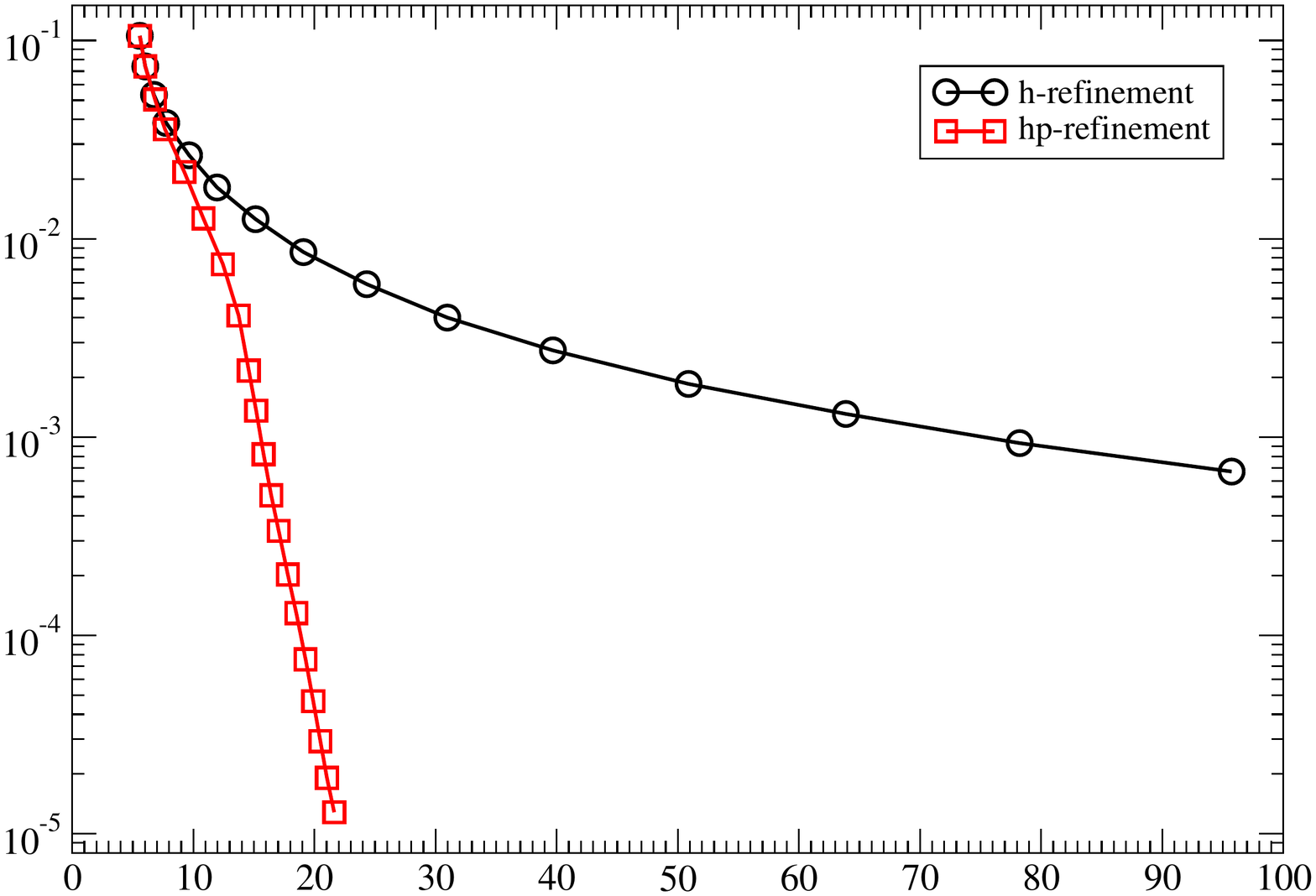} \put(-117,10){$\sqrt[3]{\#\text{dofs}}$}}
\caption{Comparison between the $\h\p$ and $\h$ adaptive refinement in Algorithm \ref{algorithm hp refinement strategy}.
Exact solution: $u_3$ introduced in \eqref{3 solutions bonta} on Cartesian (left) and Voronoi (right) meshes. On the $x-$axis, we depict the cubic root of the number of degrees of freedom.}
\label{figure hp VS h}
\end{figure}

From Figure~\ref{figure hp VS h}, it is evident that the~$\h\p$ adaptive strategy overperforms the~$\h$ refinement strategy, since, on the portions of the domain where the solution is sufficiently smooth,
the~$\p$ version is able to capture with very few degrees of freedom the same accuracy of an~$\h$ version with many more degrees of freedom.
It is however worth to underline that this comes at the price of having, for high~$\p$, a more densely populated stiffness matrix.

\section*{Acknowledgement}
L. B. d. V was partially supported by the European Research Council through the H2020 Consolidator Grant (grant no. 681162) CAVE, Challenges and Advancements in Virtual Elements. This support is gratefully acknowledged.
G. M. was supported by the Laboratory Directed Research and Development Program (LDRD), U.S. Department of Energy Office of Science, Office of Fusion Energy Sciences, and the DOE Office of Science Advanced Scientific Computing Research (ASCR) Program in Applied Mathematics Research,
under the auspices of the National Nuclear Security Administration of the U.S. Department of Energy by Los Alamos National Laboratory, operated by Los Alamos National Security LLC under contract DE-AC52-06NA25396.
L. M. has been funded by the Austrian Science Fund (FWF) through the project F 65.

%%%%%%%%%%%%%%%%%%%%%%%%%%%%%%%%%%%%%%%%%%%%%%%%%%%%%%%%%%%%%%%%%%%%%%%%%%%
%BIBLIOGRAPHY
{\footnotesize
\bibliography{bibliogr}

\begin{thebibliography}{10}

\bibitem{adamsfournier}
R.~A. Adams and J.~J.~F. Fournier.
\newblock {\em Sobolev {S}paces}, volume 140.
\newblock Academic {P}ress, 2003.

\bibitem{equivalentprojectorsforVEM}
B.~Ahmad, A.~Alsaedi, F.~Brezzi, L.D. Marini, and A.~Russo.
\newblock {E}quivalent projectors for virtual element methods.
\newblock {\em Comput. Math. Appl.}, 66(3):376--391, 2013.

\bibitem{ainsworth1992procedure}
M.~Ainsworth and J.~T. Oden.
\newblock A procedure for a posteriori error estimation for $hp$ finite element
  methods.
\newblock {\em Comput. Methods Appl. Mech. Engrg.}, 101(1-3):73--96, 1992.

\bibitem{AMV2018fully}
P.~F. Antonietti, G.~Manzini, and M.~Verani.
\newblock The fully nonconforming virtual element method for biharmonic
  problems.
\newblock {\em Math. Models Methods Appl. Sci.}, 28(02):387--407, 2018.

\bibitem{pVEMmultigrid}
P.~F. Antonietti, L.~Mascotto, and M.~Verani.
\newblock A multigrid algorithm for the $p$-version of the virtual element
  method.
\newblock {\em ESAIM Math. Model. Numer. Anal.}, 52(1):337--364, 2018.

\bibitem{nonconformingVEMbasic}
B.~Ayuso, K.~Lipnikov, and G.~Manzini.
\newblock The nonconforming virtual element method.
\newblock {\em ESAIM Math. Model. Numer. Anal.}, 50(3):879--904, 2016.

\bibitem{BabuskaMelenk_PUMintro}
I.~Babu{\v{s}}ka and J.~M. Melenk.
\newblock The partition of unity finite element method: basic theory and
  applications.
\newblock {\em Comput. Methods Appl. Mech. Engrg.}, 139(1-4):289--314, 1996.

\bibitem{VEMvolley}
L.~Beir{\~a}o~da Veiga, F.~Brezzi, A.~Cangiani, G.~Manzini, L.D. Marini, and
  A.~Russo.
\newblock Basic principles of virtual element methods.
\newblock {\em Math. Models Methods Appl. Sci.}, 23(01):199--214, 2013.

\bibitem{VEMelasticity}
L.~Beir{\~a}o~da Veiga, F.~Brezzi, and L.D. Marini.
\newblock Virtual elements for linear elasticity problems.
\newblock {\em SIAM J. Numer. Anal.}, 51:794--812, 2013.

\bibitem{hitchhikersguideVEM}
L.~Beir{\~a}o~da Veiga, F.~Brezzi, L.D. Marini, and A.~Russo.
\newblock The hitchhiker's guide to the virtual element method.
\newblock {\em Math. Models Methods Appl. Sci.}, 24(8):1541--1573, 2014.

\bibitem{hpVEMbasic}
L.~Beir{\~a}o~da Veiga, A.~Chernov, L.~Mascotto, and A.~Russo.
\newblock Basic principles of $hp$ virtual elements on quasiuniform meshes.
\newblock {\em Math. Models Methods Appl. Sci.}, 26(8):1567--1598, 2016.

\bibitem{hpVEMcorner}
L.~Beir{\~a}o~da Veiga, A.~Chernov, L.~Mascotto, and A.~Russo.
\newblock Exponential convergence of the $hp$ virtual element method with
  corner singularity.
\newblock {\em Numer. Math.}, 138:581--613, 2018.

\bibitem{VEM3Dbasic}
L.~Beir{\~a}o~da Veiga, F.~Dassi, and A.~Russo.
\newblock High-order virtual element method on polyhedral meshes.
\newblock {\em Comput. Math. Appl.}, 74:1110--1122, 2017.

\bibitem{BLM_MFD}
L.~Beir{\~a}o~da Veiga, K.~Lipnikov, and G.~Manzini.
\newblock {\em The {M}imetic {F}inite {D}ifference {M}ethod for elliptic
  problems}, volume~11.
\newblock Springer, 2014.

\bibitem{beiraolovadinarusso_stabilityVEM}
L.~Beir{\~a}o~da Veiga, C.~Lovadina, and A.~Russo.
\newblock Stability analysis for the virtual element method.
\newblock {\em Math. Models Methods Appl. Sci.}, 27(13):2557--2594, 2017.

\bibitem{BLV_StokesVEMdivergencefree}
L.~Beir{\~a}o~da Veiga, C.~Lovadina, and G.~Vacca.
\newblock Divergence free virtual elements for the {S}tokes problem on
  polygonal meshes.
\newblock {\em ESAIM Math. Model. Numer. Anal.}, 51(2):509--535, 2017.

\bibitem{ManziniBeirao_VEMresidualaposteriori}
L.~Beir{\~a}o~da Veiga and G.~Manzini.
\newblock Residual a posteriori error estimation for the virtual element method
  for elliptic problems.
\newblock {\em ESAIM Math. Model. Numer. Anal.}, 49(2):577--599, 2015.

\bibitem{Benedetto-VEM-2}
M.~F. Benedetto, S.~Berrone, A.~Borio, S.~Pieraccini, and S.~Scial{\`o}.
\newblock A hybrid mortar virtual element method for discrete fracture network
  simulations.
\newblock {\em J. Comput. Phys.}, 306:148 -- 166, 2016.

\bibitem{bernardi2001error}
C.~Bernardi, N.~Fi{\'e}tier, and R.~G. Owens.
\newblock An error indicator for mortar element solutions to the {S}tokes
  problem.
\newblock {\em IMA J. Numer. Anal.}, 21(4):857--886, 2001.

\bibitem{berrone2017residual}
S.~Berrone and A.~Borio.
\newblock A residual a posteriori error estimate for the virtual element
  method.
\newblock {\em Math. Models Methods Appl. Sci.}, 27(08):1423--1458, 2017.

\bibitem{VEM_DD_basic}
S.~Bertoluzza, M.~Pennacchio, and D.~Prada.
\newblock {BDDC} and {FETI-DP} for the virtual element method.
\newblock {\em Calcolo}, 54(4):1565--1593, 2017.

\bibitem{brennerVEM}
S.~C. Brenner, Q.~Guan, and L.-Y. Sung.
\newblock Some estimates for virtual element methods.
\newblock {\em Comput. Methods Appl. Math.}, 17(4):553--574, 2017.

\bibitem{BrennerScott}
S.~C. Brenner and L.~R. Scott.
\newblock {\em The mathematical theory of {F}inite {E}lement {M}ethods},
  volume~15.
\newblock Texts in Applied Mathematics, Springer-Verlag, New York, third
  edition, 2008.

\bibitem{Gatica-1}
E.~C{\'a}ceres and G.~N. Gatica.
\newblock A mixed virtual element method for the pseudostress-velocity
  formulation of the {S}tokes problem.
\newblock {\em IMA J. Numer. Anal.}, 37(1):296--331, 2017.

\bibitem{hpDGaposTime}
A.~Cangiani, E.~H. Georgoulis, S.~Giani, and S.~Metcalfe.
\newblock $hp$--adaptive discontinuous {G}alerkin methods for non-stationary
  convection-diffusion problems.
\newblock \url{https://doi.org/10.1016/j.camwa.2019.04.002}, 2019.

\bibitem{cangianigeorgulispryersutton_VEMaposteriori}
A.~Cangiani, E.~H. Georgoulis, T.~Pryer, and O.~J. Sutton.
\newblock A posteriori error estimates for the virtual element method.
\newblock {\em Numer. Math.}, 137:857--893, 2017.

\bibitem{mixedVEMapos}
A.~Cangiani and M.~Munar.
\newblock A posteriori error estimates for mixed virtual element methods.
\newblock \url{https://arxiv.org/abs/1904.10054}, 2019.

\bibitem{canuto2017convergence}
C.~Canuto, R.~H. Nochetto, R.~Stevenson, and M.~Verani.
\newblock Convergence and optimality of $hp$-{AFEM}.
\newblock {\em Numer. Math.}, 135(4):1073--1119, 2017.

\bibitem{conformingHarmonicVEM}
A.~Chernov and L.~Mascotto.
\newblock The harmonic virtual element method: stabilization and exponential
  convergence for the {L}aplace problem on polygonal domains.
\newblock {\em IMA J. Numer. Anal.}, 2018.
\newblock doi: \url{https://doi.org/10.1093/imanum/dry038}.

\bibitem{cockburn_HDG}
B.~Cockburn, J.~Gopalakrishnan, and R.~Lazarov.
\newblock Unified hybridization of discontinuous {G}alerkin, mixed, and
  continuous {G}alerkin methods for second order elliptic problems.
\newblock {\em SIAM J. Numer. Anal.}, 47(2):1319--1365, 2009.

\bibitem{fetishVEM3D}
F.~Dassi and L.~Mascotto.
\newblock Exploring high-order three dimensional virtual elements: bases and
  stabilizations.
\newblock {\em Comput. Math. Appl.}, 75(9):3379--3401, 2018.

\bibitem{dipietroErn_hho}
D.~A. Di~Pietro and A.~Ern.
\newblock Hybrid high-order methods for variable-diffusion problems on general
  meshes.
\newblock {\em C. R. Math. Acad. Sci. Paris}, 353(1):31--34, 2015.

\bibitem{dolejsi2016hp}
V.~Dolejsi, A.~Ern, and M.~Vohral{\'\i}k.
\newblock $hp$--adaptation driven by polynomial-degree-robust a posteriori
  error estimates for elliptic problems.
\newblock {\em SIAM J. Sci. Comput.}, 38(5):A3220--A3246, 2016.

\bibitem{evansPDE}
L.~C. Evans.
\newblock {\em Partial {D}ifferential {E}quations}.
\newblock American {M}athematical {S}ociety, 2010.

\bibitem{gain2014virtual}
A.L. Gain, C.~Talischi, and G.H. Paulino.
\newblock On the virtual element method for three-dimensional elasticity
  problems on arbitrary polyhedral meshes.
\newblock {\em Comput. Methods Appl. Mech. Engrg.}, 282:132--160, 2014.

\bibitem{HoustonSuli2005note}
P.~Houston and E.~S{\"u}li.
\newblock A note on the design of $hp$-adaptive finite element methods for
  elliptic partial differential equations.
\newblock {\em Comput. Methods Appl. Mech. Engrg.}, 194(2-5):229--243, 2005.

\bibitem{lipnikov2014mimetic}
K.~Lipnikov, G.~Manzini, and M.~Shashkov.
\newblock Mimetic finite difference method.
\newblock {\em J. Comput. Phys.}, 257:1163--1227, 2014.

\bibitem{MascottoPhDthesis}
L.~Mascotto.
\newblock {\em The $hp$ version of the Virtual Element Method}.
\newblock PhD thesis, 2018.

\bibitem{fetishVEM}
L.~Mascotto.
\newblock Ill-conditioning in the virtual element method: stabilizations and
  bases.
\newblock {\em Numer. Methods Partial Differential Equations},
  34(4):1258--1281, 2018.

\bibitem{ncHVEM}
L.~Mascotto, I.~Perugia, and A.~Pichler.
\newblock Non-conforming harmonic virtual element method: $h$- and
  $p$-versions.
\newblock {\em J. Sci. Comput.}, 77(3):1874--1908, 2018.

\bibitem{melenk2003hp}
J.~M. Melenk.
\newblock $hp$--interpolation of non--smooth functions.
\newblock {\em SIAM J. Numer. Anal.}, 43:127--155, 2005.

\bibitem{MelenkWohlmuth_hpFEMaposteriori}
J.~M. Melenk and B.~I. Wohlmuth.
\newblock On residual-based a posteriori error estimation in $hp$-{FEM}.
\newblock {\em Adv. Comput. Math.}, 15(1-4):311--331, 2001.

\bibitem{talischi2010polygonal}
I.~F.~M. Menezes, G.~H. Paulino, A.~Pereira, and C.~Talischi.
\newblock Polygonal finite elements for topology optimization: A unifying
  paradigm.
\newblock {\em Internat. J. Numer. Methods Engrg.}, 82(6):671--698, 2010.

\bibitem{mitchell2011survey}
W.~F. Mitchell and M.~A. McClain.
\newblock A survey of $hp$-adaptive strategies for elliptic partial
  differential equations.
\newblock In {\em Recent advances in computational and applied mathematics},
  pages 227--258. Springer, 2011.

\bibitem{VEMchileans}
D.~Mora, G.~Rivera, and R.~Rodr{\'i}guez.
\newblock A virtual element method for the {S}teklov eigenvalue problem.
\newblock {\em Math. Models Methods Appl. Sci.}, 25(08):1421--1445, 2015.

\bibitem{MoraRiveraRodriguez_aposSteklov}
D.~Mora, G.~Rivera, and R.~Rodriguez.
\newblock A posteriori error estimates for a virtual elements method for the
  {S}teklov eigenvalue problem.
\newblock {\em Comput. Math. Appl.}, 74:2172--2190, 2017.

\bibitem{Helmholtz-VEM}
I.~Perugia, P.~Pietra, and A.~Russo.
\newblock A plane wave virtual element method for the {H}elmholtz problem.
\newblock {\em ESAIM Math. Model. Numer. Anal.}, 50(3):783--808, 2016.

\bibitem{Weisser_basic}
S.~Rjasanow and S.~Wei{\ss}er.
\newblock Higher order {BEM}-based {FEM} on polygonal meshes.
\newblock {\em SIAM J. Numer. Anal.}, 50(5):2357--2378, 2012.

\bibitem{SchwabpandhpFEM}
C.~Schwab.
\newblock {\em $p$ -and $hp$- {F}inite {E}lement {M}ethods: Theory and
  Applications in Solid and Fluid Mechanics}.
\newblock Clarendon Press Oxford, 1998.

\bibitem{SukumarTabarraeipolygonalintroduction}
N.~Sukumar and A.~Tabarraei.
\newblock Conforming polygonal finite elements.
\newblock {\em Internat. J. Numer. Methods Engrg.}, 61:2045--2066, 2004.

\bibitem{Triebel}
H.~Triebel.
\newblock {\em Interpolation theory, function spaces, differential operators}.
\newblock North-Holland, 1978.

\bibitem{vaccaBrink}
G.~Vacca.
\newblock An ${H}^1$--conforming virtual element for {D}arcy and {B}rinkman
  equations.
\newblock {\em Math. Models Methods Appl. Sci.}, 28(01):159--194, 2018.

\bibitem{Wriggers-contact}
P.~Wriggers, W.~T. Rust, and B.~D. Reddy.
\newblock A virtual element method for contact.
\newblock {\em Comput. Mech.}, 58(6):1039--1050, 2016.

\end{thebibliography}
}
\bibliographystyle{plain}
%%%%%%%%%%%%%%%%%%%%%%%%%%%%%%%%%%%%%%%%%%%%%%%%%%%%%%%%%%%%%%%%%%%%%%%%%%%%

\end{document}